\documentclass{amsart}
\usepackage{stmaryrd}
\usepackage{silence}
\usepackage{upgreek}
\usepackage{amssymb}
\usepackage{faktor}
\usepackage{mathrsfs}
\usepackage{tikz}
\usepackage{tikz-cd}
\usepackage{mathtools}
\usepackage[mathcal]{euscript}
\usepackage{graphicx}
\usepackage{url}
\usepackage{hyperref}
\usepackage[T1]{fontenc}
\usepackage{subfigure}

\usetikzlibrary{shapes.geometric}
\usetikzlibrary{decorations.markings}
\usetikzlibrary{patterns,decorations.pathreplacing}

\hypersetup{colorlinks=true,linkcolor=blue}

\pagestyle{plain}
\usepackage{ragged2e}

\newcommand{\intrinsic}[1]{S(#1)}

\newcommand{\fullyreduced}[1]{\widetilde{S}\left(#1\right)}

\newcommand{\sgn}{\mathrm{sgn}}

\definecolor{coloryellow}{RGB}{240,228,66}
\definecolor{colorskyblue}{RGB}{86,180,233}
\definecolor{colorvermillion}{RGB}{213,94,0}

\newcommand{\graphfont}{\mathsf}

\newcommand{\thetagraph}[1]{\graphfont{\Theta}_{#1}}
\newcommand{\completegraph}[1]{\graphfont{K}_{#1}}

\newcommand{\stargraph}[1]{\graphfont{S}_{#1}}
\newcommand{\graf}{\graphfont{\Gamma}}

\newcommand{\lollipopgraph}[1]{\graphfont{L}_{#1}}

\newcommand{\angelgraph}{\graphfont{\Omega}}








\DeclareSymbolFont{sfletters}{OT1}{cmss}{m}{n}
\DeclareMathSymbol{\sTheta}{\mathord}{sfletters}{"02}



\theoremstyle{definition}
\newtheorem{definition}{Definition}[section]

\newtheorem{notation}[definition]{Notation}

\newtheorem{example}[definition]{Example}

\newtheorem{construction}[definition]{Construction}
\newtheorem{observation}[definition]{Observation}

\theoremstyle{plain}
\newtheorem{proposition}[definition]{Proposition}
\newtheorem{lemma}[definition]{Lemma}
\newtheorem{corollary}[definition]{Corollary}
\newtheorem{theorem}[definition]{Theorem}
\newtheorem{conjecture}[definition]{Conjecture}

\theoremstyle{remark}
\newtheorem{remark}[definition]{Remark}

\makeatletter
\@addtoreset{definition}{section}
\makeatother

\usepackage{marginnote}
    \DeclareFontFamily{U}{wncy}{}
    \DeclareFontShape{U}{wncy}{m}{n}{<->wncyr10}{}
    \DeclareSymbolFont{mcy}{U}{wncy}{m}{n}
    \DeclareMathSymbol{\Sha}{\mathord}{mcy}{"58}

\newsavebox{\foobox}

\title{On the second homology of planar graph braid groups}

\author{Byung Hee An}
\email{anbyhee@knu.ac.kr}
\address{Department of Mathematics Education, Teachers College, Kyungpook National University, Daegu, South Korea}
\author{Ben Knudsen}
\email{b.knudsen@northeastern.edu}
\address{Department of Mathematics, Northeastern University, Boston, MA 02115, USA}

\begin{document}
\begin{abstract}
We show that the second homology of the configuration spaces of a planar graph is generated under the operations of embedding, disjoint union, and edge stabilization by three atomic graphs: the cycle graph with one edge, the star graph with three edges, and the theta graph with four edges. We give an example of a non-planar graph for which this statement is false.
\end{abstract}

\maketitle

\section{Introduction}

A fundamental dichotomy, observed since the dawn of the study of configuration spaces, is that of stability and instability \cite{Arnold:CRGDB,McDuff:CSPNP,Church:HSCSM,ChurchEllenbergFarb:FIMSRSG}. Writing $B_k(X)$ for the $k$th unordered configuration space of the topological space $X$, a phenomenon is said to be \emph{stable} if it occurs for all sufficiently large $k$, or perhaps in the limit as $k\to \infty$. Stable phenomena tend to be structured and calculable, unstable phenomena fleeting and irregular, hence more difficult to grasp.

This paper is concerned with the unstable homology of configuration spaces of graphs, or equivalently of their fundamental groups, the graph braid groups \cite{Abrams:CSBGG}. This investigation is a companion and counterpoint to the recent complete calculation of the stable homology \cite{AnDrummondColeKnudsen:AHGBG}, premised on the action by \emph{edge stabilization} of the polynomial ring generated by the edges of the graph $\graf$ on $H_*(B(\graf))$, where $B(\graf):=\bigsqcup_{k\geq0}B_k(\graf)$ \cite{AnDrummondColeKnudsen:ESHGBG}. 

Unstably, little systematic is known beyond the landmark calculation by Ko--Park of the first homology \cite{KoPark:CGBG}. One consequence of this calculation is that $H_1(B(\graf))$ is generated under edge stabilization by \emph{loop classes} and \emph{star classes}, geometric generators represented by maps from circles. Disjoint unions of stars and loops then give rise to higher degree classes represented by maps from tori.

As observed independently in \cite{ChettihLuetgehetmann:HCSTL} and \cite{WiltshireGordon:MCSSC}, the space $B_3(\thetagraph{4})$ has the homotopy type of a surface of genus $3$, whose fundamental class cannot be represented by a map from a torus---here, $\thetagraph{4}$ is the suspension of four points. Our main result is that, in the planar case, this \emph{theta class} is the only ``exotic'' generator in degree $2$.

\begin{theorem}\label{thm:main}
Let $\graf$ be a planar graph with set of edges $E$. The $\mathbb{Z}[E]$-module $H_2(B(\graf))$ is generated by toric classes and theta classes.
\end{theorem}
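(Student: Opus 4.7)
The plan is to present a chain-level model for $H_*(B(\graf))$ as a $\mathbb{Z}[E]$-module, identify $H_2$ generators by their essential support, and then use planarity to show that the only irreducible supports are those of the three atomic graphs. For the chain model I would use the standard discrete cellular chain complex for $B(\graf)$ (e.g.\ Abrams's cube complex or one of its discrete Morse reductions), in which the chain groups have a basis indexed by local combinatorial data at the vertices of $\graf$, the differential is concentrated at vertices, and the edge-stabilization action is transparent. This reduces the problem to classifying, modulo the $\mathbb{Z}[E]$-action, those $H_2$ classes supported on a subgraph $\graf'\subseteq\graf$ that is not obtainable by stabilizing a class on a strictly smaller subgraph.

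Next, I would reduce each such support to an \emph{essential} subgraph by smoothing interior valence-$2$ vertices and pruning leaves that do not carry the class. Any surviving generator is then supported on a subgraph whose interior vertices all have valence $\geq 3$. Using Mayer--Vietoris and K\"unneth across inclusions of connected components, together with the Ko--Park identification of $H_1(B(\graf))$ with loop and star classes, I recover the toric generators as products of lower-degree classes. The remaining candidates to classify are the connected essential supports whose contribution to $H_2$ is not of product type.

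The main step, and the principal obstacle, is to use planarity to control these connected irreducible supports. Here I would argue that a minimal planar counterexample $\graf'$ admits a planar embedding whose face structure supplies an explicit $2$-chain exhibiting any putative irreducible class as a sum of toric classes and theta classes pulled back from subgraphs; the rotation system at each interior vertex of $\graf'$ should enter essentially, e.g.\ by matching star-class contributions at a trivalent vertex to the boundary of an adjacent face. For non-planar $\graf'$ this construction must fail, consistent with the counterexample promised in the abstract, so planarity cannot be bypassed by a purely combinatorial classification of essential subgraphs. I expect the argument to reduce to a short list of minimal essential planar subgraphs whose $H_2$ can be computed directly, and to conclude that among these only $\thetagraph{4}$ supplies a new non-toric generator---the theta class.

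Finally, I would verify that the loop class of $\cyclegraph{1}$, the star class of $\stargraph{3}$, and the theta class of $\thetagraph{4}$ are atomic in the required sense: together with disjoint union, embedding, and edge stabilization they produce every $H_2$ generator identified above. The genuine novelty of the theta class---that it is not a toric class in disguise---is guaranteed by the identification of $B_3(\thetagraph{4})$ with a closed orientable surface of genus $3$, whose fundamental class is not representable by any map from a torus.
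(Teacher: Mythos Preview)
Your proposal is not a proof but an outline, and its key step---the one where planarity is actually used---is not filled in. You write that ``a minimal planar counterexample $\graf'$ admits a planar embedding whose face structure supplies an explicit $2$-chain exhibiting any putative irreducible class as a sum of toric classes and theta classes,'' but you give no indication of how such a $2$-chain would be constructed, nor why the rotation system at vertices would suffice. The notion of ``minimal essential planar subgraph'' supporting an irreducible $H_2$ class is not obviously well-defined or finite: there is no a priori bound on the number of essential vertices or the first Betti number of a support, so the promised ``short list'' whose $H_2$ can be computed directly does not materialize. Without a concrete inductive parameter that strictly decreases, the argument has no spine.

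The paper's approach is quite different in structure. It does not attempt to classify supports. Instead, it sets up an induction on the first Betti number of $\graf$ using the long exact sequence associated to \emph{vertex explosion} at a bivalent vertex $w$ (Proposition~\ref{prop:vertex explosion}), which reduces the problem to showing that $M(\graf)$ surjects onto the kernel of a certain connecting map $\delta$. This kernel is analyzed via explicit ``pesky'' generators (standard cycles satisfying four combinatorial conditions), and the argument then proceeds by two further nested inductions: first on the number of $1$-cuts of $\graf_e$ to reduce to the biconnected case, then on a Cunningham--Edmonds complexity parameter $N_2$ to reduce to the triconnected case. Planarity enters only at the triconnected base case, and there not via faces but via a forbidden-minor argument: a loop cycle failing the desired property would force an embedded $\completegraph{3,3}$ (the ``angel graph'' analysis of \S\ref{section:angelic graphs}). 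Your proposal contains none of this machinery, and the vague appeal to face structure does not substitute for it.
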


Precise descriptions of the action of $\mathbb{Z}[E]$ and of the classes in question can be found in Sections \ref{section:preliminaries} and \ref{section:generators and relations}, respectively. As illustrated by Example \ref{example:non-planar}, the assumption of planarity cannot be removed.

\subsection{Questions} Our work invites the following questions, which we hope to pursue in the future.\\

\begin{enumerate}
\item \emph{Universal relations}. Star classes and loop classes are universal generators for the first homology of graph braid groups, and Theorem \ref{thm:main} provides universal generators for the second homology in the planar case. In degree $1$, a complete set of universal relations is known. What are the relations in degree $2$?\\
\item \emph{Combinatorial bases}. Ko--Park give a basis for $H_1(B(\graf))$ in terms of combinatorial invariants of $\graf$ \cite[Thm. 3.16]{KoPark:CGBG}. Can a similar basis be given for $H_2(B(\graf))$?\\
\item \emph{Higher degrees}. We conjecture that a direct analogue of Theorem \ref{thm:main} holds in all degrees.
\begin{conjecture}\label{conj:main}
For any planar graph $\graf$ and $i>0$, the $\mathbb{Z}[E]$-module $H_i(B(\graf))$ is generated by classes arising from disjoint unions of cycle graphs, star graphs, and theta graphs.
\end{conjecture}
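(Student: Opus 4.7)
The plan is to induct on homological degree $i$, using Theorem \ref{thm:main} as the base case $i=2$ and the Ko--Park calculation as $i=1$. For the inductive step I would work with the Swi\k{a}tkowski chain model for $B(\graf)$, whose generators are indexed by combinatorial data at essential vertices and whose differential is local. Because edge stabilization acts explicitly on this complex, the task reduces to exhibiting, for each cycle in degree $i$, a representative that lies---possibly after multiplication by monomials in the edge variables---in the image of a K\"unneth map from disjoint unions of cycle, star, and theta graphs.

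A natural organizational principle is to decompose classes by the support of a minimal chain-level representative. Classes supported on a disconnected subgraph factor through an external product of homologies of smaller configuration spaces; by induction on the total number of essential vertices, each factor is already generated by classes of the desired form, and since the K\"unneth map is $\mathbb{Z}[E]$-equivariant in the appropriate sense, so is the resulting product class. This reduces the problem to understanding connected supports, where planarity allows one to enumerate the local pictures at essential vertices by their cyclic orderings and to control how these local pictures glue.

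The main obstacle, and the heart of the conjecture, is ruling out genuinely new exotic atomic classes in degrees $i\geq3$. Here one must show that no connected planar subgraph contributes an indecomposable generator beyond those arising from an embedded cycle, star, or theta. I would begin with explicit computations on small test cases such as $B(\completegraph{4}\setminus e)$, higher-valence analogues of the theta graph, and planar pieces built around a single high-degree vertex, searching for a structural reduction. The hoped-for pattern is that every connected exotic contribution, after sufficient edge stabilization, decomposes across the faces of the planar embedding into pieces traceable to theta subgraphs, which suggests formulating a planar-dual refinement of the Decomposition Theorem that is compatible with the filtration by homological degree. Verifying that this decomposition survives integrally---without losing torsion information in the spectral sequence relating local and global homology---is likely to be the most delicate technical point.
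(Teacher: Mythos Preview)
The statement you are attempting to prove is Conjecture~\ref{conj:main}, which the paper explicitly lists as an open question; there is no proof in the paper to compare against. The paper establishes only the case $i=2$ (Theorem~\ref{thm:main}), and the authors themselves write that they ``hope to pursue'' the higher-degree statement in future work. So any proposal here must be judged as an attack on an open problem, not as a reconstruction of an existing argument.

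Viewed that way, your outline is not a proof but a research plan, and you say as much: you identify ``ruling out genuinely new exotic atomic classes in degrees $i\geq 3$'' as the main obstacle and then propose to begin with experiments and to search for a ``planar-dual refinement of the Decomposition Theorem.'' That is an honest assessment, but it means the proposal contains no mechanism for the inductive step. Two concrete difficulties are worth flagging. First, induction on $i$ is not obviously the right scaffold: the paper's proof of the $i=2$ case does not reduce to the $i=1$ case in any way that suggests a pattern; it instead inducts on the first Betti number of $\graf$ via vertex explosion (Proposition~\ref{prop:vertex explosion}), and the entire substance of the argument is a degree-specific analysis of the cokernel (the ``pesky cycles'' of Section~\ref{section:pesky cycles}) using connectivity and planarity of $\graf_e$. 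There is no evident leverage that knowing $H_{i-1}$ gives you over $H_i$. Second, your reduction of disconnected-support classes via K\"unneth is fine, but it is also the trivial part; the content of the conjecture is precisely that no connected planar subgraph produces an indecomposable class outside the three atomic families, and nothing in your outline addresses this beyond the hope that computations will reveal a pattern. Absent a candidate structural statement that would force such a decomposition, the proposal does not close the gap.
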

\item \emph{Non-planar graphs}. What extra generators are needed in order to remove the assumption of planarity from Theorem \ref{thm:main} and Conjecture \ref{conj:main}?
\end{enumerate}

\subsection{Strategy and outline} Writing $M(\graf)$ for the submodule spanned by toric classes and theta classes, the theorem is the equality $M(\graf)=H_2(B(\graf))$. This conclusion being well known for trees, we proceed by induction on the first Betti number of $\graf$. The tool facilitating this induction is the exact sequence of Proposition \ref{prop:vertex explosion}, originally introduced in \cite{AnDrummondColeKnudsen:SSGBG}, which expresses $H_2(B(\graf))$ as an extension by a submodule contained in $M(\graf)$ by induction. By exactness, proving the theorem becomes a matter of showing that $M(\graf)$ surjects onto the quotient by this submodule (Theorem \ref{thm:reformulation}), which is achieved through consideration of a special generating set for the cokernel (Proposition \ref{prop:cokernel generators}). Two inductions reduce the vanishing of this generating set to the triconnected case, which is handled by a combinatorial argument.

In linear order, Section \ref{section:preliminaries} is concerned with background material, Section \ref{section:generators and relations} discusses the geometric classes of interest, Section \ref{section:decomposing graphs} details decomposition tactics for the purposes of induction, Section \ref{section:pesky cycles} provides the key reformulation of Theorem \ref{thm:main} and the special generating set for the cokernel, Section \ref{section:examples and a first reduction} is concerned with examples and the first induction, Section \ref{section:base cases} deals with the triconnected case, and Section \ref{section:induction} carries out the second induction.

\subsection{Acknowledgements} This paper benefited substantially from conversations with Gabriel Drummond-Cole, who declined to be named as coauthor. The authors also thank Roberto Pagaria for a simplifying observation. The first author was supported by the National Research Foundation of Korea (NRF) grant funded by the Korea government (MSIT) (No. 2020R1A2C1A0100320). The second author was supported by NSF grant DMS 1906174.

\section{Preliminaries}\label{section:preliminaries}

This section presents a brief overview of some necessary background material in the study of the homology of graph braid groups. A more leisurely exposition is available in \cite{AnDrummondColeKnudsen:ESHGBG}.

\subsection{Conventions on graphs}

A graph is a finite CW complex of dimension at most $1$, whose $0$-cells and open $1$-cells are called vertices and edges, respectively. A graph is called a tree if it is contractible and a cycle if it is homeomorphic to $S^1$. A half-edge is a point in the preimage of a vertex under the attaching map of a $1$-cell;
thus, every edge determines two half-edges. In general, sets of vertices, edges, and half-edges are denoted $V(\graf)$, $E(\graf)$, and $H(\graf)$, respectively, but we omit $\graf$ from the notation wherever doing so causes no ambiguity.

A half-edge $h$ has an associated vertex $v(h)$ and an associated edge $e(h)$, and we write $H(v)=\{h\in H: v=v(h)\}$ for the set of half-edges incident on $v\in V$. The degree or valence of $v$ is $d(v)=|H(v)|$. A vertex is essential if its valence is at least $3$. An edge is a tail if its closure contains a vertex of valence $1$ and a self-loop if its closure contains only one vertex. A multiple edge is a set of edges incident on the same pair of vertices. 

A subgraph is a subcomplex of a graph. A graph morphism is a finite composition of isomorphisms onto subcomplexes and inverse subdivisions, which we call smoothings---see Figure \ref{figure:smoothing} and \cite[\S2.1]{AnDrummondColeKnudsen:ESHGBG}.

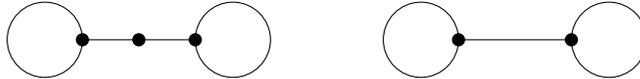
\begin{figure}[ht]
\begin{center}
\begin{tikzpicture}
\fill[black] (0,0) circle (2.5pt);
\fill[black] (1.5,0) circle (2.5pt);
\fill[black] (.75,0) circle (2.5pt);
\draw(0,0) -- (1.5,0);
\draw(-.5,0) circle (.5cm);
\draw(2,0) circle (.5cm);
\begin{scope}[xshift=5cm]
\fill[black] (0,0) circle (2.5pt);
\fill[black] (1.5,0) circle (2.5pt);
\draw(0,0) -- (1.5,0);
\draw(-.5,0) circle (.5cm);
\draw(2,0) circle (.5cm);
\end{scope}
\end{tikzpicture}
\end{center}
\caption{Two graph structures on a pair of handcuffs, for which the identity is a smoothing from left to right}\label{figure:smoothing}
\end{figure}

We close with several important families of graphs, examples of which are depicted in Figure \ref{fig:graph examples}.

\begin{example}
The \emph{star graph} $\stargraph{n}$ is the cone on the discrete space $\{1,\ldots, n\}$.
\end{example}

\begin{example}
The \emph{theta graph} $\thetagraph{n}$ is the unique graph with two vertices, $n$ edges, and no self-loops.
\end{example}

\begin{example}
The \emph{complete bipartite graph} $\completegraph{m,n}$ is the join of the discrete spaces $\{1,\ldots, m\}$ and $\{1,\ldots, n\}$.
\end{example}

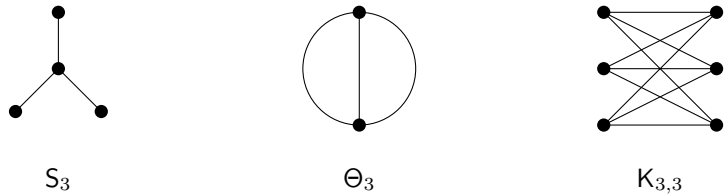
\begin{figure}[ht]
\begin{center}
\begin{tikzpicture}
\begin{scope}
\fill[black] (0,0) circle (2.5pt);
\fill[black] (0,.75) circle (2.5pt);
\fill[black] (.57,-.57) circle (2.5pt);
\fill[black] (-.57,-.57) circle (2.5pt);
\draw(0,0) -- (0,.75);
\draw(0,0) --(.57,-.57);
\draw(0,0) --(-.57,-.57);
\draw(0,-1.2) node[below]{$\stargraph{3}$};
\end{scope}
\begin{scope}[xshift=4 cm]
\fill[black] (0,-.75) circle (2.5pt);
\fill[black] (0,.75) circle (2.5pt);
\draw(0,0) circle (.75cm);
\draw(0,.75) -- (0,-.75);
\draw(0,-1.2) node[below]{$\thetagraph{3}$};
\end{scope}
\begin{scope}[xshift=8cm]
\foreach  \x in {-.75,.75}\foreach \y in {-.75,0,.75}
\fill[black] (\x,\y) circle (2.5pt);
\draw (-.75,.75) -- (.75,.75) -- (-.75,0) -- (.75,0) -- (-.75, -.75) -- (.75, .75);
\draw (.75, -.75) -- (-.75,.75) -- (.75, 0);
\draw (-.75,0) -- (.75,-.75)-- (-.75,-.75);
\draw(0,-1.2) node[below]{$\completegraph{3,3}$};
\end{scope}
\end{tikzpicture}
\end{center}
\caption{Examples of graphs}\label{fig:graph examples}
\end{figure}

\subsection{The \'{S}wi\k{a}tkowski complex}\label{section:swiatkowski} Our object of study in this paper is the homology of the configuration spaces of a graph $\graf$. Our primary weapon is a certain chain complex, which we now define---see \cite[\S2.2]{AnDrummondColeKnudsen:ESHGBG} for further discussion.

\begin{definition}
Let $\graf$ be a graph. For $v\in V$, write $S(v)=\mathbb{Z}\langle\varnothing, v, h\in H(v)\rangle$. The \emph{\'{S}wi\k{a}tkowski complex} is the $\mathbb{Z}[E]$-module \[\intrinsic{\graf}=\mathbb{Z}[E]\otimes\bigotimes_{v\in V} S(v),\] equipped with the bigrading $|\varnothing|=(0,0)$, $|v|=|e|=(0,1)$, and $|h|=(1,1)$, together with the differential determined by the equation $\partial(h)=e(h)-v(h)$.
\end{definition}

Note that the differential $\partial$ preseves the second grading, which corresponds to the number of particles in a configuration. We refer to this auxiliary grading as \emph{weight}.

We systematically omit all factors of $\varnothing$ and all tensor symbols when dealing with elements of $\intrinsic{\graf}$, and we regard half-edge generators at different vertices as permutable up to sign. 

\begin{theorem}[{\cite[Thm. 2.10]{AnDrummondColeKnudsen:ESHGBG}}]
There is a natural isomorphism of bigraded $\mathbb{Z}[E]$-modules \[H_*(B(\graf))\cong H_*(\intrinsic{\graf}).\]
\end{theorem}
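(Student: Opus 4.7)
The plan is to establish the isomorphism in stages, via discretization followed by a combinatorial collapse. First I would invoke Abrams' discrete model: subdividing $\graf$ sufficiently (so that essential paths between essential vertices are long enough and every self-loop is cut into enough edges), the space $B_k(\graf)$ is homotopy equivalent to the unordered discretized configuration space $UD_k(\graf)$, the cube complex whose cells are unordered tuples of pairwise disjoint closed cells of $\graf$. This reduces the problem to identifying the homology of a combinatorial object. Summing over $k$ then gives a $\mathbb{Z}$-grading by particle number, which will be the weight.

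Next I would carry out a discrete Morse collapse on $UD_k(\graf)$ of \'{S}wi\k{a}tkowski type. Fix a total order on the half-edges at each vertex and an orientation of each edge. A cube of $UD_k(\graf)$ is described by, at each vertex $v$, either no particle near $v$, a particle sitting at $v$, or a particle sitting on an incident half-edge; together with a multiset of particles in edge interiors. The matching pushes each ``interior'' particle toward a chosen endpoint whenever that move is available, and cancels the resulting pair of cubes. The critical (unmatched) cells are exactly the monomials in which the interior-edge particles have been expelled; these are in bijection with the monomials of $\intrinsic{\graf}$, with the tensor factor $S(v)=\mathbb{Z}\langle \varnothing, v, h\in H(v)\rangle$ recording the three local possibilities at $v$ and the polynomial factor $\mathbb{Z}[E]$ counting the remaining particles in each edge interior. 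The dimension of a critical cube equals its number of half-edge factors, matching the homological grading, and the total particle count recovers the weight grading.

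With the cellular structure identified, the induced differential on critical cells is determined by how a matched pair is collapsed: a half-edge generator $h\in H(v)$ has one boundary face where the particle collapses back onto $v$ (contributing $-v(h)$ up to sign) and one face where it is pushed into the interior of $e(h)$ (contributing $+e(h)$), yielding $\partial(h)=e(h)-v(h)$. The $\mathbb{Z}[E]$-module structure is manifest: multiplication by $e$ adds a particle in the interior of $e$, which is precisely the edge-stabilization map at the level of $UD_k(\graf)$. Naturality under subdivisions and subcomplex inclusions follows because the matching is defined by local rules. The main obstacle is verifying that the matching is acyclic and global---one needs the half-edge and edge orientations to be chosen compatibly so that no cycle of matched pairs arises, and one must check that subdivision does not destroy the matching. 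Once acyclicity is in hand, the rest of the argument is sign bookkeeping and invariance under refinement of the subdivision.
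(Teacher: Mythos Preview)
The paper does not prove this theorem; it is quoted verbatim as \cite[Thm.~2.10]{AnDrummondColeKnudsen:ESHGBG} and used as a black box, with precursors attributed to \cite{Swiatkowski:EHDCSG} and \cite{ChettihLuetgehetmann:HCSTL}. So there is no in-paper proof to compare against. Your outline---Abrams discretization followed by a \'{S}wi\k{a}tkowski-style discrete Morse collapse---is indeed the standard route taken in those references.

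That said, one point in your sketch is internally inconsistent and would need to be repaired before the argument goes through. You say the matching ``expels'' interior-edge particles, and then that the polynomial factor $\mathbb{Z}[E]$ counts ``the remaining particles in each edge interior''; these cannot both hold. What is really going on is that the Morse matching lives on the \emph{subdivided} graph $\graf'$: it pushes particles along each subdivided arc of an original edge $e$ into canonical positions, and the number of particles left sitting on that arc becomes the exponent of $e$, while only the cell nearest an essential vertex of $\graf$ can remain as a degree-one (``half-edge'') generator. Correspondingly, your differential computation is too quick: the face of such a half-edge cell at the far endpoint is not itself critical, so the Morse flow carries it along a gradient path through several subdivided cells before it lands on the critical cell representing $e(h)$ times the remaining monomial. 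Tracing that path and its sign is where the formula $\partial h = e(h)-v(h)$ actually comes from. Finally, be explicit that you are comparing the Morse complex of $UD_k(\graf')$ with $S(\graf)$ rather than $S(\graf')$; the subdivision invariance of $S(-)$ (equivalently, the contractibility of the bivalent factors $S(v)$ after reduction) is a separate lemma, not merely a sanity check.
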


Several comments are in order. First, precursors to this result can be found in \cite{Swiatkowski:EHDCSG} and \cite{ChettihLuetgehetmann:HCSTL}. Second, the action of $\mathbb{Z}[E]$ on the lefthand side arises from an $E$-indexed family of \emph{edge stabilization maps}. Stabilization at $e$ replaces the subconfiguration of particles lying in the closure of $e$ with the collection of averages of consecutive particles and endpoints---see Figure \ref{fig:edge stabilization} and \cite[\S2.2]{AnDrummondColeKnudsen:ESHGBG}. Third, regarding the implied functoriality, we direct the reader to \cite[\S2.3]{AnDrummondColeKnudsen:ESHGBG}.

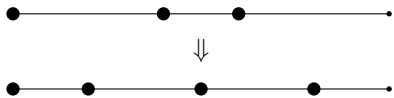
\begin{figure}[ht]
\begin{tikzpicture}
\fill[black] (0,0) circle (2.5pt);
\fill[black] (5,0) circle (1pt);
\fill[black] (2,0) circle (2.5pt);
\fill[black] (3,0) circle (2.5pt);
\draw(0,0) -- (5,0);
\draw(2.5,-.2) node[below]{$\Downarrow$};
\begin{scope}[yshift=-1cm]
\fill[black] (0,0) circle (2.5pt);
\fill[black] (5,0) circle (1pt);
\fill[black] (1,0) circle (2.5pt);
\fill[black] (2.5,0) circle (2.5pt);
\fill[black] (4,0) circle (2.5pt);
\draw(0,0) -- (5,0);
\end{scope}
\end{tikzpicture}
\caption{Edge stabilization}\label{fig:edge stabilization}
\end{figure}

\subsection{Reduction and explosion}\label{section:vertex explosion}

The \emph{reduced} \'{S}wi\k{a}tkowski complex $\fullyreduced{\graf}$ is obtained by replacing $S(v)$ in the definition of $\intrinsic{\graf}$ with the submodule $\fullyreduced{v}\subseteq S(v)$ spanned by $\varnothing$ and all differences of half-edges. The inclusion $\fullyreduced{\graf}\subseteq \intrinsic{\graf}$ is a quasi-isomorphism as long as $\graf$ has no isolated vertices \cite[Prop. 4.9]{AnDrummondColeKnudsen:SSGBG}. Note that, for any $h_0\in H(v)$, a basis for $\fullyreduced{v}$ is given by $\{\varnothing\}\cup \{h-h_0\}_{h_0\neq h\in H(v)}$. In this way, a (non-canonical) basis for $\fullyreduced{\graf}$ may be obtained.

Given a graph $\graf$ and $v\in V$, we write $\graf_v$ for the graph obtained by exploding the vertex $v$---see Figure \ref{fig:vertex explosion} and \cite[Def. 2.12]{AnDrummondColeKnudsen:ESHGBG}---which we regard as a subgraph of a subdivision of $\graf$, uniquely up to isotopy. More generally, given a subset $W\subseteq V$, we write $\graf_W$ for the graph obtained by exploding each of the vertices in $W$. \\

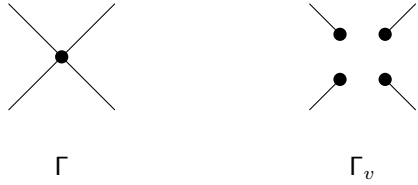
\begin{figure}[ht]
\begin{center}
\begin{tikzpicture}
\begin{scope}
\fill[black] (0,0) circle (2.5pt);
\draw(-.707,-.707) -- (.707,.707);
\draw(-.707,.707) -- (.707,-.707);
\draw(0,-1.2) node[below]{$\graf$};
\end{scope}
\begin{scope}[xshift=4cm]
\fill[black] (-.3,-.3) circle (2.5pt);
\fill[black] (-.3,.3) circle (2.5pt);
\fill[black] (.3,-.3) circle (2.5pt);
\fill[black] (.3,.3) circle (2.5pt);
\draw(-.707,-.707) -- (-.3,-.3);
\draw(.707,.707) -- (.3,.3);
\draw(-.707,.707) -- (-.3,.3);
\draw(.707,-.707) -- (.3,-.3);
\draw(0,-1.2) node[below]{$\graf_v$};
\end{scope}
\end{tikzpicture}
\end{center}
\caption{A local picture of vertex explosion}
\label{fig:vertex explosion}
\end{figure}

\begin{proposition}[{\cite[Prop. 2.3]{AnDrummondColeKnudsen:ESHGBG}}]\label{prop:vertex explosion}
Fix $v\in V$ and $h_0\in H(v)$. The sequence {\small\[\cdots\to H_i(B_k(\graf_v))\xrightarrow{\iota_*} H_i(B_k(\graf))\xrightarrow{\psi} \bigoplus_{h_0\neq h\in H(v)}H_{i-1}(B_{k-1}(\graf_v))\xrightarrow{\delta} H_{i-1}(B_k(\graf_v))\to\cdots\]} is exact. Here, 
\begin{enumerate}
\item the map $\iota_*$ is induced by the inclusion of $\graf_v$,
\item the map $\psi$ is induced by the chain map on reduced \'{S}wi\k{a}tkowski complexes sending $b+\sum(h-h_0)a_h$ to $(a_h)$, where $b$ involves no half-edge generators at $v$, and
\item on the summand indexed by $h$, the map $\delta$ is multiplication by the ring element $e(h)-e(h_0)$.
\end{enumerate} All maps shown are natural and compatible with edge stabilization.
\end{proposition}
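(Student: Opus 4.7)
The plan is to produce the claimed long exact sequence as the long exact sequence in homology associated to a short exact sequence of chain complexes splitting $\fullyreduced{\graf}$ according to whether a basis element contains a half-edge factor at $v$. Since the vertices of $\graf_v$ that replace $v$ are univalent, each of them contributes only $\mathbb{Z}\langle\varnothing\rangle$ to the tensor product defining $\fullyreduced{\graf_v}$; using the basis $\{\varnothing\}\cup\{h-h_0\}_{h\neq h_0}$ of $\fullyreduced{v}$, this gives a decomposition of bigraded modules
\[
\fullyreduced{\graf}\;\cong\;\fullyreduced{\graf_v}\;\oplus\;\bigoplus_{h_0\neq h\in H(v)}(h-h_0)\cdot\fullyreduced{\graf_v},
\]
where the second summand is shifted in bidegree by $(1,1)$. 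By the derivation property of the differential together with the identity $\partial(h-h_0)=e(h)-e(h_0)$, the first summand is a subcomplex, and
\[
\partial\bigl((h-h_0)\otimes b\bigr)=(e(h)-e(h_0))\otimes b \;-\; (h-h_0)\otimes\partial b,
\]
whose two terms lie in the subcomplex and in the $h$-indexed quotient summand, respectively. Invoking Theorem 2.10 together with the reduction of Section 2.3, the long exact sequence of this short exact sequence of chain complexes takes exactly the claimed form in each weight $k$.

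With the short exact sequence in hand, I would then identify the three maps. The quotient projection is by construction the map $\psi$ described in the statement, and the subcomplex inclusion $\fullyreduced{\graf_v}\hookrightarrow\fullyreduced{\graf}$ agrees with the chain map induced by the topological inclusion $\graf_v\hookrightarrow\graf$, by functoriality of the \'{S}wi\k{a}tkowski construction. To compute the connecting homomorphism on the $h$-summand, I would lift a cycle $z\in\fullyreduced{\graf_v}$ to $(h-h_0)\otimes z$ in $\fullyreduced{\graf}$ and apply $\partial$; since $z$ is a cycle, the second term in the formula above vanishes, leaving $(e(h)-e(h_0))\,z$, so $\delta$ acts on the $h$-summand by multiplication by $e(h)-e(h_0)$. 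Naturality in $\graf$ and compatibility with edge stabilization are immediate, since the entire construction is carried out at the chain level within the natural, stabilization-equivariant framework of the \'{S}wi\k{a}tkowski complex.

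The main thing I expect to require care, as opposed to being a genuine obstacle, is verifying that the chain-level subcomplex inclusion really coincides with the map induced by the topological inclusion $\iota$, rather than some other natural candidate, and keeping track of signs in the tensor-product differential; given the foundations laid in the companion paper, both should be routine verifications.
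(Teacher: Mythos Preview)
The present paper does not prove this proposition; it is imported from the companion paper cited in the statement. Your argument is correct and is exactly the intended one: the explicit descriptions of $\psi$ and $\delta$ in the statement are tailored to the short exact sequence of reduced \'{S}wi\k{a}tkowski complexes that you build from the basis $\{\varnothing\}\cup\{h-h_0\}$ of $\fullyreduced{v}$, and the long exact sequence follows mechanically once one observes, as you do, that the new vertices of $\graf_v$ are univalent and hence contribute trivially to $\fullyreduced{\graf_v}$.
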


The same exact sequence obtains with homology replaced by homology with coefficients in an arbitrary commtuative ring.

We close this section by drawing the following simple, but useful, consequence.

\begin{lemma}\label{lem:1-cut torsion}
Let $v$ be a vertex of $\graf$ and $e_1$ and $e_2$ edges lying in distinct components of $\graf_v$. The $(e_1-e_2)$-torsion submodule of $H_1(B(\graf))$ is contained in the image of $H_1(B(\graf_v))$. 
\end{lemma}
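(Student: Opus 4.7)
The plan is to apply Proposition \ref{prop:vertex explosion} at the vertex $v$. Working in weight $k$, the relevant segment
\[
H_1(B_k(\graf_v)) \xrightarrow{\iota_*} H_1(B_k(\graf)) \xrightarrow{\psi} \bigoplus_{h_0\neq h\in H(v)} H_0(B_{k-1}(\graf_v)) \xrightarrow{\delta} H_0(B_k(\graf_v))
\]
is exact, so for every $\alpha\in H_1(B_k(\graf))$ with $(e_1-e_2)\alpha = 0$ it suffices to show $\psi(\alpha) = 0$. Since $\psi$ is compatible with edge stabilization, $(e_1-e_2)\psi(\alpha) = \psi((e_1-e_2)\alpha) = 0$, so the proof reduces to verifying that multiplication by $e_1-e_2$ is injective on $H_0(B(\graf_v))$.

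To this end, let $C_1,\ldots,C_m$ denote the components of $\graf_v$, arranged so that $e_1\in E(C_1)$ and $e_2\in E(C_2)$. The product decomposition $B(\graf_v)\cong\prod_i B(C_i)$ together with the K\"unneth formula gives $H_0(B(\graf_v))\cong\bigotimes_i H_0(B(C_i))$. Since $C_1$ and $C_2$ each contain an edge, the corresponding factors are polynomial rings $\mathbb{Z}[X_1]$ and $\mathbb{Z}[X_2]$ (with $X_i$ the common action of any edge of $C_i$ by particle shift), so $e_1-e_2$ acts as multiplication by $X_1-X_2$ on $\mathbb{Z}[X_1,X_2]\otimes_{\mathbb{Z}}\bigotimes_{i\geq3}H_0(B(C_i))$. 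Because $X_1-X_2$ is a non-zero-divisor in $\mathbb{Z}[X_1,X_2]$ and the remaining tensor factor is a free abelian group (hence flat), this action is injective.

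The argument is essentially formal once Proposition \ref{prop:vertex explosion} is invoked. The only point requiring minor care is the possible presence of isolated-vertex components $C_i$ for $i\geq3$ produced by vertex explosion, for which $H_0(B(C_i))$ is a truncation of a polynomial ring rather than the full ring; this is still a free abelian group, so the injectivity of $X_1-X_2$ persists unchanged. I expect no real obstacle.
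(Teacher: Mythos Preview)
Your proof is correct and follows exactly the paper's approach: invoke the long exact sequence of Proposition~\ref{prop:vertex explosion} at $v$, and reduce to showing that multiplication by $e_1-e_2$ is injective on the $H_0$ term. The paper simply asserts this injectivity as a consequence of the hypothesis, whereas you spell it out via the K\"unneth decomposition over the components of $\graf_v$; this extra detail is fine but not essential.
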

\begin{proof}
Our assumption implies that multiplication by $e_1-e_2$ is injective on the third term in the exact sequence \[\cdots\to H_1(B_k(\graf_v))\to H_1(B_k(\graf))\to  \bigoplus_{d(v)-1}H_0(B_{k-1}(\graf_v))\to \cdots,\] and the claim follows.
\end{proof}

\section{Generators and relations}\label{section:generators and relations}

This section introduces loop, star, and theta classes, the atomic homology classes involved in Theorem \ref{thm:main}, and explores some relations among them.

\subsection{Loop classes and star classes} We begin with two basic types of class in $H_1(B(\graf))$. The reader is directed to \cite[\S5.1]{AnDrummondColeKnudsen:SSGBG} for further details.

\begin{example}\label{example:loop class}
Since $\graf=B_1(\graf)$ is a subspace of $B(\graf)$, an oriented cycle in $\graf$ determines an element of $H_*(B(\graf))$, called a \emph{loop class}. We denote loop classes generically by the letter $\beta$. 
\end{example}

A standard chain level representative of a loop class is obtained by summing the differences of half-edges involved in the cycle in question. For example, the standard representative of the unique loop class in the graph $\lollipopgraph{}$ depicted in Figure \ref{fig:lollipop}, oriented clockwise, is $b=h-h'\in \fullyreduced{\lollipopgraph{}}$.

\begin{figure}[ht]
\begin{tikzpicture}[scale=.75]
\fill[black] (0,-1) circle (10/3pt);
\fill[black] (0,-2.5) circle (10/3pt);
\draw(0,0) circle (1cm);
\draw(0,-2.5) -- (0,-1);
\draw (.35,-.65) node{$h'$}; 
\draw (-.27,-.65) node{$h$}; 
\draw (0,1.25) node{$e'$}; 
\draw (0.2,-1.75) node{$e$}; 
\end{tikzpicture}
\caption{The lollipop graph $\lollipopgraph{}$}
\label{fig:lollipop}
\end{figure}

\begin{example}\label{example:star class}
In view of the homotopy equivalence $S^1\simeq B_2(\stargraph{3})$, the choice of half-edges $h_1$, $h_2$, and $h_3$ sharing a common vertex determines a \emph{star class} in $H_1(B_2(\graf))$, which depends on the ordering only up to sign. We denote star classes by $\alpha$ or, e.g., $\alpha_{123}$ if we wish to emphasize the particular choice of half-edges.
\end{example}

Writing $e_j$ for the edge associated to $h_j$, a standard chain level representative for a star class is given by the sum \[a=e_3(h_1-h_2)+e_2(h_3-h_1)+e_1(h_2-h_3).\] The alternative expression \[a=(e_1-e_3)(h_2-h_1)-(e_1-e_2)(h_3-h_1)\] in the basis for $\fullyreduced{\graf}$ privileging $h_1$ is also useful.

In what follows, we refer to the standard representatives introduced above as \emph{loop cycles} and \emph{star cycles} respectively.

\begin{definition}\label{def:support}
The \emph{support} of a star cycle $a$ is the vertex $v(h)$, where $h$ is any half-edge involved in $a$. The \emph{support} of a loop cycle $b$ is the union of the edges $e(h)$ and vertices $v(h)$, where $h$ ranges over all half-edges involved in $b$.
\end{definition}

In other words, the support of a star cycle is the essential vertex used in its definition, and the support of a loop cycle is the loop used in its definition. The reader is warned that support is not well-defined at the level of homology.

\begin{proposition}[{\cite[Prop. 5.6]{AnDrummondColeKnudsen:SSGBG}}]\label{prop:stars and loops generate}
If $\graf$ is connected, then $H_1(B(\graf))$ is generated over $\mathbb{Z}[E]$ by star classes and loop classes.
\end{proposition}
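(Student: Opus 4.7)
The plan is to proceed by induction on the first Betti number $\beta_1(\graf)$, with Proposition~\ref{prop:vertex explosion} as the inductive engine. Because exploding a vertex can disconnect a graph, I would first prove a mildly stronger statement that does not assume connectedness; this extension follows from the connected case via the K\"unneth isomorphism $H_*(B(\graf_1 \sqcup \graf_2)) \cong H_*(B(\graf_1)) \otimes H_*(B(\graf_2))$, together with the observation that star and loop classes of each summand generate the cross-terms appearing in $H_1$.

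For the base case $\beta_1(\graf) = 0$, the graph is a disjoint union of trees, for which $H_1(B(\graf))$ is known to be generated by star classes alone. I would invoke Farley--Sabalka's discrete Morse theory or argue directly from the reduced \'{S}wi\k{a}tkowski complex, where every $1$-cycle in a tree is forced by the relations $\partial(h - h_0) = e(h) - e(h_0)$ to be a combination of star cycles at essential vertices. For the inductive step, I would choose a vertex $v$ lying on some cycle of $\graf$; an Euler characteristic calculation shows $\beta_1(\graf_v) < \beta_1(\graf)$ on each component, so the inductive hypothesis applies to $\graf_v$. Under $\iota$, star classes of $\graf_v$ map to star classes of $\graf$ and loop classes of $\graf_v$ map to loop classes of $\graf$ (namely those whose underlying cycle does not pass through $v$). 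Hence the image of $\iota_*$ already lies in the star-and-loop submodule, and the exact sequence of Proposition~\ref{prop:vertex explosion} reduces the problem to showing that $\psi$ restricted to star classes at $v$ and loop classes through $v$ surjects onto $\mathrm{im}(\psi) = \ker(\delta)$.

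The main obstacle will be this final surjectivity check. Using the chain-level formulas from Examples~\ref{example:loop class} and~\ref{example:star class}, a loop class passing through $v$ via half-edges $h, h_0$ contributes, under $\psi$, the canonical generator of the $h$-coordinate of $\bigoplus_{h \neq h_0} H_0(B_{k-1}(\graf_v))$, while a star cycle at $v$ on half-edges $h_1 = h_0, h_2, h_3$ contributes a two-coordinate element $\bigl((e_1 - e_3) \cdot \varepsilon_{h_2},\, -(e_1 - e_2) \cdot \varepsilon_{h_3}\bigr)$, where $\varepsilon_h$ denotes the generator of the component of $\graf_v$ receiving the endpoint of $h$. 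I would then conclude by an elementary combinatorial argument: given any $z = (a_h)_{h \neq h_0}$ in $\ker(\delta)$, inductively exploit loop-through-$v$ contributions to clear those coordinates whose half-edges are connected to $h_0$ by cycles in $\graf$, and use star-at-$v$ contributions to balance the remaining edge-variable relations imposed by $\delta$. With this covered, every class in $H_1(B(\graf))$ differs from a combination of star and loop classes by an element of $\mathrm{im}(\iota_*)$, which is covered by the inductive hypothesis.
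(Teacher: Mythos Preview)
The paper does not prove this proposition; it is quoted from \cite[Prop.~5.6]{AnDrummondColeKnudsen:SSGBG} and used as a black box. So there is no in-paper argument to compare against.

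Your plan is nonetheless a reasonable one, and in fact mirrors the long-exact-sequence strategy that drives most of the paper (compare the structure of Proposition~\ref{prop:implication}). The induction on $\beta_1$ is set up correctly: choosing $v$ on a cycle forces $\beta_1(\graf_v)<\beta_1(\graf)$, and your K\"unneth reduction handles the possible disconnection of $\graf_v$, since each component has strictly smaller first Betti number and $H_0$ of a configuration space is visibly generated over $\mathbb{Z}[E]$ by the empty configuration. The identification of $\psi$ on loop and star cycles is also correct: a loop through $v$ via $h$ and $h_0$ contributes the unit in the $h$-coordinate of $H_0(B_0(\graf_v))$, and a star cycle contributes the two-coordinate element you wrote.

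The only place where real work remains is the surjectivity of $\psi|_M$ onto $\ker(\delta)$ when $\graf_v$ is disconnected. Your sketch is on the right track but underspecified: loop classes only reach coordinates $h$ lying in the same component of $\graf_v$ as $h_0$, so for the remaining coordinates you genuinely need star classes, and the verification that their $\psi$-images (together with stabilizations) span the kernel of $\delta$ requires a short argument about the polynomial algebra $H_0(B(\graf_v))\cong\mathbb{Z}[\pi_0(\graf_v)]$ and the regularity of the elements $C_{i(h)}-C_{i(h_0)}$. This is not difficult, but it is the substantive step, and you should expect to write it out rather than wave at it.
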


Star classes and loop classes interact according to a relation called the \emph{Q-relation}. Our notation will refer to the graph $\lollipopgraph{}$ of Figure \ref{fig:lollipop}, but functoriality propagates the relation to any graph with a subgraph isomorphic to a subdivision of $\lollipopgraph{}$. Writing $\beta$ for the clockwise oriented loop class and $\alpha$ for the counterclockwise oriented star class in $\lollipopgraph{}$, we have the following.

\begin{lemma}[Q-relation]\label{lem:Q}
In the homology of the configuration spaces of the graph $\lollipopgraph{}$, there is the relation
$(e-e')\beta=\alpha$.
\end{lemma}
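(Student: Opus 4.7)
The plan is to prove the Q-relation by direct chain-level computation in the reduced \'Swi\k{a}tkowski complex $\fullyreduced{\lollipopgraph{}}$. Since both $\alpha$ and $(e-e')\beta$ have weight $2$ and live in degree $1$, and since both are represented by cycles built from the standard formulas for loop and star classes, it suffices to exhibit an explicit chain-level equality (possibly up to sign), then match orientation conventions.

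First, I would set notation: let $v$ be the essential (trivalent) vertex of $\lollipopgraph{}$, let $h$ and $h'$ be the half-edges at $v$ on the loop edge $e'$ (with $h$ on the clockwise side, $h'$ on the counterclockwise side), and let $h''$ be the half-edge at $v$ on the tail $e$. The standard representative of the clockwise loop class is $b = h - h' \in \fullyreducedsubnoarg{v}$, which is a cycle since $\partial h = \partial h' = e' - v$. For the star class, I would apply the formula from Section~\ref{section:generators and relations} with the ordering $(h_1,h_2,h_3) = (h,h',h'')$ of half-edges at $v$, so $(e_1,e_2,e_3) = (e',e',e)$:
\[
a \;=\; e_3(h_1-h_2) + e_2(h_3-h_1) + e_1(h_2-h_3) \;=\; e(h-h') + e'(h''-h) + e'(h'-h'').
\]

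Next, I would expand and cancel: the $e'h''$ terms cancel, leaving
\[
a \;=\; e(h-h') - e'(h-h') \;=\; (e-e')(h-h') \;=\; (e-e')\,b.
\]
This is a literal equality of chains in $\fullyreduced{\lollipopgraph{}}$, so the corresponding homology classes agree. By the isomorphism $H_*(B(\lollipopgraph{})) \cong H_*(\intrinsic{\lollipopgraph{}})$ of Section~\ref{section:swiatkowski}, we obtain $(e-e')\beta = \alpha$ in homology.

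The only real subtlety is the orientation bookkeeping: I would verify that the cyclic ordering $(h,h',h'')$ picked above is precisely the one producing the counterclockwise star class, as opposed to its negative. This is a direct unpacking of Example~\ref{example:star class} and the homotopy equivalence $B_2(\stargraph{3}) \simeq S^1$; the opposite ordering would merely change the sign on both sides consistently. I do not expect any further obstacles, since the entire argument reduces to a single algebraic cancellation in the reduced \'Swi\k{a}tkowski complex of a small graph.
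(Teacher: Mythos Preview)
Your computation is correct: the chain-level identity $a=(e-e')b$ in $\fullyreduced{\lollipopgraph{}}$ follows from exactly the cancellation you wrote, and the orientation check is the only loose end, which you handle appropriately. The paper states Lemma~\ref{lem:Q} without proof, so there is no alternative argument to compare against; your direct verification in the \'Swi\k{a}tkowski complex is the natural one and is presumably what the authors had in mind.
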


Star classes at distinct vertices can also be related. Referring to the graph $\thetagraph{3}$ as shown in Figure \ref{fig:graph examples}, and writing $\alpha$ and $\alpha'$ for the clockwise oriented star classes at the top vertex and bottom vertices, respectively, we have the following relation.

\begin{lemma}[$\theta$-relation]\label{lem:theta}
In the homology of the configuration spaces of the graph $\thetagraph{3}$, there is the relation of star classes
$\alpha-\alpha'=0$.
\end{lemma}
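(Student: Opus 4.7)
The plan is to verify the relation at the chain level by writing down an explicit element of bidegree $(2,2)$ in the Świątkowski complex $\intrinsic{\thetagraph{3}}$ whose boundary equals $\alpha - \alpha'$. The natural guess is the two-particle cycle in which one particle sits near each vertex and the assignment is averaged cyclically over the three edges.

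Write $v$, $v'$ for the two vertices of $\thetagraph{3}$, $e_1, e_2, e_3$ for its edges, and $h_i \in H(v)$, $h_i' \in H(v')$ for the half-edges with $e(h_i) = e(h_i') = e_i$. I would index the half-edges so that $(h_1, h_2, h_3)$ is the clockwise cyclic order at $v$ in the planar embedding. Since a planar embedding induces opposite cyclic orders at the two vertices of $\thetagraph{3}$, the clockwise cyclic order at $v'$ is then $(h_1', h_3', h_2')$, so the clockwise star cycle at $v'$ satisfies $\alpha' = -\alpha'_{123}$ when the right-hand side is computed by the naive formula. The chain to consider is
\[
c = h_1(h_2' - h_3') + h_2(h_3' - h_1') + h_3(h_1' - h_2') \in \intrinsic{\thetagraph{3}}.
\]

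Applying the graded Leibniz rule, one has $\partial(h_i h_j') = (e_i - v) h_j' - h_i (e_j - v')$. Within each of the three summands of $c$ the two $v'$ contributions cancel; after summing cyclically, the $v$ contributions telescope to zero as well. What remains separates into a cyclic combination of $e_k h_j'$ terms that assembles to $\alpha'_{123}$ and a cyclic combination of $e_k h_i$ terms that assembles to $\alpha_{123} = \alpha$, giving $\partial(c) = \alpha + \alpha'_{123} = \alpha - \alpha'$.

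The entire argument is mechanical; the only point that requires care is the orientation bookkeeping coming from the reversed cyclic order at $v'$ in a planar embedding, which is exactly what converts the naive boundary $\alpha + \alpha'_{123}$ into the desired $\alpha - \alpha'$.
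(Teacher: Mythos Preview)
Your argument is correct. The chain $c = h_1(h_2'-h_3') + h_2(h_3'-h_1') + h_3(h_1'-h_2')$ has exactly the boundary you claim: the $v$ and $v'$ terms telescope away, and the remaining edge terms reassemble into $a_{123}+a'_{123}$. Your handling of the orientation is also right, since the clockwise cyclic orders at the two vertices of a planar $\thetagraph{3}$ are mutually reversed, so the clockwise star class at $v'$ is $-\alpha'_{123}$ and hence $\partial c = \alpha - \alpha'$.

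As for comparison: the paper does not actually supply a proof of this lemma. Both the $Q$-relation and the $\theta$-relation are stated as known facts, with the reader directed to \cite{AnDrummondColeKnudsen:SSGBG} for details. Your direct chain-level verification is the standard argument and is exactly what one would reconstruct from the definitions; there is nothing to contrast.
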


This last relation has the following amusing consequence, which is left as an exercise (or see \cite[p. 60]{AnDrummondColeKnudsen:SSGBG}).

\begin{example}\label{example:K33 stars}
Any two star classes in $H_1(B_2(\completegraph{3,3}))$ are equal, regardless of orientation. In particular, any such star class in is $2$-torsion.
\end{example}

It is useful to distinguish those star classes involved in no instances of the $\theta$-relation.

\begin{definition}
A star cycle with support $v$ is \emph{rigid} if it involves half-edges lying in multiple components of $\graf_v$. A star class is \emph{rigid} if it has a rigid representative.
\end{definition}

According to \cite[Lem. 3.15]{AnDrummondColeKnudsen:ESHGBG}, any star cycle representing a rigid star class is rigid.

In view of the homeomorphism $B(\graf_1\sqcup \graf_2)\cong B(\graf_1)\times B(\graf_2)$, classes in the homology of $B(\graf)$ represented by cycles in the configuration spaces of disjoint subgraphs give rise to an \emph{external product} class in $H_*(B(\graf))$ \cite[Def. 5.10]{AnDrummondColeKnudsen:SSGBG}. At the level of \'{S}wi\k{a}tkowski complexes, the external product is represented by the tensor product of representing cycles. The reader is cautioned that the external product may depend on the choice of representing cycles.

Note that stabilizations of external products of loop classes and star classes are represented by maps from tori. 

\subsection{Theta classes} In this section, we give an elementary description of the non-toric class in $H_2(B_3(\thetagraph{4}))$ discovered in \cite{ChettihLuetgehetmann:HCSTL} and \cite{WiltshireGordon:MCSSC} terms of the combinatorics of the \'{S}wi\k{a}tkowski complex. We begin with a simple lemma.

\begin{lemma}\label{lem:star} In the following, $2\leq i\leq 4$ and $2\leq j<k\leq 4$. \begin{enumerate}
\item The Abelian group $H_1(B_2(\stargraph{4}))$ is freely generated by the classes $\alpha_{1jk}$.
\item The Abelian group $H_1(B_3(\stargraph{4}))$ is generated by the classes $(e_i-e_1)\alpha_{1jk}$ and $e_1\alpha_{1jk}$, subject only to the relation \[(e_4-e_1)\alpha_{123}-(e_3-e_1)\alpha_{124}+(e_2-e_1)\alpha_{134}=0.\]
\end{enumerate}
\end{lemma}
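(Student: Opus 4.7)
The plan is to deduce both parts from the vertex explosion sequence of Proposition~\ref{prop:vertex explosion} applied at the central vertex $v$ of $\stargraph{4}$. Since $(\stargraph{4})_v$ is a disjoint union of four intervals, each of whose unordered configuration spaces is contractible, we have $H_0(B_n((\stargraph{4})_v)) \cong \mathbb{Z}[E]_n$, the weight-$n$ part of the polynomial ring, and $H_i(B_n((\stargraph{4})_v)) = 0$ for $i \geq 1$. The long exact sequence therefore collapses to an isomorphism
\[
H_1(B_k(\stargraph{4})) \cong \ker\bigl(\delta_k \colon \textstyle\bigoplus_{b=2}^{4} \mathbb{Z}[E]_{k-1} \to \mathbb{Z}[E]_k\bigr),
\]
where the connecting homomorphism $\delta_k$ acts by multiplication by $e_b - e_1$ on the $b$th summand.

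Next I would recognize $\delta_k$ as the weight-$k$ part of the first differential of the Koszul complex $K_\bullet$ on the sequence $(e_2 - e_1,\, e_3 - e_1,\, e_4 - e_1)$. Since these three linear forms extend to a basis $(e_1,\, e_2 - e_1,\, e_3 - e_1,\, e_4 - e_1)$ of $\mathbb{Z}[E]_1$, they form a regular sequence in $\mathbb{Z}[E]$, so $K_\bullet$ is exact in positive degrees. Consequently, as a $\mathbb{Z}[E]$-module, $\ker\delta$ is presented by the three Koszul relations
\[
R_{jk} := d_2(v_j \wedge v_k) = (e_k - e_1) v_j - (e_j - e_1) v_k \qquad (2 \leq j < k \leq 4),
\]
modulo the single second Koszul relation $d_3(v_2 \wedge v_3 \wedge v_4) = (e_4 - e_1) R_{23} - (e_3 - e_1) R_{24} + (e_2 - e_1) R_{34}$.

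The final step is to identify this Koszul presentation with one in terms of star classes. Using the prescription for $\psi$ in Proposition~\ref{prop:vertex explosion}(2) together with the standard chain representative $\alpha_{1jk} = (e_1 - e_k)(h_j - h_1) - (e_1 - e_j)(h_k - h_1)$, a direct computation yields $\psi(\alpha_{1jk}) = -R_{jk}$. For part (1), the weight-$2$ part of $K_2$ is freely spanned over $\mathbb{Z}$ by $R_{23}, R_{24}, R_{34}$, since $K_3$ contributes nothing below weight $3$; this gives the three free generators $\alpha_{123}, \alpha_{124}, \alpha_{134}$. For part (2), the weight-$3$ part of $K_2$ is the free abelian group of rank $12$ on $e_a R_{jk}$ for $a \in \{1,2,3,4\}$ and $(j,k) \in \{(2,3),(2,4),(3,4)\}$; we repackage this basis as $e_1 \alpha_{1jk}$ together with $(e_i - e_1)\alpha_{1jk}$ for $i \in \{2,3,4\}$, and the unique weight-$3$ generator of $d_3(K_3)$ translates into the stated relation.

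The main obstacle is the sign bookkeeping required to match the Koszul relation $R_{jk}$ with $\alpha_{1jk}$ coherently across all three pairs; once this is in place, the exactness of the Koszul complex of a regular sequence delivers both the stated generating set and the fact that the displayed syzygy is the \emph{only} relation in the relevant weight, since $K_3$ is free of rank $1$ over $\mathbb{Z}[E]$ and $K_4 = 0$.
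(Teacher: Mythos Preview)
Your argument is correct and genuinely different from the paper's. The paper proceeds by external citation: generation by stabilized star classes comes from Proposition~\ref{prop:stars and loops generate}, the validity of the displayed relation from \cite[Lem.~2.9,~2.10]{AnDrummondColeKnudsen:AHGBG}, and completeness of the relations from the rank count of \cite[Cor.~4.2]{FarleySabalka:DMTGBG}, which gives ranks $3$ and $11$. Your route instead applies Proposition~\ref{prop:vertex explosion} at the central vertex, identifies $H_1(B_k(\stargraph{4}))$ with the degree-$k$ kernel of the first Koszul differential on the regular sequence $(e_2-e_1,e_3-e_1,e_4-e_1)$, and then reads off the presentation from Koszul exactness. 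This is more self-contained within the paper's own machinery and has the advantage of \emph{explaining} the relation structurally as the unique second Koszul syzygy, whereas the paper's proof only verifies it. The tradeoff is that the paper's version is a three-line appeal to known results, while yours requires tracking the identification $\psi(\alpha_{1jk})=-R_{jk}$ carefully (which you do correctly). Both approaches yield the same rank-$11$ group in weight $3$, confirming that your count of twelve generators modulo one relation is on the nose.
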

\begin{proof}
Since both groups are spanned by stabilized star classes by Proposition \ref{prop:stars and loops generate}, the relations of \cite[Lem. 2.9, 2.10]{AnDrummondColeKnudsen:AHGBG} imply generation and the validity of the relation. Since $H_1(B_2(\stargraph{4}))$ and $H_1(B_3(\stargraph{4}))$ are free Abelian with respective ranks $3$ and $11$ by \cite[Cor. 4.2]{FarleySabalka:DMTGBG}, the claim follows.
\end{proof}

\begin{proposition}\label{prop:theta class}
The group $H_2(B_3(\thetagraph{4}))$ is free Abelian of rank $1$.
\end{proposition}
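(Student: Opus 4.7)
The plan is to extract $H_2(B_3(\thetagraph{4}))$ from the vertex-explosion exact sequence of Proposition \ref{prop:vertex explosion} applied at one of the two vertices $v$ of $\thetagraph{4}$. Exploding $v$ converts each of the four edges of $\thetagraph{4}$ into a pendant tail incident on the remaining vertex, giving an identification $\thetagraph{4}_v \cong \stargraph{4}$. Since $\stargraph{4}$ is a tree with a single essential vertex, each $B_k(\stargraph{4})$ is homotopy equivalent to a $1$-dimensional CW complex, by the Farley--Sabalka discrete Morse theory cited in Lemma \ref{lem:star}. In particular $H_2(B_3(\stargraph{4}))=0$, so the long exact sequence truncates to
\[ 0 \to H_2(B_3(\thetagraph{4})) \to \bigoplus_{h_1\neq h \in H(v)} H_1(B_2(\stargraph{4})) \xrightarrow{\delta} H_1(B_3(\stargraph{4})). \]

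Next, I would apply Lemma \ref{lem:star} to identify both terms explicitly. Taking $h_1$ to be the half-edge at $v$ associated to $e_1$, the middle term is free abelian of rank $9$ with basis $\alpha_{1jk}^{(h_i)}$ running over $i\in\{2,3,4\}$ and $2\leq j<k\leq 4$, and Proposition \ref{prop:vertex explosion} tells us that $\delta$ carries $\alpha_{1jk}^{(h_i)}$ to $(e_i-e_1)\alpha_{1jk}$. These nine elements coincide with nine of the twelve generators of $H_1(B_3(\stargraph{4}))$ listed in Lemma \ref{lem:star}(2), and the sole defining relation displayed there involves exactly these nine generators. A routine manipulation of abelian presentations then yields $\ker(\delta)\cong\mathbb{Z}$, generated by $\alpha_{123}^{(h_4)}-\alpha_{124}^{(h_3)}+\alpha_{134}^{(h_2)}$, so $H_2(B_3(\thetagraph{4}))$ is free abelian of rank $1$.

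The substantive work has already been done in Lemma \ref{lem:star}, so no serious new obstacle arises; the only nuance is checking that the defining relation of Lemma \ref{lem:star}(2) lies entirely within the nine generators in the image of $\delta$, which is visible from its form. A pleasant byproduct is an explicit representative of the theta generator as a small combination of star and half-edge classes distributed at the two vertices of $\thetagraph{4}$, which should align with the combinatorial description of theta classes given in the subsequent exposition.
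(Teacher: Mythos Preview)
Your proof is correct and follows essentially the same approach as the paper: both explode a vertex of $\thetagraph{4}$ to obtain $\stargraph{4}$, use Lemma \ref{lem:star} to describe the map $\delta$ explicitly, and read off that the kernel is infinite cyclic. The paper phrases the final step as ``the image of $\delta$ is free abelian of rank $8$,'' whereas you compute $\ker(\delta)$ directly, but the content is identical.
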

\begin{proof}
Consider the exact sequence \[0\to H_2(B_3(\thetagraph{4}))\xrightarrow{\psi} H_1(B_2(\stargraph{4}))^{\oplus 3}\xrightarrow{\delta} H_1(B_3(\stargraph{4}))\] arising from Proposition \ref{prop:vertex explosion} after exploding one of the vertices of $\thetagraph{4}$, privileging the half-edge incident on $e_1$. It follows from Lemma \ref{lem:star} and the explicit formula for $\delta$ given in Proposition \ref{prop:vertex explosion} that the image of $\delta$ is free Abelian of rank $8$, implying the claim.
\end{proof}

\begin{definition}\label{def:theta class}
Let $\graf$ be a graph. A \emph{theta class} is a class in $H_2(B_3(\graf))$ that is the image of a generator of $H_2(B_3(\thetagraph{4}))$ under a topological embedding $\thetagraph{4}\to \graf$.
\end{definition}

In what follows, it will be useful to have a second method of accessing theta classes. We subdivide one of the edges of $\thetagraph{4}$ once by adding a bivalent vertex $v$ and write (abusively) $\thetagraph{v}$ for the graph obtained by exploding this bivalent vertex.

\begin{lemma}\label{lem:theta les}
In the long exact sequence for the vertex explosion $\thetagraph{v}$, the map $\psi$ sends a generator of $H_2(B_3(\thetagraph{4}))$ to the unique (up to sign) non-rigid star class in $H_1(B_2(\thetagraph{v}))$.
\end{lemma}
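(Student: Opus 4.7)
My plan is to exhibit an explicit chain-level cycle $\tilde z$ in the reduced \'Swi\k{a}tkowski complex representing the theta class, and then read off $\psi(\tilde z)$ using the formula of Proposition~\ref{prop:vertex explosion}(2). Write $\thetagraph{4}'$ for $\thetagraph{4}$ with the edge $e_1$ subdivided at $v$ into $e_1'$ (adjacent to $u_0$) and $e_1''$ (adjacent to $u_\infty$). In $\fullyreduced{\thetagraph{4}'}$ I use the reduced generators $A_j' := h_j^0 - h_{1'}^0$ at $u_0$, $B_j' := h_j^\infty - h_{1''}^\infty$ at $u_\infty$, and $C := h_{1'}^v - h_{1''}^v$ at $v$ (where $h_i^\bullet$ denotes the half-edge at the indicated vertex associated to edge $e_i$), together with the cycles $\alpha_{1jk}' := (e_1''-e_k)B_j' - (e_1''-e_j)B_k'$, reduced chain representatives of star classes at $u_\infty$.

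Guided by the kernel of $\delta$ that emerges in the proof of Proposition~\ref{prop:theta class} (the triple $(\alpha_{134}, -\alpha_{124}, \alpha_{123})$ witnessing the relation of Lemma~\ref{lem:star}(2)), I would first form the na\"ive lift
\[\tilde z_0 := \alpha_{134}' A_2' - \alpha_{124}' A_3' + \alpha_{123}' A_4'.\]
Because $e_1'$ appears in the reduced basis at $u_0$ while $e_1''$ appears at $u_\infty$, this chain fails to be a cycle; a direct computation with $\partial A_j' = e_j - e_1'$ and $\partial B_j' = e_j - e_1''$ will give
\[\partial \tilde z_0 = (e_1' - e_1'')\eta, \qquad \eta := (e_4-e_3)B_2' + (e_2-e_4)B_3' + (e_3-e_2)B_4',\]
where $\eta$ is the reduced representative of $\alpha_{234}^{u_\infty}$. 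Since $\partial C = e_1' - e_1''$ and $\partial \eta = 0$, the sum $\tilde z := \tilde z_0 - C\eta$ is a cycle of bidegree $(2,3)$.

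Because $\tilde z_0$ involves no half-edge generator at $v$, Proposition~\ref{prop:vertex explosion}(2) reads off $\psi(\tilde z) = -\eta$. The class $\eta = \alpha_{234}^{u_\infty}$ is non-rigid because the tail $e_1''$ forms a disconnected component of $(\thetagraph{v})_{u_\infty}$; by the $\theta$-relation applied to the subgraph $\thetagraph{3} \subset \thetagraph{v}$ on $\{e_2, e_3, e_4\}$, it agrees up to sign with the non-rigid star class $\alpha_{234}^{u_0}$. These are the only non-rigid star classes at either essential vertex, so their common value is the unique non-rigid star class up to sign, as claimed.

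The main obstacle I anticipate is verifying that $\tilde z$ represents a \emph{generator} of $H_2(B_3(\thetagraph{4})) \cong \mathbb{Z}$, rather than a proper multiple. I would address this by cross-checking with the $u_0$-explosion used in the proof of Proposition~\ref{prop:theta class}: applying the corresponding $\psi$ map to $\tilde z$ should recover the triple $(\alpha_{134}, -\alpha_{124}, \alpha_{123})$ that was shown there to generate $\ker \delta$. Alternatively, one may argue directly that $\eta$ is primitive in $H_1(B_2(\thetagraph{v}))$ using the Farley--Sabalka basis underlying Lemma~\ref{lem:star}, forcing the image of $\psi$ to be generated by $\eta$.
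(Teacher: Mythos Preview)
Your proof is correct and takes a genuinely different route from the paper. The paper argues abstractly: the $\theta$-relation places the non-rigid star class $\alpha$ in $\ker(\delta)$; exactness together with Proposition~\ref{prop:theta class} then forces $\alpha = n\psi(\tau)$ for some nonzero integer $n$; and running the same argument with $\mathbb{F}_p$-coefficients for each prime $p$ shows $p\nmid n$, whence $n=\pm 1$. No explicit cycle is ever written down.

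Your approach instead builds the generating cycle by hand in the subdivided complex and reads off $\psi$ directly. The computation $\partial\tilde z_0=(e_1'-e_1'')\eta$ is correct (it amounts to the chain-level identity $\alpha_{234}=\alpha_{134}-\alpha_{124}+\alpha_{123}$ combined with the relation of Lemma~\ref{lem:star}(2)), and the correction by $C\eta$ is exactly right. Your first proposed method for checking that $[\tilde z]$ is a generator---applying $\psi$ for the $u_0$-explosion and recovering the triple $(\alpha_{134},-\alpha_{124},\alpha_{123})$---works cleanly: that triple is visibly primitive in $H_1(B_2(\stargraph{4}))^{\oplus 3}$, and since the reduced complex of a star graph is concentrated in degrees at most one, the map $\psi_{u_0}$ is injective. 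Your second method would also succeed but requires first checking that the map $H_2(B_3(\thetagraph v))\to H_2(B_3(\thetagraph 4))$ vanishes, which is less immediate than you suggest.

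What each approach buys: the paper's field-coefficient trick sidesteps all chain manipulation and is essentially two lines. Your approach is more laborious but yields an explicit cycle representing the theta class as a byproduct; indeed, your generation check via $\psi_{u_0}$ exactly parallels the paper's separate verification in Lemma~\ref{lem:theta class} that the chain $A_2$ of Construction~\ref{construction:theta class} generates. In effect you have merged the content of Lemmas~\ref{lem:theta les} and~\ref{lem:theta class} into a single computation carried out at the bivalent vertex rather than at an essential one.
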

\begin{proof}
Denoting the star and theta classes in question by $\alpha$ and $\tau$, respectively, the $\theta$-relation implies that $\alpha\in\ker(\delta)$. Thus, by exactness and Proposition \ref{prop:theta class}, we conclude that $\alpha=n\psi(\tau)$ for some $0\neq n\in\mathbb{Z}$. The exact sequence in question is valid (and $\alpha$ nonzero) over any field, and carrying out the same argument over $\mathbb{F}_p$ shows that $p\nmid n$ for every prime $p$, whence $n=\pm 1$.
\end{proof}

For the sake of completeness, we now give an explicit chain level representative for the generator of $H_2(B_3(\thetagraph{4}))$, although we will not have cause to use it.

\begin{construction}\label{construction:theta class} Denote the vertices of $\thetagraph{4}$ as $v_1$ and $v_2$, and write $h_{i,j}$ for the half-edge incident on $v_i$ and $e_j$. Define an element $A_2\in \widetilde{S}_2(\thetagraph{4})_3$ by the formula 
 \[A_2=\sum_{\sigma\in \Sigma_{4}}\sgn(\sigma)(e_1-e_{\sigma(3)})(h_{1,\sigma(1)}-h_{1,1})(h_{2,\sigma(2)}-h_{2,1}).\]
\end{construction}

\begin{lemma}\label{lem:theta class}
The chain $A_2$ is a cycle, and $[A_2]$ generates $H_2(B_3(\thetagraph{4}))$. 
\end{lemma}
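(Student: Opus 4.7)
The plan is to verify both assertions directly from the explicit formula, using the exact sequence of Proposition \ref{prop:vertex explosion}.

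First, I would check $\partial A_2 = 0$ by a Leibniz computation. Since edges are closed and $\partial(h_{i,j} - h_{i,1}) = e_j - e_1$, a summand indexed by $\sigma$ contributes to $A_2$ only when $\sigma(4) = 1$ (otherwise one factor already vanishes), so only six permutations matter. The differential of such a term splits into a contribution $(e_1 - e_{\sigma(3)})(e_{\sigma(1)} - e_1)(h_{2,\sigma(2)} - h_{2,1})$ supported at $v_2$ and a contribution $-(e_1 - e_{\sigma(3)})(e_{\sigma(2)} - e_1)(h_{1,\sigma(1)} - h_{1,1})$ supported at $v_1$. Within the first family, fixing $\sigma(2)$ and pairing terms under the involution $\sigma(1) \leftrightarrow \sigma(3)$ flips $\sgn(\sigma)$ while producing the edge polynomial $(e_1 - e_c)(e_a - e_1) - (e_1 - e_a)(e_c - e_1)$, which vanishes identically. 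The analogous pairing $\sigma(2) \leftrightarrow \sigma(3)$ annihilates the second family.

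For the generation claim, I would apply the map $\psi$ obtained from Proposition \ref{prop:vertex explosion} by exploding $v_1$ with privileged half-edge $h_{1,1}$. By the very exact sequence used in the proof of Proposition \ref{prop:theta class}, $\psi$ embeds $H_2(B_3(\thetagraph{4}))$ as the rank-one subgroup $\ker(\delta) \subseteq H_1(B_2(\stargraph{4}))^{\oplus 3}$. For each $j \in \{2,3,4\}$, I would extract the coefficient of $(h_{1,j} - h_{1,1})$ in $A_2$ and match it against the alternative chain-level expression $(e_1 - e_q)(h_p - h_1) - (e_1 - e_p)(h_q - h_1)$ for the star class $\alpha_{1pq}$ at $v_2$; this identifies the $j$-component of $\psi(A_2)$ with $\pm\alpha_{1pq}$, where $\{p, q\} = \{2,3,4\} \setminus \{j\}$. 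By Lemma \ref{lem:star}(1), this is a free basis element of its summand.

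Consequently $\psi(A_2)$ has a coordinate equal to $\pm$ a basis element of the ambient free abelian group $H_1(B_2(\stargraph{4}))^{\oplus 3}$; it is therefore primitive, and so generates the rank-one subgroup $\ker(\delta) = \psi(H_2(B_3(\thetagraph{4})))$. Injectivity of $\psi$ then gives that $[A_2]$ generates $H_2(B_3(\thetagraph{4}))$. The main obstacle is keeping signs consistent throughout — both in the pairwise cancellations that prove the cycle condition and in the identification of each component of $\psi(A_2)$ with a signed star class in the chosen basis — but nothing conceptual is at stake.
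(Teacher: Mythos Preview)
Your proposal is correct and follows essentially the same approach as the paper: both verify the cycle condition by Leibniz and a pairing under the transpositions $\tau_{13}$ and $\tau_{23}$, and both apply the map $\psi$ from vertex explosion and identify each component with a signed star class. The only cosmetic differences are that the paper explodes $v_2$ rather than $v_1$, and that the paper computes all three components and matches them against the relation of Lemma~\ref{lem:star}(2), whereas you use the cleaner observation that a single component equal to $\pm\alpha_{1pq}$ already forces primitivity in the ambient rank-$9$ group, hence in the rank-one subgroup $\ker(\delta)$.
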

\begin{proof}
For the first claim, consideration of the transpositions $\tau_{13}$ and $\tau_{23}$ shows that all terms cancel in the sum \[\text{\small{$\partial A_2=\sum_{\sigma\in \Sigma_{4}}\sgn(\sigma)(e_1-e_{\sigma(3)})\left[(e_{\sigma(1)}-e_1) (h_{2,\sigma(2)}-h_{2,1})-(e_{\sigma(2)}-e_1)(h_{1,\sigma(1)}-h_{1,1})\right].$}}\] For the second claim, we privilege the half-edge $h_{2,1}$ and consider the long exact sequence arising from explosion of the vertex $v_2$, calculating that \begin{align*}
\psi([A_2])_{h_{2,4}}&=\left[\sum_{\sigma(2)=4}\sgn(\sigma)(e_1-e_{\sigma(3)})(h_{1,\sigma(1)}-h_{1,1})\right]\\
&=\left[\sum_{\sigma(2)=4,\,\sigma(4)=1}\sgn(\sigma)(e_1-e_{\sigma(3)})(h_{1,\sigma(1)}-h_{1,1})\right]\\
&=\alpha_{123}.
\end{align*} Similarly, we have $\psi([A_2])_{h_{2,2}}=\alpha_{134}$ and $\psi([A_2])_{h_{2,3}}=-\alpha_{124}$. It follows from the calculation made in the proof of Proposition \ref{prop:theta class} that $\psi([A_2])$ generates $\ker(\delta)$, implying the claim.
\end{proof}

\begin{remark}
One shows easily that $(e_i-e_j)[A_2]$ is a difference of two external products of star classes. This analogue of the $Q$-relation implies that, modulo tori, the action of $\mathbb{Z}[E]$ on a theta class factors through the quotient identifying $e_i$ for $1\leq i\leq 4$, implying that the graded dimension of the $\mathbb{Z}[E]$-submodule generated by a theta class grows at a strictly slower rate than $\dim H_2(B_k(\graf))$ (see \cite[Thm. 1.2]{AnDrummondColeKnudsen:ESHGBG}). Combined with Theorem \ref{thm:main}, this observation provides a hands-on verification of the degree 2 planar case of \cite[Thm. 1.1]{AnDrummondColeKnudsen:AHGBG}, which asserts that toric classes always dominate in the limit $k\to \infty$.
\end{remark}

\begin{remark}
The star and theta classes are the first two examples of a uniform construction outputting an element of $H_n(B_{n+1}(\completegraph{n,n+2}))$ for every $n>0$. We defer a systematic study of these classes to future work.
\end{remark}

\section{Decomposing graphs}\label{section:decomposing graphs}

In this section, we discuss two methods of reducing the complexity of a graph, which form the basis for various inductive arguments to come. The first technique involves the removal of a subset of vertices, while the second involves replacing a subgraph by an edge. The first is drawn from classical graph theory, and the second is discussed at greater leisure in \cite{AnDrummondColeKnudsen:ESHGBG}.

\subsection{Connectivity and cuts} Classical connectivity theory for graphs only behaves well after restricting to combinatorially well-behaved classes of graphs---simplicial graphs, for example. According to our conventions, not all graphs are simplicial complexes, since we allow self-loops and multiple edges, but a simple device will allow us to circumvent this difficulty.

\begin{definition}
A \emph{minimal simplicial model} for $\graf$ is a simplicial graph $\graf_\Delta$ homeomorphic to $\graf$ such that any smoothing with source $\graf_\Delta$ and simplicial target is an isomorphism.
\end{definition}

In view of the following standard result, we typically (and abusively) refer to $\graf_\Delta$ as \emph{the} minimal simplicial model.

\begin{proposition}\label{prop:simplicial model}
Every graph admits a minimal simplicial model, which is unique up to isomorphism.
\end{proposition}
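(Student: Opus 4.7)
The plan is to prove existence by an explicit subdivision recipe and then deduce uniqueness from the combinatorial rigidity of the construction.

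For existence, I would begin with $\graf$ and apply the following subdivisions: for each self-loop at a vertex $v$, add two bivalent vertices, turning the loop into a triangle based at $v$; and for each maximal set of $n \geq 2$ edges incident on the same unordered pair of distinct vertices, choose one edge to leave intact and subdivide each of the remaining $n-1$ edges exactly once. The resulting graph $\graf_\Delta$ is homeomorphic to $\graf$ and, by construction, has the property that every edge has two distinct endpoints and no two edges have the same pair of endpoints; in particular, $\graf_\Delta$ is simplicial. It is also minimal: any non-identity smoothing removes some bivalent vertex, and in each case (removing one or both of the two bivalent vertices on a formerly self-looped triangle, or removing the unique bivalent vertex on a subdivided edge in a parallel class) the result contains either a self-loop or a pair of parallel edges and is therefore not simplicial.

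For uniqueness, suppose $\graf_1$ and $\graf_2$ are both minimal simplicial models. They share a common underlying topological space, whose local structure is an invariant: the locus of points of local valence $\geq 3$ and the locus of endpoints are intrinsic. Since smoothings eliminate only bivalent vertices, these loci must be vertices of any simplicial model; in particular, the essential and valence-$1$ vertex sets of $\graf_1$ and $\graf_2$ coincide. The remaining vertices of each model are bivalent and lie in the interiors of topological edges of $\graf$. Simpliciality forces at least two such vertices on each topological self-loop and at least one on all but one of the edges in each maximal parallel class; minimality forces these bounds to be attained exactly. The resulting combinatorial structure, and hence each of $\graf_1$ and $\graf_2$, is then determined up to isomorphism by these integer counts.

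The subtlety I expect to require the most care is the treatment of connected components that are topologically circles but contain no essential or valence-$1$ vertices. For such a component the anchoring by intrinsic points used in the main uniqueness argument is vacuous, and one must argue separately that the minimal simplicial model is a triangle, which is unique up to isomorphism as an abstract graph.
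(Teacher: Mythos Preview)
Your existence argument has a gap. You begin with $\graf$ and only \emph{add} bivalent vertices, but $\graf$ may already contain bivalent vertices that are redundant for simpliciality. For instance, if $\graf$ is a path on five vertices, your recipe leaves it unchanged; yet smoothing any interior vertex yields a shorter path that is still simplicial, so your $\graf_\Delta$ is not minimal. Your minimality check (``in each case\ldots'') only examines the bivalent vertices you introduced and tacitly assumes these are the only bivalent vertices present. The fix is exactly the first step in the paper's concrete description following the proposition: before subdividing, smooth all bivalent vertices of $\graf$ (equivalently, pass to the underlying topological space rather than the given CW structure, as you in fact do in your uniqueness argument).

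The paper itself does not supply a proof of this proposition, treating it as standard; the sentence following the statement records essentially your construction, but with this smoothing step prepended. Your uniqueness argument via the intrinsic locus of points of valence different from two is sound, and your handling of the circle-component case is correct.
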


Concretely, as long as $\graf$ has no component homeomorphic to $S^1$, the minimal simplicial model $\graf_\Delta$ may be obtained by the following three-step process: first, smooth all bivalent vertices of $\graf$; second, add a bivalent vertex to each self-loop of the resulting graph; third, add a bivalent vertex to all but one of each set of multiple edges of the resulting graph.

\begin{definition}
A non-singleton graph $\graf$ is (topologically) $k$-\emph{connected} if any two distinct vertices of $\graf_\Delta$ may be joined by $k$ paths pairwise disjoint away from the endpoints. By convention, the singleton graph is $1$-connected but not $k$-connected for any $k>1$.
\end{definition}

In the special cases $k=1,2,3$, we at times use the respective terms \emph{connected}, \emph{biconnected}, and \emph{triconnected}.

As illustrated by the classical theorem due to Menger \cite{Menger:SAK}, connectivity is intimately related to the concept of a cut in a graph.

\begin{definition}
A $k$-\emph{cut} in $\graf$ is a subset $S\subseteq V$ of cardinality $k$ such that the complement of the open star of $S$ has at least two connected components, each containing a vertex. If $S$ is a $k$-cut, an $S$-\emph{component} of $\graf$ is the closure in $\graf$ of a connected component of $\graf\setminus S$. 
\end{definition}

The set of $k$-cuts of a graph depends crucially on the combinatorial structure; for example, any tail may be subdivided to contain a bivalent $1$-cut. This pathology cannot occur in $\graf_\Delta$, so we define $N_1(\graf)$ to be one plus the number of $1$-cuts in $\graf_\Delta$, where $\graf$ is a connected graph. 

\begin{observation}
The parameter $N_1(\graf)$ has the following properties.
\begin{enumerate}
\item If $N_1(\graf)>1$, then $\graf_\Delta$ admits a $1$-cut $\{x\}$ such that $N_1(\graphfont{\Delta})<N_1(\graf)$ for every $\{x\}$-component $\graphfont{\Delta}$.
\item The equality $N_1(\graf)=1$ holds if and only if $\graf$ is biconnected, a singleton, or homeomorphic to $\completegraph{1,1}$.
\end{enumerate}
\end{observation}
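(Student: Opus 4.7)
The plan is to treat the two parts essentially independently, using classical graph-theoretic reasoning about cut vertices. The underlying observation throughout is that $N_1(\graf)-1$ counts exactly the essential (valence $\geq 3$) topological cut points of $\graf_\Delta$: every bivalent vertex of $\graf_\Delta$ is forced by a self-loop or a pair of multiple edges and so cannot separate the graph, and leaves are clearly not cuts. So the count is invariant under passing between the topological and combinatorial notions.

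For part (1), I would actually try to prove the stronger statement that \emph{every} 1-cut $\{x\}$ of $\graf_\Delta$ satisfies the conclusion. Fix such an $x$ and any $\{x\}$-component $\graphfont{\Delta}$. The target inequality $N_1(\graphfont{\Delta})<N_1(\graf)$ will follow once I show that the essential cut vertices of $\graphfont{\Delta}_\Delta$ inject into those of $\graf_\Delta$ while missing $x$. Three local observations suffice: (i) $x$ is not a topological cut point of $\graphfont{\Delta}$, since by definition $\graphfont{\Delta}\setminus\{x\}$ is a single connected component of $\graf\setminus\{x\}$; (ii) any cut point $y\neq x$ of $\graphfont{\Delta}$ remains a cut point of $\graf$, because a component of $\graphfont{\Delta}\setminus\{y\}$ not containing $x$ is separated from $x$ in $\graf\setminus\{y\}$ as well, as the only way out of $\graphfont{\Delta}$ in $\graf$ is through $x$; and (iii) the valence at $y$ is preserved in passing to $\graphfont{\Delta}$, since all half-edges at $y$ lie on the same side of $x$ as $y$ does.

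For part (2), this reduces to the classical characterization of the absence of cut vertices in a connected simplicial graph. When $\graf_\Delta$ has at least three vertices, having no cut vertex is precisely the Menger-style biconnectivity used in the definition. When $\graf_\Delta$ has at most two vertices, the only connected simplicial options are the singleton, which has no cut vertex trivially, and $\completegraph{1,1}$, which has no cut vertex but is not biconnected because there is only one internally disjoint path between its two endpoints. Passing back to $\graf$ through the homeomorphism $\graf\cong\graf_\Delta$ yields the claimed trichotomy.

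The main obstacle is step (ii) in part (1): ruling out detours in $\graf$ that could reconnect a component of $\graphfont{\Delta}\setminus\{y\}$ to $x$ via other $\{x\}$-components. The crux is that the topological boundary of $\graphfont{\Delta}$ in $\graf$ equals $\{x\}$, so any purported detour must enter and exit $\graphfont{\Delta}$ through $x$ itself; but once the path reaches $x$ it has already connected to $x$ within $\graphfont{\Delta}\setminus\{y\}$, contradicting the hypothesis that $y$ separates the component in question from $x$ inside $\graphfont{\Delta}$.
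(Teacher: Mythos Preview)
The paper records this statement as an \emph{Observation} and offers no proof; it is treated as an elementary fact about cut vertices and left to the reader. Your argument is therefore not being compared against anything, but it is correct, and in part (1) you in fact establish the stronger claim that \emph{every} 1-cut of $\graf_\Delta$ has the required property, not merely some 1-cut. The key points you identify---(i) $x$ is not a cut point of any $\{x\}$-component, (ii) cut points $y\neq x$ of $\graphfont{\Delta}$ remain cut points of $\graf$ because the only exit from $\graphfont{\Delta}$ is through $x$, and (iii) valences away from $x$ are unchanged---are exactly what is needed, and together they give an injection from the 1-cuts of $\graphfont{\Delta}_\Delta$ into the 1-cuts of $\graf_\Delta$ missing $x$, hence $N_1(\graphfont{\Delta})\leq N_1(\graf)-1$. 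Your treatment of part (2) via the vertex count of $\graf_\Delta$ is likewise standard and correct.
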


As we shall see, the parameter $N_1(\graf)$ is useful in inductive arguments. We shall also make use of an analogue in the case $k=2$, for which we require an auxiliary definition.

\begin{definition}
Let $\{x,y\}$ be a $2$-cut in $\graf$ and $\graphfont{\Delta}$ an $\{x,y\}$-component. The \emph{completion} of $\graphfont{\Delta}$ is the graph $\overline{\graphfont{\Delta}}$ obtained from $\graphfont{\Delta}$ by adding an edge $e_{xy}$ joining $x$ and $y$.
\end{definition}

The following result is a consequence of the combinatorial decomposition theory of \cite{CunninghamEdmonds:CDT}, the details of which we elide.

\begin{theorem}[Cunningham--Edmonds]\label{thm:decomposition theory}
There is a parameter $N_2(\graf)$ associated to a biconnected graph $\graf$ with the following properties.
\begin{enumerate}
\item If $N_2(\graf)>1$, then $\graf_\Delta$ admits a $2$-cut $\{x,y\}$ such that $N_2(\overline{\graphfont{\Delta}})<N_2(\graf)$ for every $\{x,y\}$-component $\graphfont{\Delta}$.
\item The equality $N_2(\graf)=1$ holds if and only if $\graf$ is triconnected, a cycle, or homeomorphic to $\thetagraph{n}$ for some $n\geq 3$.
\end{enumerate}
\end{theorem}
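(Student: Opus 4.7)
The strategy is to extract $N_2$ from the canonical tree decomposition of biconnected graphs due to Cunningham and Edmonds (essentially the SPQR-tree of computer-science folklore). Given biconnected $\graf$, that theory produces a tree $T(\graf_\Delta)$ whose nodes are labeled by graphs of three distinguished types---triconnected graphs, cycles of length at least $3$, and graphs homeomorphic to $\thetagraph{n}$ for some $n\geq 3$---and whose tree-edges are labeled by ``virtual edges'' encoding the $2$-cuts of $\graf_\Delta$. I would define $N_2(\graf):=|V(T(\graf_\Delta))|$.

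Property (2) is then immediate from the definition: $N_2(\graf)=1$ says that $T(\graf_\Delta)$ consists of a single node, which by the classification of nodes is precisely one of the three graphs listed in the statement. Conversely, if $\graf_\Delta$ is triconnected, a cycle, or a theta graph, then it is indecomposable in the sense of Cunningham-Edmonds and its tree is trivial.

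For property (1), assume $N_2(\graf)>1$ and choose a virtual edge of $T(\graf_\Delta)$ incident on a leaf; let $\{x,y\}$ be the associated $2$-cut of $\graf_\Delta$. The key structural input from Cunningham-Edmonds is that each $\{x,y\}$-component $\graphfont{\Delta}$ of $\graf_\Delta$ corresponds to a specific branch of $T(\graf_\Delta)$ obtained by cutting at the virtual edges representing $\{x,y\}$, and that the decomposition tree $T(\overline{\graphfont{\Delta}})$ is canonically identified with that branch (with the relevant virtual edge promoted to the newly added edge $e_{xy}$ in the completion). Each such branch is a proper subtree of $T(\graf_\Delta)$, so $N_2(\overline{\graphfont{\Delta}})<N_2(\graf)$ as required.

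The main subtlety I expect is the bookkeeping at P-node (bond-type) $2$-cuts, where a single topological $2$-cut $\{x,y\}$ of $\graf_\Delta$ can correspond to several virtual edges of $T(\graf_\Delta)$ all sharing the endpoint pair $\{x,y\}$; correspondingly, $\graf_\Delta\setminus\{x,y\}$ produces more than two components, one for each virtual edge of the P-node. Verifying that the bijection between $\{x,y\}$-components and proper subtrees still holds in this case, and that completion of each component corresponds to promoting the relevant virtual edge to an honest edge, is the main substantive work beyond citing Cunningham-Edmonds. Everything else amounts to translating between the combinatorial language of the decomposition theorem and the topological language of $2$-cuts and completions used in Definition preceding the statement.
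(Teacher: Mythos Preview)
Your overall strategy---extract $N_2$ from the Cunningham--Edmonds/Tutte decomposition tree---is exactly what the paper has in mind; indeed, the paper gives no proof at all, simply citing \cite{CunninghamEdmonds:CDT} and stating that ``the details of which we elide.'' So at the level of approach you are aligned with the authors.

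There is, however, a genuine technical slip in your definition $N_2(\graf)=|V(T(\graf_\Delta))|$. Since $\graf_\Delta$ is by construction a \emph{simple} graph, its SPQR tree can never consist of a single P-node: a P-node is a bond with parallel edges, and a simple graph has none. Concretely, take $\graf=\thetagraph{3}$. Its minimal simplicial model $\graf_\Delta$ is $\completegraph{4}$ minus an edge, whose unique $2$-cut is the pair $\{u,v\}$ of essential vertices; the SPQR tree has one P-node (the triple bond on $\{u,v\}$) flanked by two S-node triangles, for a total of three nodes. Thus your parameter gives $N_2(\thetagraph{3})=3$, whereas property (2) of the theorem demands $N_2(\thetagraph{3})=1$. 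The same happens for every $\thetagraph{n}$ with $n\geq 3$, where your definition yields $N_2=n$. Your argument for property (2) breaks precisely at the sentence ``if $\graf_\Delta$ is \ldots\ a theta graph, then it is indecomposable''---$\graf_\Delta$ is simple, so it is never literally a theta graph.

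The fix is straightforward: compute the decomposition tree not on $\graf_\Delta$ but on the multigraph obtained by smoothing all bivalent vertices of $\graf$ (handling the cycle case by fiat). On that model, $\thetagraph{n}$ is a single bond and the tree has one node, as required. Your argument for property (1), including the P-node bookkeeping you flag, then goes through essentially as written; the $2$-cut you produce lives among the essential vertices and is therefore still a $2$-cut of $\graf_\Delta$.
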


\subsection{Surgery} In most circumstances, configuration spaces only enjoy functoriality for continuous injections. As we now explain, extra functoriality is available at the level of homology for configuration spaces of graphs.

\begin{definition}
Let $\graphfont{\Delta}\subseteq\graf$ be a connected subgraph with two distinct distinguished vertices $\{x,y\}$. The result of \emph{surgery on $\graf$ along $\graphfont{\Delta}$} (using the vertices $\{x,y\}$) is the graph obtained by replacing $\graphfont{\Delta}$ with a single edge $e_{xy}$ connecting $x$ and $y$. 
\end{definition}

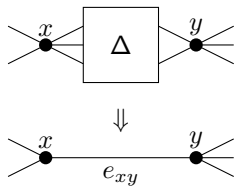
\begin{figure}[ht]
\begin{center}
\begin{tikzpicture}
\begin{scope}[xshift=6.5cm]
\begin{scope}[xshift=0cm]
\fill[black] (0,.5) circle (2.5pt) node[above] {$x$};
\draw(0,.5) -- (-.5,.25);
\draw(0,.5) -- (-.5,.75);
\draw(0,.5) -- (.5,.25);
\draw(0,.5) -- (.5,.5);
\draw(0,.5) -- (.5,.75);
\draw (.5,0) rectangle (1.5,1);
\fill[black] (2,.5) circle (2.5pt) node[above] {$y$};
\draw(2,.5) -- (1.5,.25);
\draw(2,.5) -- (1.5,.75);
\draw(2,.5) -- (2.5,.25);
\draw(2,.5) -- (2.5,.5);
\draw(2,.5) -- (2.5,.75);
\draw(1,.5)node{$\graphfont{\Delta}$};
\end{scope}
\draw(1,-.5)node{$\Downarrow$};
\begin{scope}[yshift=-1.5cm]
\fill[black] (0,.5) circle (2.5pt) node[above] {$x$};
\draw(0,.5) -- (-.5,.25);
\draw(0,.5) -- node[midway,below] {$e_{xy}$} (2,.5);
\draw(0,.5) -- (-.5,.75);
\fill[black] (2,.5) circle (2.5pt) node[above] {$y$};
\draw(2,.5) -- (2.5,.25);
\draw(2,.5) -- (2.5,.75);
\draw(2,.5) -- (2.5,.5);
\end{scope}
\end{scope}
\end{tikzpicture}
\end{center}
\caption{Depiction of a surgery}\label{figure: surgeries}
\end{figure}

Although a surgery is not a graph morphism in the sense given above, we nevertheless think of it as a type of morphism, writing $\sigma:\graf\dashrightarrow\graf'$ when $\graf'$ is the result of surgery on $\graf$. 

\begin{example}\label{example:completion surgery}
Given a $2$-cut $\{x,y\}$ in a biconnected graph $\graf$, there is a canonical sugery onto the completion of any $\{x,y\}$-component of $\graf$.
\end{example}

Our notation is justified by a certain non-obvious functoriality. Note that, given a path from $x$ to $y$ in $\graphfont{\Delta}$, there results a non-canonical topological embedding $\iota:\graf'\to \graf$, called a \emph{section} of the surgery $\sigma$.

\begin{proposition}\label{prop:surgery is a thing}
Let $\sigma:\graf\dashrightarrow\graf'$ be a surgery. There is a canonical map of bigraded Abelian groups $\sigma_*:H_*(B(\graf))\to H_*(B(\graf'))$ that is a retraction of $\iota_*$ for any section $\iota:\graf'\to \graf$. Moreover, $\sigma_*$ is compatible with edge stabilization via the ring homomorphism \[\sigma_*(e)=\begin{cases}
e&\quad e\notin E(\graphfont{\Delta})\\
e_{xy} &\quad e\in E(\graphfont{\Delta}).
\end{cases}\]
\end{proposition}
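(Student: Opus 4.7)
The plan is to construct $\sigma_*$ by an explicit chain map at the level of the Świątkowski complex, then verify the stated properties.

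\textbf{Construction.} I define a $\mathbb{Z}[E(\graf)]$-linear chain map $\sigma^\sharp: \intrinsic{\graf} \to \intrinsic{\graf'}$, where $\intrinsic{\graf'}$ is regarded as a $\mathbb{Z}[E(\graf)]$-module via the stated ring homomorphism, by prescribing $\sigma^\sharp$ factor by factor in the tensor decomposition. On $S(v)$ for $v \notin V(\graphfont{\Delta})$ the map is the identity; on $S(x)$ it sends every half-edge generator corresponding to a half-edge of $\graphfont{\Delta}$ at $x$ to the half-edge $h_x$ of $e_{xy}$ at $x$, and fixes all other generators, with an analogous prescription on $S(y)$; for each interior vertex $v \in V(\graphfont{\Delta}) \setminus \{x, y\}$, the factor $S(v)$ is absorbed into $\mathbb{Z}[e_{xy}] \subset \mathbb{Z}[E(\graf')]$ via $\varnothing \mapsto 1$, $v \mapsto e_{xy}$, and $h \mapsto 0$ for every $h \in H(v)$. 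The chain-map property is a case-by-case check using $\partial(h) = e(h) - v(h)$; the key case is an interior half-edge $h$, for which $e(h)$ and $v(h)$ both map to $e_{xy}$, matching $\partial \sigma^\sharp(h) = 0$. The boundary case at $x$ reduces to the identity $\partial h_x = e_{xy} - x$.

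\textbf{Retraction and canonicity.} A section $\iota: \graf' \to \graf$ is determined by a choice of path $\gamma \subset \graphfont{\Delta}$ from $x$ to $y$. After subdividing $e_{xy}$ to match $\gamma$, the section realizes $\graf'$ as a subcomplex of $\graf$; combined with the smoothing quasi-isomorphism relating the subdivided and original $\graf'$, this yields a chain-level map $\iota^\sharp$, well-defined up to quasi-isomorphism. A direct computation shows that $\sigma^\sharp \circ \iota^\sharp$ is the identity on homology: on $\mathbb{Z}[E(\graf')]$ and the tensor factors away from $\gamma$, both maps are identities; along $\gamma$, the data introduced by $\iota^\sharp$ at the new interior vertices is precisely what $\sigma^\sharp$ is designed to collapse back to the edge $e_{xy}$. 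Since $\sigma^\sharp$ itself involves no choices other than $\graphfont{\Delta}$ and $\{x,y\}$, the induced map $\sigma_*$ is canonical, and compatibility with edge stabilization is built into the ring-level formula by construction.

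The main obstacle is the clean handling of $\iota^\sharp$ at the chain level, since the section involves combinatorial smoothings along $\gamma$; formulating $\iota^\sharp$ and verifying that the composition $\sigma^\sharp \circ \iota^\sharp$ is the identity up to quasi-isomorphism requires invoking the naturality of the Świątkowski complex under graph morphisms established in \cite[\S 2.3]{AnDrummondColeKnudsen:ESHGBG}. With that machinery in hand, the remaining verifications are essentially formal manipulations of the explicit tensor formulas defining $\sigma^\sharp$.
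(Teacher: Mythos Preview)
Your proposal is correct and follows essentially the same approach as the paper: the paper defers to \cite[Lem.~4.17]{AnDrummondColeKnudsen:ESHGBG} and then describes exactly the chain map you construct---edges and interior vertices of $\graphfont{\Delta}$ sent to $e_{xy}$, half-edges of $\graphfont{\Delta}$ at $x$ or $y$ sent to the corresponding half-edge of $e_{xy}$, interior half-edges annihilated, everything else fixed. Your explicit verification of the chain-map condition and of the retraction property fills in details that the paper leaves to the cited reference, but the underlying construction is identical.
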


This result is a consequence of the proof of \cite[Lem. 4.17]{AnDrummondColeKnudsen:ESHGBG}, where it is shown that $\sigma_*$ is induced by a map $\intrinsic{\sigma}$ at the level of \'{S}wi\k{a}tkowski complexes. Concretely, the map $\intrinsic{\sigma}$ sends edges of $\graphfont{\Delta}$ and vertices of $\graphfont{\Delta}$ different from $x$ and $y$ to $e_{xy}$; sends half-edges of $\graphfont{\Delta}$ incident on $x$ to the unique half-edge of $e_{xy}$ incident on $x$ (resp. $y$); annihilates all other half-edges of $\graphfont{\Delta}$; and acts as the identity on vertices, edges, and half-edges not lying in $\Delta$.

We close with two results concerning the difference between a homology class $\alpha$ and its modification $\iota_*\sigma_*(\alpha)$ after surgery. In both statements, we begin with a decomposition $\graf=\graphfont{\Delta}\cup \graphfont{\Delta}'$, where $\graphfont{\Delta}$ and $\graphfont{\Delta}'$ are subgraphs with $\graphfont{\Delta}$ connected.

\begin{proposition}\label{prop:very basic surgery}
Suppose that $\graphfont{\Delta}$ and $\graphfont{\Delta'}$ intersect in the single vertex $x$, choose a vertex $x\neq y\in \graphfont{\Delta}$ arbitrarily, and let $\sigma:\graf\dashrightarrow \graf'$ denote the resulting surgery along $\graphfont{\Delta}$. Fix a star cycle $a$, a loop cycle $b$, and $p,q\in\mathbb{Z}[E]$. If the support of $a$ lies $\graphfont{\Delta}'\setminus\{x\}$, then $(1-\iota_*\sigma_*)([pa])=0$ for any section $\iota:\graf'\to\graf$ of $\sigma$ (resp. $b$, $[qb]$).
\end{proposition}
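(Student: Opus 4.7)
The plan is to show that the class $[pa]$ already lies in the image of $\iota_*$, from which $\iota_* \sigma_* [pa] = [pa]$ follows immediately from the retraction identity $\sigma_* \iota_* = \mathrm{id}$ of Proposition \ref{prop:surgery is a thing}. The argument in the loop case is identical to the star case, since only the support hypothesis is used, so we focus on $a$.

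The main technical ingredient is a \emph{sliding lemma}: for any two edges $e, e' \in E(\graphfont{\Delta})$ sharing a common vertex $v$ and any $r \in \mathbb{Z}[E]$, one has $[e \cdot r \cdot a] = [e' \cdot r \cdot a]$. Indeed, the support of $a$ lies in $\graphfont{\Delta}' \setminus \{x\}$ while $v \in \graphfont{\Delta}$, so the chain $r \cdot a$ carries the generator $\varnothing$ at $v$, and we may multiply by the half-edges $h, h' \in H(v)$ with $e(h) = e$ and $e(h') = e'$. Since $r \cdot a$ is a cycle, the Leibniz rule gives
\[ \partial(h \cdot r \cdot a) = (e - v) \cdot r \cdot a, \qquad \partial(h' \cdot r \cdot a) = (e' - v) \cdot r \cdot a, \]
whence $[e \cdot r \cdot a] = [v \cdot r \cdot a] = [e' \cdot r \cdot a]$ in homology.

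The crux is then the handling of the factors of $p$ that involve edges of $\graphfont{\Delta}$, since the image of $\iota$ contains only the single path $\pi \subset \graphfont{\Delta}$. Because $\graphfont{\Delta}$ is connected, any two of its edges can be linked by a finite sequence of edges in which consecutive pairs share a vertex; iterating the sliding lemma, each edge of $p$ lying in $E(\graphfont{\Delta})$ may be replaced, without altering the homology class, by any edge of $\pi$. This yields a polynomial $p'$ supported on $E(\graphfont{\Delta}') \cup E(\pi)$ with $[pa] = [p'a]$. The chain $p'a$ then lies in the subcomplex $\intrinsic{\graphfont{\Delta}' \cup \pi} = \intrinsic{\iota(\graf')}$, exhibiting $[pa]$ as an element of the image of $\iota_*$, as required.
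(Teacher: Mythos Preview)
Your proof is correct and follows essentially the same route as the paper's. Both arguments use the ``sliding'' identity $[e\cdot ra]=[v\cdot ra]=[e'\cdot ra]$, obtained from $\partial(h\cdot ra)=(e(h)-v(h))\cdot ra$, to replace each $E(\graphfont{\Delta})$-factor of $p$ by an edge lying in $\iota(\graf')$, so that the resulting cycle visibly lies in the image of $\iota_*$ and the retraction identity $\sigma_*\iota_*=\mathrm{id}$ finishes. The only difference is the destination of the slide: the paper pushes each $\graphfont{\Delta}$-edge through the vertex $x$ (which is not in the support of $a$) into $\graphfont{\Delta}'$, obtaining a representative that works uniformly for every section, whereas you slide within $\graphfont{\Delta}$ onto the chosen path $\pi=\iota(e_{xy})$, which suffices for the fixed section at hand. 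Your version is more explicit than the paper's one-line proof and avoids any implicit assumption that $x$ has an incident edge in $\graphfont{\Delta}'$.
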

\begin{proof}
Since $x$ does not lie in the support of $a$, we may assume by connectivity of $\graphfont{\Delta}$ that $p$ involves only edges lying in $\graphfont{\Delta}'$, in which case the claim holds by inspection (resp. $b$, $q$).
\end{proof}

\begin{proposition}\label{prop:basic surgery}
Suppose that $\graphfont{\Delta}$ and $\graphfont{\Delta'}$ intersect in the pair of distinct vertices $\{x,y\}$, and let $\sigma:\graf\dashrightarrow \graf'$ denote the resulting surgery along $\graphfont{\Delta}$. Fix a loop cycle $b$ in $\graf$ and $q\in\mathbb{Z}[E]$, and suppose that the support of $b$ does not lie entirely in $\graphfont{\Delta}$. There is a section $\iota:\graf'\to \graf$ of $\sigma$ such that $(1-\iota_*\sigma_*)([qb])$ is a sum of stabilized star classes represented by cycles with support lying in $\graphfont{\Delta}$.
\end{proposition}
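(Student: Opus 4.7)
The plan is to choose the section $\iota$ so that its associated $x$-$y$ path $\gamma = g_1\cdots g_k$ in $\graphfont{\Delta}$ coincides with the intersection of the loop of $b$ with $\graphfont{\Delta}$. After passing to a homologous representative of $b$ if necessary, we may assume this intersection is a single embedded $x$-$y$ path; the general case then follows by the same argument applied to each arc.

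With this choice, the identity $\iota_*\sigma_*(b) = b$ holds at the \'{S}wi\k{a}tkowski chain level, since $\sigma_*$ collapses the portion of $b$ inside $\graphfont{\Delta}$ onto the edge $e_{xy}$ and $\iota_*$ reverses this collapse along the very same path, while both maps act as the identity outside of $\graphfont{\Delta}$. Combined with the compatibility of $\iota_*\sigma_*$ with the edge-stabilization ring homomorphism that fixes $E(\graphfont{\Delta'})$ and sends each $f\in E(\graphfont{\Delta})$ to $\iota_*(e_{xy}) = g_1$, this yields $\iota_*\sigma_*(qb) = \iota_*\sigma_*(q)\cdot b$ at the chain level. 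Consequently, $(1-\iota_*\sigma_*)([qb])$ is represented by a $\mathbb{Z}[E]$-linear combination of classes of the form $[(f-g_1)b]$ with $f\in E(\graphfont{\Delta})$, and it suffices to show that each such class is a sum of stabilized star classes supported at vertices of $\graphfont{\Delta}$.

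For this last step, I would exploit the connectivity of $\graphfont{\Delta}$ to choose a path of edges $g_1 = e_0, e_1, \ldots, e_n = f$ in $\graphfont{\Delta}$ and telescope $(f-g_1) = \sum_i (e_i - e_{i-1})$. When the shared vertex $w_i$ of $e_i$ and $e_{i-1}$ lies on the loop of $b$ with exactly one of the two edges lying on that loop, the $Q$-relation (Lemma~\ref{lem:Q}), applied via functoriality to a lollipop subgraph at $w_i$, realizes $[(e_i - e_{i-1})b]$ as a star class supported at $w_i \in V(\graphfont{\Delta})$. The main obstacle will be handling telescoping steps that fail this format---namely, those in which $w_i$ is off the loop of $b$, or in which both $e_i$ and $e_{i-1}$ lie on the loop. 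In these cases, I would combine the chain-level star-cycle formula at $w_i$ with the $\theta$-relation (Lemma~\ref{lem:theta}) to re-express $[(e_i - e_{i-1})b]$ as a sum of stabilized star classes at loop vertices of $\graphfont{\Delta}$, with the telescoping and the verification that all introduced terms remain supported in $V(\graphfont{\Delta})$ forming the technical heart of the argument.
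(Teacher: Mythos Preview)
Your approach coincides with the paper's: choose the section so that $\iota(e_{xy})$ matches the arc $b\cap\graphfont{\Delta}$, observe that $\iota_*\sigma_*$ fixes $[b]$, and then reduce $q$ edge-by-edge via the $Q$-relation along paths in $\graphfont{\Delta}$. Two points, however, are simpler than you anticipate.

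First, the ``multiple arcs'' worry is unnecessary. Since the support of $b$ is a simple cycle not contained in $\graphfont{\Delta}$, and $\graphfont{\Delta}\cap\graphfont{\Delta}'=\{x,y\}$, the intersection $b\cap\graphfont{\Delta}$ is automatically either a subset of $\{x,y\}$ or a single $x$--$y$ arc; no homologous replacement is needed. (This is how the paper opens its proof.)

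Second, and more importantly, your ``main obstacle'' cases are not obstacles at all---they contribute zero, and the $\theta$-relation plays no role. If the shared vertex $w_i$ lies off the loop of $b$, then $(h_{e_i}-h_{e_{i-1}})\cdot b$ is a well-defined degree-$2$ chain in $\intrinsic{\graf}$ (the half-edge factor lives at $w_i$, which is disjoint from the vertices appearing in $b$), and its boundary is exactly $(e_i-e_{i-1})b$; hence $[(e_i-e_{i-1})b]=0$. If instead both $e_i$ and $e_{i-1}$ lie on the loop, then $(e_i-e_{i-1})b$ is already a boundary in the \'{S}wi\k{a}tkowski complex of the loop itself (equivalently, $B_2(S^1)\simeq S^1$ forces $e_i[b]=e_{i-1}[b]$), so again the class vanishes. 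Only the mixed case---$w_i$ on the loop with exactly one of $e_i,e_{i-1}$ on the loop---produces a nonzero contribution, and there the $Q$-relation gives a star class supported at $w_i\in V(\graphfont{\Delta})$, as you say. With these simplifications your telescoping argument goes through cleanly and matches the paper's ``by connectivity and the $Q$-relation'' step.
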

\begin{proof}
By assumption, the support of $b$ intersects $\graphfont{\Delta}$ either in a subset of $\{x,y\}$ or in a path from $x$ to $y$. Choose $\iota$ such that this path coincides with $\iota(e_{xy})$ in the latter case and arbitrarily otherwise. 
By inspection, $[qb]=\iota_*\sigma_*([qb])$ in the special case where $q$ involves only edges lying either in $\graphfont{\Delta}'$ or in the support of $b$. By connectivity and the Q-relation, $q$ may be assumed of this form at the cost of introducing star classes with the desired support property.
\end{proof}

\section{Pesky cycles}\label{section:pesky cycles}
In this section, we reformulate Theorem \ref{thm:main} in terms of the surjectivity of a certain map. We then prove Proposition \ref{prop:cokernel generators}, which describes generators for the cokernel of the map in question. These ``pesky'' cycles are the main players in the remainder of the paper, which is devoted to the proof of their vanishing.

\subsection{Reformulation} Given a graph $\graf$, we define $M(\graf)\subseteq H_2(B(\graf))$ to be the $\mathbb{Z}[E]$-submodule generated by theta classes and external products of loop classes and star classes; thus, the conclusion of Theorem \ref{thm:main} is that $M(\graf)=H_2(B(\graf))$. Given a bivalent vertex $w$, we consider the long exact sequence
\[\cdots \to H_2(B_k(\graf_w))\xrightarrow{\iota_*}H_2(B_k(\graf))\xrightarrow{\psi} H_1(B_{k-1}(\graf_w))\xrightarrow{\delta} H_1(B_{k}(\graf_w))\to \cdots\] of Proposition \ref{prop:vertex explosion}. We will prove the following result concerning this sequence.

\begin{theorem}\label{thm:reformulation}
For any connected planar graph $\graf$ and bivalent vertex $w$, the restriction $\psi:M(\graf)\to \ker(\delta)$ is surjective.
\end{theorem}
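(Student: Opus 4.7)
The plan is to reformulate the surjectivity of $\psi|_{M(\graf)}$ as the vanishing of the cokernel
$C(\graf,w) := \ker(\delta)/\psi(M(\graf))$, well-defined because exactness in Proposition \ref{prop:vertex explosion} forces $\psi(H_2(B(\graf)))\subseteq\ker(\delta)$. The first move, and the content of Proposition \ref{prop:cokernel generators}, is to give an explicit short generating set of $C(\graf,w)$: by Proposition \ref{prop:stars and loops generate}, $H_1(B(\graf_w))$ is spanned over $\mathbb{Z}[E]$ by star and loop classes, and after discarding those combinations that visibly arise as $\psi$-images of external products of star, loop, and theta classes on $\graf$ --- using the formula for $\psi$ in Proposition \ref{prop:vertex explosion} and the identification of the $\psi$-image of a theta class in Lemma \ref{lem:theta les} --- what remains is a short list of \emph{pesky} cycles, each $(e(h)-e(h_0))$-torsion and tied to the local combinatorics of the two edges at $w$.

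The core of the argument is a nested induction on combinatorial complexity. The outer induction is on $N_1(\graf)$: if $\graf$ is not biconnected, pick a $1$-cut $\{x\}$ of $\graf_\Delta$ with all $\{x\}$-components of strictly smaller $N_1$, and perform surgery $\sigma:\graf\dashrightarrow\graf'$ collapsing one such component. By Proposition \ref{prop:very basic surgery}, loop and star cycles with support off the cut are fixed by $\iota_*\sigma_*$, so a pesky cycle $c$ on $\graf_w$ decomposes, modulo $\psi(M(\graf))$, as the image (via a section) of a pesky cycle on a strictly simpler graph; naturality of $\psi$ and the fact that graph embeddings preserve loop, star, and theta classes let the inductive hypothesis conclude $c\in\psi(M(\graf))$. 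The inner induction, under the assumption that $\graf$ is biconnected, is on $N_2(\graf)$: the Cunningham--Edmonds decomposition (Theorem \ref{thm:decomposition theory}) produces a $2$-cut $\{x,y\}$, and passing to the completions $\overline{\graphfont{\Delta}}$ of the $\{x,y\}$-components --- combined with Proposition \ref{prop:basic surgery} to track how loop cycles transform modulo star classes supported in $\graphfont{\Delta}$ --- reduces us to the case where $\graf$ is a cycle, a theta graph $\thetagraph{n}$, or triconnected and planar.

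The cycle base case is immediate. For $\graf=\thetagraph{n}$, Lemma \ref{lem:theta les} identifies $\psi$ of a theta class with a non-rigid star class, and enough theta subgraphs exist inside $\thetagraph{n}$ to span every non-rigid star class appearing in $\ker(\delta)$. The main obstacle, and the place where planarity is used essentially, is the triconnected planar case. Here the strategy is to invoke Whitney's uniqueness-of-planar-embedding theorem to fix a face structure on $\graf$ and use the face boundary cycles as a systematic source of $Q$-relations (Lemma \ref{lem:Q}) and $\theta$-relations (Lemma \ref{lem:theta}) in $H_1(B(\graf_w))$. The hard combinatorial task is to show that these face-based relations suffice to rewrite every pesky generator of $C(\graf,w)$ as a class already in $\psi(M(\graf))$. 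Planarity is indispensable here: Example \ref{example:non-planar}, in the spirit of the $\completegraph{3,3}$ calculation of Example \ref{example:K33 stars}, shows that without a global face structure the available $\theta$-relations are too weak, and generators beyond atomic star, loop, and theta classes must appear.
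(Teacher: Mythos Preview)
Your high-level architecture---pesky cycles plus a nested connectivity induction---matches the paper, but two substantive gaps separate your sketch from a proof.

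\textbf{The wrong induction parameter.} You induct on $N_1(\graf)$ and $N_2(\graf)$; the paper inducts on $N_1(\graf_e)$ and $N_2(\graf_e)$, where $e$ is the edge carrying $w$. This is not cosmetic. The pesky cycles live in $\graf_w$, whose combinatorics is governed by $\graf_e$, not $\graf$. A triconnected $\graf$ gives only a biconnected $\graf_e$, so your ``triconnected base case'' still secretly contains the entire $N_2(\graf_e)$ induction. Moreover, when you perform surgery along an $\{x,y\}$-component of $\graf$, nothing guarantees that the bivalent vertex $w$ survives in the target in a way compatible with the statement; the paper avoids this by cutting $\graf_e$ and then re-inserting a new edge to play the role of $e$ (see the proof of Lemma~\ref{lem:local case}), a step your outline does not supply.

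\textbf{The triconnected base case is the real content, and your plan does not address it.} You propose to fix the face structure via Whitney and use face boundary cycles as a source of $Q$- and $\theta$-relations, then say the ``hard combinatorial task'' is to check this suffices. The paper does something quite different and concrete: given a pesky loop cycle $b$ satisfying {\bf P} and {\bf S}, it builds an embedded \emph{angel graph} from two disjoint $v$--$v'$ paths and the support of $b$, and then shows (Proposition~\ref{prop:angel}, with Lemma~\ref{lem:essential vertices}) that, under triconnectivity of $\graf_e$ and planarity, this angel is \emph{split} by an extra arc. The splitting decomposes $b$ into loop cycles to which the path argument (Lemma~\ref{lem:path argument}) applies, with a length induction on arcs of the angel. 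Planarity enters precisely here, by ruling out $\completegraph{3,3}$ minors in Figure~\ref{fig:angel bipartite}; it is not used to fix a face structure. Your face-cycle approach may or may not be workable, but as stated it is a hope, not an argument.

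\textbf{A further omission in the induction step.} In the $N_2$ induction the paper distinguishes local and global loop cycles and, for global ones, invokes Lemma~\ref{lem:not pesky}: after surgery the residual cycle $ne_0^{k-1}b$ satisfies {\bf K}, and one must show $n=0$. This uses Lemma~\ref{lem:independent stars} together with the Ko--Park fact that $H_1(B(\graf))$ is torsion-free for planar $\graf$. Your outline treats this step as routine bookkeeping with Proposition~\ref{prop:basic surgery}, but without this torsion-freeness argument the reduction does not close.
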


We now clarify the relationship between this result and the main theorem, beginning with the following simple observation.

\begin{lemma}\label{lem:connected reduction}
Let $\graf_1$ and $\graf_2$ be planar graphs. The conclusion of Theorem \ref{thm:main} holds for $\graf_1\sqcup \graf_2$ if and only if it holds for $\graf_1$ and $\graf_2$.
\end{lemma}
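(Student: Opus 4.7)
The plan is to combine the homeomorphism $B(\graf_1\sqcup\graf_2)\cong B(\graf_1)\times B(\graf_2)$ recalled in Section \ref{section:generators and relations} with the Künneth theorem. Because $H_0$ of any space is free Abelian, the relevant Tor groups vanish, and one obtains a splitting of $\mathbb{Z}[E]$-modules, compatible with the bigrading by particle count in each factor,
\[
H_2(B(\graf_1\sqcup\graf_2))\cong H_2(B(\graf_1))\oplus\bigl(H_1(B(\graf_1))\otimes H_1(B(\graf_2))\bigr)\oplus H_2(B(\graf_2)),
\]
where the outer summands sit as $H_2(B(\graf_i))\otimes H_0(B_0(\graf_{3-i}))$ and are then acted on by $\mathbb{Z}[E(\graf_{3-i})]$ through edge stabilization.

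For the \emph{if} direction I would argue summand by summand. The outer summands are images, under the inclusion $\graf_i\hookrightarrow\graf_1\sqcup\graf_2$, of classes in $M(\graf_i)=H_2(B(\graf_i))$; since inclusions carry theta, star, and loop classes to classes of the same type and $M$ is by definition closed under edge stabilization, such classes lie in $M(\graf_1\sqcup\graf_2)$. For the middle summand, Proposition \ref{prop:stars and loops generate}, applied componentwise, shows that each $H_1(B(\graf_i))$ is generated over $\mathbb{Z}[E(\graf_i)]$ by star and loop classes; external products of such generators across the two factors are by definition toric classes, so the middle summand is contained in $M(\graf_1\sqcup\graf_2)$ as well.

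For the \emph{only if} direction, let $\pi\colon H_2(B(\graf_1\sqcup\graf_2))\to H_2(B(\graf_1))$ be the projection extracted from the Künneth splitting; it is a retraction of the natural inclusion. Given $x\in H_2(B(\graf_1))$, the hypothesis lets us write $x=\sum_ip_ig_i$ with each $g_i$ a generator of $M(\graf_1\sqcup\graf_2)$. Since $\graf_1$ and $\graf_2$ are vertex-disjoint, every theta, star, or loop class in $\graf_1\sqcup\graf_2$ is supported entirely in one of the two components, and external product generators split as a product of a class from $\graf_1$ and one from $\graf_2$. A bigrading check then shows that $\pi(p_ig_i)=0$ unless $g_i\in M(\graf_1)$, in which case $\pi(p_ig_i)=p_i^{(1)}g_i$, where $p_i^{(1)}$ denotes the image of $p_i$ under the ring map $\mathbb{Z}[E]\to\mathbb{Z}[E(\graf_1)]$ that annihilates the edges of $\graf_2$. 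Summing, $x\in M(\graf_1)$, and the argument for $\graf_2$ is symmetric.

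The argument is essentially formal, so there is no genuine obstacle; the only care required is in tracking the $\mathbb{Z}[E]$-action through the Künneth decomposition and in confirming that the generating classes of $M$ interact cleanly with the product decomposition of the configuration spaces.
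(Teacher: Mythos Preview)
Your argument is correct and follows the same K\"unneth approach as the paper. The one noteworthy difference is how the Tor terms are dispatched: you observe that the relevant Tor groups involve an $H_0$, which is free abelian, so they vanish for any graphs; the paper instead invokes planarity via \cite[Cor.~3.6]{KoPark:CGBG} to conclude that $H_1(B(\graf_i))$ is torsion-free. Your route is slightly more elementary and shows that this particular lemma does not actually require the planarity hypothesis, whereas the paper's phrasing ties it to the ambient standing assumption. One small imprecision: external products in $M(\graf_1\sqcup\graf_2)$ need not split across the two components---both factors may live in the same $\graf_i$---but such a product already lies in $M(\graf_i)$, so your bigrading check goes through unchanged.
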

\begin{proof}
The claim follows from the homeomorphism $B\left(\graf_1\sqcup\graf_2\right)\cong B(\graf_1)\times B(\graf_2)$ and the K\"{u}nneth isomorphism. The latter holds integrally since $H_1(B(\graf_i))$ is torsion-free for $i\in\{1,2\}$ by planarity \cite[Cor. 3.6]{KoPark:CGBG}.
\end{proof}

\begin{proposition}\label{prop:implication}
If $\graf$ is a graph with a bivalent vertex $w$ such that the conclusion of Theorem \ref{thm:main} holds for $\graf_w$, then the conclusion of Theorem \ref{thm:main} also holds for $\graf$ provided either
\begin{enumerate}
\item $\graf_w$ is connected and the conclusion of Theorem \ref{thm:reformulation} holds for $\graf$ and $w$, or
\item $\graf_w$ is disconnected.
\end{enumerate}
\end{proposition}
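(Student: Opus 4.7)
The plan is to exploit the long exact sequence of Proposition \ref{prop:vertex explosion} associated to the bivalent vertex $w$. The key observation is that $\iota_*(H_2(B(\graf_w))) \subseteq M(\graf)$: by hypothesis $H_2(B(\graf_w)) = M(\graf_w)$, and both theta classes and external products of star and loop classes are defined via topological embeddings of standard graphs, so they compose with $\iota : \graf_w \to \graf$ (viewing $\graf_w$ inside a subdivision of $\graf$) to yield classes of the same type in $\graf$. Given this containment together with the inclusion $\psi(M(\graf)) \supseteq \ker(\delta)$, any $x \in H_2(B(\graf))$ admits some $m \in M(\graf)$ with $\psi(m) = \psi(x)$, and exactness then gives $x - m \in \iota_*(H_2(B(\graf_w))) \subseteq M(\graf)$, forcing $x \in M(\graf)$. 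The proposition therefore reduces to verifying $\psi(M(\graf)) \supseteq \ker(\delta)$ in each case.

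In case (1), this surjectivity is exactly Theorem \ref{thm:reformulation}. In case (2), I will show the stronger statement $\ker(\delta) = 0$. Write $\graf_w = \graf_w^{(1)} \sqcup \graf_w^{(2)}$ with the two edges $e, f$ of $\graf$ incident on $w$ lying in different components; then $\delta$ is multiplication by $f - e$. By planarity and the Ko--Park torsion-freeness result invoked in Lemma \ref{lem:connected reduction}, the K\"unneth formula yields the decomposition
\[H_1(B(\graf_w)) \cong \bigl(H_1(B(\graf_w^{(1)})) \otimes H_0(B(\graf_w^{(2)}))\bigr) \oplus \bigl(H_0(B(\graf_w^{(1)})) \otimes H_1(B(\graf_w^{(2)}))\bigr),\]
and $e$ and $f$ act through the first and second tensor factors respectively, preserving both summands.

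On the first summand, connectedness of $\graf_w^{(2)}$ forces $H_0(B(\graf_w^{(2)})) \cong \mathbb{Z}[t_2]$ with each edge of $\graf_w^{(2)}$ acting as the weight-$1$ variable $t_2$, so this summand becomes the polynomial module $H_1(B(\graf_w^{(1)}))[t_2]$ on which $\delta$ acts by multiplication by $t_2 - e$. A leading-coefficient argument shows this multiplication to be injective on any such polynomial module, since the top coefficient of $(t_2 - e) p(t_2)$ agrees with that of $p(t_2)$. Treating the second K\"unneth summand symmetrically yields $\ker(\delta) = 0$. The main subtlety to verify is the compatibility of the K\"unneth decomposition with the full $\mathbb{Z}[E]$-action, which reduces to the fact that $e$ and $f$ act on separate tensor factors.
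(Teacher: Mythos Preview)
Your argument for case (1) is essentially identical to the paper's: both use $\iota_*(M(\graf_w))\subseteq M(\graf)$ together with exactness to reduce to the surjectivity statement of Theorem \ref{thm:reformulation}.

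For case (2), your approach diverges from the paper's. The paper does not use the long exact sequence at all here; instead it invokes Lemma \ref{lem:connected reduction} to pass from $\graf_w$ to its components and then cites an external result (\cite[Prop.~5.22]{AnDrummondColeKnudsen:SSGBG}) to reassemble $\graf$ from the pieces across the cut vertex $w$. Your route is more self-contained: you stay inside the long exact sequence and show directly that $\ker(\delta)=0$, so that the same element-chase as in case (1) finishes the job. Your K\"unneth-plus-leading-term argument for injectivity of multiplication by $f-e$ on $H_1(B(\graf_w^{(1)}))[t_2]$ is correct and pleasantly elementary; note, incidentally, that the Tor term in K\"unneth already vanishes because $H_0$ is free, so the appeal to Ko--Park torsion-freeness is not actually needed at this step. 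One small gap: you tacitly assume that $\graf_w$ has exactly two components with the edges at $w$ in different ones, which requires $\graf$ itself to be connected. This is harmless---Lemma \ref{lem:connected reduction} reduces immediately to that case---but you should say so.
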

\begin{proof}
In the first case, we have $\iota_*(H_2(B(\graf_w)))=\iota_*(M(\graf_w))\subseteq M(\graf)$. From the long exact sequence above, the map $\psi$ induces an isomorphism \[\frac{H_2(B(\graf))}{\iota_*(H_2(B(\graf_w)))}\cong \ker(\delta)\implies \frac{H_2(B(\graf))}{M(\graf)}\cong \frac{\ker(\delta)}{\psi(M(\graf))}=0,\] whence $H_2(B(\graf))=M(\graf)$, as desired.

In the second case, Lemma \ref{lem:connected reduction} implies that the conclusion of Theorem \ref{thm:main} holds for each component of $\graf_w$, and the claim follows from \cite[Prop. 5.22]{AnDrummondColeKnudsen:SSGBG}. 
\end{proof}

\begin{corollary}\label{cor:equivalent}
Theorems \ref{thm:main} and \ref{thm:reformulation} are equivalent.
\end{corollary}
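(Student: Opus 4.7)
The two implications can be handled separately. That Theorem \ref{thm:main} implies Theorem \ref{thm:reformulation} is nearly tautological: exactness of the sequence in Proposition \ref{prop:vertex explosion} identifies $\ker(\delta)$ with the image of $\psi : H_2(B(\graf)) \to H_1(B(\graf_w))$ (there is only one summand, as $w$ is bivalent), and the hypothesis $M(\graf) = H_2(B(\graf))$ yields immediately that $\psi(M(\graf)) = \ker(\delta)$.

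The substantive direction is the reverse. The plan is to induct on the first Betti number $b_1(\graf)$, with Proposition \ref{prop:implication} supplying the inductive step. By Lemma \ref{lem:connected reduction}, it suffices to treat connected $\graf$. The base case $b_1(\graf) = 0$ is that of a tree, for which Theorem \ref{thm:main} is classically known: no theta subgraph exists, and $H_2$ is generated by external products of star classes. For the inductive step with $b_1(\graf) \geq 1$, pick an edge $e$ lying on a cycle of $\graf$ and subdivide $e$ to introduce a bivalent vertex $w$, producing a graph $\graf'$ homeomorphic to $\graf$. Since configuration spaces, the $\mathbb{Z}[E]$-action, and the submodule $M$ are all compatible with subdivision, establishing Theorem \ref{thm:main} for $\graf'$ is equivalent to establishing it for $\graf$. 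The explosion $\graf'_w$ is connected, because $e$ lay on a cycle, and satisfies $b_1(\graf'_w) = b_1(\graf) - 1$, so the inductive hypothesis provides Theorem \ref{thm:main} for $\graf'_w$. Invoking Theorem \ref{thm:reformulation} for the pair $(\graf', w)$, case (1) of Proposition \ref{prop:implication} then delivers Theorem \ref{thm:main} for $\graf'$, completing the induction.

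There is no real obstacle beyond assembling the machinery already in place. The only point meriting attention is the benignity of subdivision for the statement of Theorem \ref{thm:main}: one must note that loop, star, and theta classes in $\graf$ correspond to classes of the same type in $\graf'$ under the canonical identification of their configuration space homologies, and that the action of $\mathbb{Z}[E(\graf)]$ factors through the action of $\mathbb{Z}[E(\graf')]$ in the obvious way. With this observation in hand, the induction runs cleanly and the equivalence follows.
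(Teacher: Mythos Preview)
Your proof is correct and follows essentially the same route as the paper's: the forward implication is immediate from exactness, and the reverse is an induction on the first Betti number, with trees as base case and Proposition~\ref{prop:implication} furnishing the inductive step. You are slightly more explicit than the paper in two respects---you invoke Lemma~\ref{lem:connected reduction} to reduce to the connected case, and you choose the subdivided edge to lie on a cycle so that $\graf'_w$ is connected and case~(1) of Proposition~\ref{prop:implication} applies---but these are exactly the details the paper elides with the phrase ``subdividing if necessary.''
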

\begin{proof}
The forward implication is immediate from exactness. For the reverse implication, we proceed by induction on the first Betti number $\beta_1$ of $\graf$. The base case of $\beta_1=0$ is that of a tree, which is well known (see also Example \ref{example:tree}). For the induction step, choose a bivalent vertex $w$ such that $\graf$ is connected, subdividing if necessary. Since the conclusion of Theorem \ref{thm:main} holds for $\graf_w$ by induction, Proposition \ref{prop:implication} yields the conclusion.
\end{proof}

In pursuing Theorem \ref{thm:reformulation}, we will use the following criterion repeatedly. Here we abusively write $w$ and $w'$ for the vertices of $\graf_w$ corresponding to the two half-edges of $\graf$ incident on $w$.

\begin{lemma}[Path argument]\label{lem:path argument}
Let $\gamma$ be a path from $w$ to $w'$ in $\graf_w$. A loop or star class lies in $\psi(M(\graf))$ if it admits a representing cycle with support disjoint from $\gamma$.
\end{lemma}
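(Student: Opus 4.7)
The plan is to construct, for any loop or star class in $H_1(B(\graf_w))$ with a representative supported off $\gamma$, an explicit preimage under $\psi$ which already lies in $M(\graf)$.

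First I would observe that since $\gamma$ is a path from $w$ to $w'$ in $\graf_w$ and the inclusion $\graf_w \subseteq \graf$ re-identifies these two vertices, closing $\gamma$ up produces a simple oriented cycle $\bar\gamma$ in $\graf$ passing through $w$ exactly once. Let $\beta_\gamma \in H_1(B(\graf))$ be the associated loop class and $b_\gamma$ its standard chain representative. If $[c] \in H_1(B(\graf_w))$ is a loop or star class with representative $c$ whose support avoids $\gamma$, then in particular that support avoids $\{w,w'\} \subseteq \gamma$, so $c$ may be naturally interpreted as a cycle in $\fullyreduced{\graf}$ with support disjoint from $\bar\gamma$ (tacitly using a subdivision in the sense of Section \ref{section:vertex explosion}). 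The external product $\beta_\gamma \times [c] \in H_2(B(\graf))$ is therefore defined, and it lies in $M(\graf)$ by construction, being the external product of a loop class with a loop or star class.

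The remaining step is to verify that $\psi(\beta_\gamma \times [c]) = \pm [c]$. Writing $H(w) = \{h_0, h'\}$ and privileging $h_0$ in the decomposition of Proposition \ref{prop:vertex explosion}, I would note that the loop cycle $b_\gamma$, which sums differences of half-edges along $\bar\gamma$, contributes at the bivalent vertex $w$ precisely the term $\epsilon(h' - h_0)$ for some sign $\epsilon = \pm 1$, since the loop uses both of the two half-edges available there. Splitting $b_\gamma = \epsilon(h' - h_0) + b'_\gamma$ with $b'_\gamma$ involving no half-edge generator at $w$, and recalling that $c$ itself involves no half-edges at $w$ (its support avoiding $\gamma$), the external product chain factors as
\[
b_\gamma \otimes c = \epsilon(h'-h_0)\cdot c + b'_\gamma \cdot c,
\]
in which the second summand has no $(h'-h_0)$ factor at $w$. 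Reading off the coefficient of $(h'-h_0)$ per the explicit formula for $\psi$ yields $\psi(\beta_\gamma \times [c]) = \epsilon[c]$, as desired.

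I do not anticipate a substantial obstacle here; once the right preimage candidate is identified, the argument is a matter of unpacking the \'{S}wi\k{a}tkowski-level description of $\psi$. The only mild subtlety is the coherent interpretation of $c$ as a chain in both $\fullyreduced{\graf_w}$ and $\fullyreduced{\graf}$, which is immediate from disjointness of its support from $\{w,w'\}$ together with the convention of regarding $\graf_w$ as a subgraph of a subdivision of $\graf$.
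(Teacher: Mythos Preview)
Your proposal is correct and follows essentially the same approach as the paper: close up $\gamma$ to a loop in $\graf$, form the external product with the given cycle, and note that this lies in $M(\graf)$ and maps to the original class under $\psi$. The paper dispatches the verification that $\psi$ sends the external product to $[c]$ with the phrase ``by inspection of the definition of $\psi$,'' whereas you helpfully spell out the chain-level computation.
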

\begin{proof}
Let $a$ denote the representing cycle. From the path $\gamma$, we obtain a loop in $\graf$ and thereby a loop cycle. By the assumption on the support of $a$, the external product of $a$ with this loop cycle is defined, and the homology class of the external product lies in $M(\graf)\cap \psi^{-1}([a])$ by inspection of the definition of $\psi$.
\end{proof}

As an immediate application, we obtain the following special case of Theorem \ref{thm:reformulation}.

\begin{corollary}\label{cor:vertices coincide}
Let $\graf$ be a connected graph with a bivalent vertex $w$. If the edges incident on $w$ form a loop, then the conclusion of Theorem \ref{thm:reformulation} holds for $\graf$ and $w$.
\end{corollary}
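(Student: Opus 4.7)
The plan is to combine the $1$-cut torsion principle of Lemma \ref{lem:1-cut torsion} with the path argument of Lemma \ref{lem:path argument}. Let $e_1$ and $e_2$ denote the edges incident on $w$, and assume $e_1\neq e_2$, since the self-loop case forces $\graf\cong S^1$ and the conclusion is immediate. These edges share a second endpoint $v\neq w$, so in $\graf_w$ they become two tails meeting at $v$, and the path $\gamma$ from $w$ to $w'$ traverses $e_1$, $v$, $e_2$. Because $w$ is bivalent, the connecting map $\delta$ of Proposition \ref{prop:vertex explosion} is multiplication by $e_2-e_1$, so $\ker(\delta)$ is the $(e_2-e_1)$-torsion submodule of $H_1(B(\graf_w))$.

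The crucial observation is that $v$ is a $1$-cut of $\graf_w$, and each of $e_1$, $e_2$ constitutes its own interval component of $(\graf_w)_v$. Lemma \ref{lem:1-cut torsion} therefore shows that $\ker(\delta)$ is contained in $\iota_*(H_1(B((\graf_w)_v)))$. Writing $\graf''=\graf\setminus(\{w\}\cup e_1\cup e_2)$, the decomposition $(\graf_w)_v=\graf''_v\sqcup e_1\sqcup e_2$, combined with the vanishing of $H_1$ of the configuration space of an interval and the K\"unneth theorem, reduces the problem to controlling the image of $H_1(B(\graf''_v))$ under the $\mathbb{Z}[E]$-linear inclusion into $H_1(B(\graf_w))$.

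By Proposition \ref{prop:stars and loops generate}, applied componentwise, $H_1(B(\graf''_v))$ is generated over $\mathbb{Z}[E]$ by star and loop classes. The structural point driving the argument is that, because $v$ has been exploded, its copies in $\graf''_v$ are all valence-one; hence they support no star class and lie on no cycle. Every such generator therefore admits a representative with support lying entirely in $\graf''\setminus\{v\}$, which is disjoint from $\gamma$. Lemma \ref{lem:path argument} then places each such class in $\psi(M(\graf))$, and $\mathbb{Z}[E]$-linearity of $\psi$ together with the $\mathbb{Z}[E]$-stability of $M(\graf)$ transport this conclusion to arbitrary stabilizations and linear combinations. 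Hence $\ker(\delta)\subseteq\psi(M(\graf))$, as desired.

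I do not anticipate a serious obstacle here. The path argument would fail for a star class supported at $v$ itself, so the critical feature of the argument is that the explosion of $v$ in the $1$-cut reduction removes such classes from the picture entirely, letting the rest of the reasoning proceed without having to treat them directly.
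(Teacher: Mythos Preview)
Your proof is correct and follows essentially the same approach as the paper's. The paper's version is more terse: it notes that $e$ and $e'$ lie in distinct components of $(\graf_w)_u$ (your $v$), invokes Lemma \ref{lem:1-cut torsion} to place $\ker(\delta)$ in the image of $H_1(B((\graf_w)_u))$, and then simply says ``the path argument applies.'' You have unpacked this last step by explicitly producing star and loop generators with support disjoint from $\gamma$, which is the right justification.
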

\begin{proof}
Let $e$ and $e'$ denote the edges incident on $w$. By assumption, there is a vertex $u\neq w$ such that $e$ and $e'$ are also incident on $u$. Then $e$ and $e'$ lie in distinct components of $(\graf_w)_u$, so Lemma \ref{lem:1-cut torsion} implies that $\ker(\delta)$ lies in the image of $H_1(B((\graf_w)_u))$, and the path argument applies.
\end{proof}

We close with an analogue of the path argument for star classes.

\begin{lemma}\label{lem:non-rigid in image}
If $a$ is a non-rigid star cycle, then $[a]\in \psi(M(\graf))$.
\end{lemma}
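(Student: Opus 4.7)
The plan is to use the $Q$-relation (Lemma \ref{lem:Q}) to convert the non-rigid star class into a multiple of a loop class supported in $\graf_e$, and then exhibit this loop class as $\psi$ of an external product of loop classes in $M(\graf)$, relying on the chain-level formula for $\psi$.

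Let $a$ be a non-rigid star cycle supported at a vertex $u$ with half-edges $h_1, h_2, h_3$. Without loss of generality, suppose $h_1$ and $h_2$ lie in a common component of $(\graf_w)_u$, so that there is a path $\gamma$ in $\graf_w \setminus \{u\}$ from $v(\bar h_1)$ to $v(\bar h_2)$, where $\bar h_i$ denotes the half-edge of $e_i = e(h_i)$ opposite to $h_i$. Because $w$ and $w'$ are univalent in $\graf_w$, the path $\gamma$ avoids both, and therefore $L \coloneqq e_1 \cup \gamma \cup e_2$ is a loop through $u$ contained in $\graf_e$. The lollipop subgraph $L \cup e_3 \subseteq \graf_w$ together with the $Q$-relation then yields
\[
[a] = \pm (e_1 - e_2)[\beta_L] \in H_1(B(\graf_w)),
\]
where $\beta_L$ denotes the loop class of $L$.

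Since $\psi$ is $\mathbb{Z}[E]$-linear and $M(\graf)$ is a $\mathbb{Z}[E]$-submodule, it suffices to produce $\eta \in M(\graf)$ with $\psi(\eta) = [\beta_L]$. Choose a path $\lambda$ from $v$ to $v'$ in $\graf_e$ disjoint from $L$, and set $L^* \coloneqq \lambda \cup e \cup e'$: a loop in $\graf$ passing through $w$ that is disjoint from $L$. Then $\eta \coloneqq \beta_{L^*} \times \beta_L \in M(\graf)$, and a chain-level calculation using the tensor representative $b_{L^*} \cdot b_L$ shows that $\psi(\eta)$ is obtained by extracting the coefficient of $(h - h')_w$; since $b_{L^*}$ contains the summand $\pm(h-h')_w$ at $w$ (with the other summands vanishing under $\psi$), this coefficient equals $\pm b_L$, whence $\psi(\eta) = \pm [\beta_L]$.

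The principal obstacle is producing the disjoint auxiliary path $\lambda$. If $L$ passes through $v$ or $v'$---in particular when $u \in \{v, v'\}$---or more generally obstructs every $v$-to-$v'$ path in $\graf_e$, the representative $L$ must first be modified within its homology class. I would attempt to reroute $\gamma$ via Proposition \ref{prop:basic surgery} or the $\theta$-relation (Lemma \ref{lem:theta}) to avoid the obstruction, and when this fails, apply Lemma \ref{lem:cuts} to replace $[\beta_L]$ by a homologous cycle supported away from any $1$-cut separating $v$ from $v'$. Verifying that these modifications preserve non-rigidity---so that the $Q$-relation remains applicable---is where I expect the argument to require the most care.
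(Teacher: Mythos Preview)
Your strategy has a genuine gap, and the remedies you sketch at the end cannot close it. Consider $\graf=\thetagraph{4}$ with $w$ placed on one of the four edges. Then $\graf_w$ is a copy of $\thetagraph{3}$ (on the remaining three edges) with a pendant tail at each essential vertex. The non-rigid star class $[a]$ at one essential vertex $u$ uses the three $\thetagraph{3}$-half-edges. Any loop $L$ produced by your $Q$-relation step lies in this $\thetagraph{3}$ and therefore passes through both essential vertices, each of which is a $1$-cut of $\graf_w$ separating $w$ from $w'$. No path $\lambda$ disjoint from $L$ exists. None of your proposed fixes helps: rerouting $\gamma$ still yields a loop through both essential vertices; the $\theta$-relation only moves the star class between these two vertices; and Lemma \ref{lem:cuts} asks for a representative supported away from both, which no nonzero loop or star cycle in $\graf_w$ has. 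More fundamentally, your method only ever produces classes in $M(\graf)$ that are external products, but $H_2(B_3(\thetagraph{4}))$ contains no nonzero external product (any two loops share a vertex, as do any loop and any star support), while by Lemma \ref{lem:theta les} the unique preimage of $[a]$ under $\psi$ is the theta class. So theta classes are not optional here. (A minor point: your $Q$-relation formula should read $[a]=\pm(e_3-e_i)[\beta_L]$ for $i\in\{1,2\}$, since the tail edge $e_3$ is what appears in Lemma \ref{lem:Q}; this does not affect the strategy.)

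The paper's argument is organized precisely around this obstruction. Non-rigidity gives an embedded $\thetagraph{3}$ with essential vertices $u$ and $u'$, and the $\theta$-relation transports $[a]$ to a star class supported at the single point $u'$. One then chooses a path from $w$ to $w'$: if it misses $u'$, the path argument applies directly to the star class at $u'$; if it contains $u'$, the path together with the $\thetagraph{3}$ assembles into an embedded $\thetagraph{4}$ in $\graf$ with $w$ on one of its arcs, and Lemma \ref{lem:theta les} identifies $[a]$ as $\psi$ of the resulting theta class. The dichotomy is on a single vertex rather than an entire loop, which is what makes the case split exhaustive, and the second branch is exactly where theta classes enter.
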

\begin{proof}
Let $u$ denote the support of $a$. By non-rigidity, there is an embedding of a subdivision of $\thetagraph{3}$ into $\graf_w$ under which $u$ is the image of an essential vertex. Denote the image of the other essential vertex by $u'$, and choose a path in $\graf_w$ from $w$ to $w'$. If the path avoids $u'$, then the $\theta$-relation and the path argument yield the conclusion. If the path contains $u'$, then our embedding extends to an embedding of a subdivision of $\thetagraph{4}$ into $\graf$, and Lemma \ref{lem:theta les} implies the claim.
\end{proof}

\subsection{Standard and pesky cycles}\label{section:cycles}

The goal of this section is to describe generators for the cokernel of the map of Theorem \ref{thm:reformulation}. Before giving a precise formulation in Proposition \ref{prop:cokernel generators} below, we require a few preliminary definitions and results. We begin by establishing notation that we maintain for the remainder of the paper.

\begin{notation}[see Figure \ref{fig:setup}]\label{notation:setup} Let $\graf$ be a connected graph. Fix an edge $e$ with vertices $v$ and $v'$, and subdivide $e$ by adding a bivalent vertex $w$. Abusively, we write $w$ and $w'$ for the resulting vertices of $\graf_w$ and $e$ and $e'$ for the resulting edges, so that $e$ has vertices $v$ and $w$ (resp. $e'$, $v'$ and $w'$).
We write $\graf_e\coloneqq\graf\setminus e$ for the graph given by the complement of the (open) edge $e$.
\end{notation}

As long as $\graf$ has an essential vertex, we may assume after smoothing that $v$ and $v'$ are essential in $\graf$ and in $\graf_w$; however, either vertex may be bivalent in $\graf_e$. In light of Corollary \ref{cor:vertices coincide}, we further assume that $v\neq v'$, i.e., that $e$ is not a self-loop in $\graf$.

\begin{figure}
\begin{align*}
\graf&=
\begin{tikzpicture}[baseline=-.5ex]
\draw(-0.5,-0.5) rectangle (0.5,0.5);
\fill (-0.5,0) circle (2pt) node[above left] {$v$} (0.5,0) circle (2pt) node[above right] {$v'$};
\draw[rounded corners] (-0.5,0) -- (-1,0) -- (-1,-1) -- node[midway, below] {$e$} (1,-1) -- (1,0) -- (0.5,0);
\end{tikzpicture}&
\graf_w&=
\begin{tikzpicture}[baseline=-.5ex]
\draw(-0.5,-0.5) rectangle (0.5,0.5);
\fill (-0.5,0) circle (2pt) node[above left] {$v$} (0.5,0) circle (2pt) node[above right] {$v'$};
\draw[rounded corners] (-0.5,0) -- (-1,0) -- node[midway, left] {$e$} (-1,-1) (0.5,0) -- (1,0) -- node[midway, right] {$e'$} (1,-1);
\fill (-1,-1) circle (2pt) node[left] {$w$} (1,-1) circle (2pt) node[right] {$w'$};
\end{tikzpicture}&
\graf_e&=
\begin{tikzpicture}[baseline=-.5ex]
\draw(-0.5,-0.5) rectangle (0.5,0.5);
\fill (-0.5,0) circle (2pt) node[above left] {$v$} (0.5,0) circle (2pt) node[above right] {$v'$};
\end{tikzpicture}
\end{align*}
\caption{Schematic depiction of Notation \ref{notation:setup}}\label{fig:setup}
\end{figure}
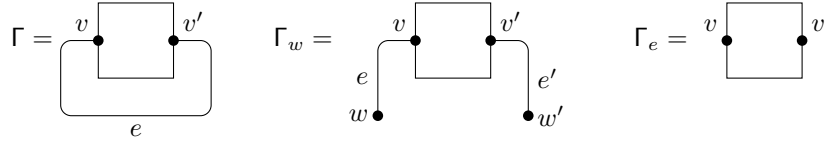

\begin{definition} Let $\graf_w$ be as in Notation \ref{notation:setup}.
\begin{enumerate}
\item A \emph{standard cycle} is a weight-homogeneous $1$-cycle $c\in S_1(\graf_w)$ of the form \[c=\sum_{i=1}^rp_ia_i+\sum_{j=1}^sq_jb_j,\] where the $a_i$ are star cycles, the $b_j$ loop cycles, and the $p_i$ and $q_j$ polynomials in the edges of $\graf_w$. The \emph{support} of $c$ is the union of the supports of the $a_i$ and $b_j$.
\item We define the following potential properties of a standard cycle $c$.
\begin{itemize}
\item[{\bf(P)}] Every {\bf path} from $w$ to $w'$ in $\graf_w$ intersects the support of every $a_i$ and every $b_j$.
\item[{\bf(E)}] If an {\bf edge} $e$ is involved in $p_i$ (resp. $q_j$), then $e$ is adjacent to (resp. contained in) the support of $a_i$ (resp. $b_j$).
\item[{\bf(S)}] No $1$-cut of $\graf_w$ separating $e$ and $e'$ is contained in the {\bf support} of $c$.
\item[{\bf(K)}] The class $[c]$ lies in the {\bf kernel} of $\delta$.
\end{itemize}
\item If $c$ has all four of these properties, then we say that $c$ is \emph{pesky}.
\end{enumerate}
\end{definition}

The interest of pesky cycles lies in the following result.

\begin{proposition}\label{prop:cokernel generators}
The quotient $\ker(\delta)/\psi(M(\graf))$ is generated by the images of pesky cycles with no star summands.
\end{proposition}

The proof requires a few simple lemmas.

\begin{lemma}\label{lem:cuts}
If $c$ is a standard cycle satisfying {\bf K}, then $c$ is homologous to a standard cycle satisfying {\bf S} and {\bf E}.
\end{lemma}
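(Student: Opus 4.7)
The plan is to treat properties \textbf{E} and \textbf{S} separately, using chain-level boundary moves for \textbf{E} and Lemma \ref{lem:1-cut torsion} for \textbf{S}, and then combine them iteratively to obtain both.

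For \textbf{E}, the key identity is that if edges $f, f'$ share a vertex $u$ with corresponding half-edges $h, h' \in H(u)$, then $f - f' = \partial(h - h')$ in $\fullyreduced{\graf_w}$. Tensoring against any cycle $x$ supported at vertices distinct from $u$ yields $(f - f') \cdot x = \partial((h - h') \cdot x)$, which is well-defined because the two factors live in disjoint vertex tensorands. Iterated along a path in $\graf_w$, this lets us replace any edge appearing in $p_i$ (resp. $q_j$) by an edge closer to the support, provided the path avoids the support of $a_i$ (resp. $b_j$); such a path exists because we may truncate at the first hit. For a star summand $p_i a_i$, we walk a violating edge to one adjacent to the single support vertex of $a_i$. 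For a loop summand $q_j b_j$, we first walk to an edge adjacent to the loop, then exchange it with a loop-edge via the chain-level Q-relation (Lemma \ref{lem:Q}), at the cost of an additional star cycle at a vertex of the loop---already in the support. Hence \textbf{E} is achievable without enlarging the support of $c$.

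For \textbf{S}, suppose $\{x\}$ is a $1$-cut of $\graf_w$ separating $e$ and $e'$ with $x$ lying in the support of $c$. Since $w$ is bivalent, Proposition \ref{prop:vertex explosion} gives that $\delta$ is multiplication by $e - e'$, so \textbf{K} says $(e - e')[c] = 0$. Lemma \ref{lem:1-cut torsion} then places $[c]$ in the image of $\iota_* : H_1(B((\graf_w)_x)) \to H_1(B(\graf_w))$, and any preimage is, by Proposition \ref{prop:stars and loops generate}, representable by a standard cycle in $(\graf_w)_x$. Its image in $\graf_w$ is a standard cycle homologous to $c$ whose support avoids $x$.

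Combining the two steps requires care, since an \textbf{S} move at $x$ may in principle introduce other $1$-cuts into the support of the new representative. To guarantee termination, I would either explode the entire finite set $W$ of $1$-cuts of $\graf_w$ separating $e$ and $e'$ simultaneously---via an inductive multi-cut strengthening of Lemma \ref{lem:1-cut torsion}---or perform alternating \textbf{S}/\textbf{E} steps governed by a lexicographic complexity measure on (number of $1$-cuts in the support, number of \textbf{E}-violations). The main obstacle is establishing such a strengthening, or equivalently, arranging the preimage chosen in the \textbf{S} step so that its support does not reintroduce any element of $W$; once this is controlled, a concluding pass of \textbf{E} yields a standard cycle, homologous to the original $c$, that satisfies both \textbf{S} and \textbf{E}.
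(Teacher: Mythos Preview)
Your approach is essentially the paper's: first secure \textbf{S} by passing to the explosion at the full set $W$ of $1$-cuts separating $e$ and $e'$, then secure \textbf{E} by walking edges using connectedness and the $Q$-relation. Your treatment of \textbf{E} is more explicit than the paper's one-line ``the $Q$-relation and connectedness of $\graf_w$'' and is correct; in particular you correctly note that the extra star cycles produced by the $Q$-relation are supported on vertices already in the support, so \textbf{S} is not disturbed by the \textbf{E} pass.

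The gap is exactly the one you name: you stop at ``the main obstacle is establishing such a strengthening'' rather than carrying it out. The paper dispatches this in one phrase---``repeated use of Lemma~\ref{lem:1-cut torsion} shows that $[c]$ lies in the image of $H_1(B((\graf_w)_W))$''---and then invokes Proposition~\ref{prop:stars and loops generate} to get a standard representative whose support avoids $W$ all at once. So your worry about a single \textbf{S}-move at $x$ reintroducing other elements of $W$ is bypassed by never working one cut at a time. What remains is to justify the iteration: after exploding $x_1$ one needs a preimage in $H_1(B((\graf_w)_{x_1}))$ that is again $(e-e')$-torsion, or equivalently one checks that multiplication by $e-e'$ is injective on the relevant cokernel at each stage (the same $H_0$ injectivity used in the proof of Lemma~\ref{lem:1-cut torsion}, now for $(\graf_w)_{\{x_1,\dots,x_j\}}$, where $e$ and $e'$ remain in distinct components). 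This is routine once stated; the paper treats it as such. Your proposal would be complete if you replaced the hedged final paragraph with this argument.
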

\begin{proof}
Let $W$ be the set of $1$-cuts of $\graf_w$ separating $e$ and $e'$. Repeated use of Lemma \ref{lem:1-cut torsion} shows that $[c]$ lies in the image of $H_1(B((\graf_w)_W))$. By Proposition \ref{prop:stars and loops generate}, we may represent $[c]$ by a standard cycle with support avoiding $W$, and the $Q$-relation and connectedness of $\graf_w$ imply that we may take this standard cycle to satisfy {\bf E}.
\end{proof}

\begin{lemma}\label{lem:support}
Let $c$ be a standard cycle. Then $c=c_1+c_2$, where $[c_1]\in \psi(M(\graf))$ and $c_2$ is a standard cycle satisfying {\bf P}. Moreover, if $c$ satisfies {\bf E}, {\bf S}, or {\bf K}, then so does $c_2$.
\end{lemma}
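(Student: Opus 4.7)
The plan is to partition the summands of $c$ according to whether their supports meet every $w$-to-$w'$ path in $\graf_w$, then dispose of the rest via an extension of Lemma~\ref{lem:path argument}. Writing $c = \sum_i p_i a_i + \sum_j q_j b_j$, I would call a summand \emph{missable} if some path from $w$ to $w'$ in $\graf_w$ is disjoint from its support, and let $c_1$ and $c_2$ denote the sums of the missable and non-missable summands, respectively. Since each $p_i a_i$ and $q_j b_j$ is already a cycle, and weight-homogeneity is inherited from $c$, both $c_1$ and $c_2$ are again standard cycles; by construction, $c_2$ satisfies $\mathbf{P}$.

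For each missable summand, say $p_i a_i$ with avoiding path $\gamma$, I would imitate the proof of Lemma~\ref{lem:path argument}. Any $w$-to-$w'$ path in $\graf_w$ necessarily begins with $e$, traverses a path in $\graf_e$ from $v$ to $v'$, and ends with $e'$, so $\gamma$ closes up to an honest loop in $\graf$ whose underlying subgraph is disjoint from the support of $a_i$. The resulting loop cycle $\beta_\gamma \in \fullyreduced{\graf}$ admits a well-defined external product $a_i \otimes \beta_\gamma$, a $2$-cycle whose homology class lies in $M(\graf)$ and satisfies $\psi([a_i \otimes \beta_\gamma]) = [a_i]$ by direct inspection of the formula for $\psi$ in Proposition~\ref{prop:vertex explosion}. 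Scaling by $p_i$ and invoking the $\mathbb{Z}[E]$-linearity of $\psi$ (its stated compatibility with edge stabilization) yields $[p_i a_i] \in \psi(M(\graf))$; the analogous construction with a loop class in place of a star class handles the $q_j b_j$ summands, and summing gives $[c_1] \in \psi(M(\graf))$.

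It remains to transfer the properties $\mathbf{E}$, $\mathbf{S}$, $\mathbf{K}$ from $c$ to $c_2$. The first two are formal: $\mathbf{E}$ is a summand-by-summand condition, and $\mathbf{S}$ is a non-containment condition on the support of $c$, which only shrinks on passing to $c_2$. For $\mathbf{K}$, exactness of the sequence in Proposition~\ref{prop:vertex explosion} gives $\psi(M(\graf)) \subseteq \ker(\delta)$, so $[c_1] \in \ker(\delta)$ and therefore $[c_2] = [c] - [c_1] \in \ker(\delta)$ whenever $[c]$ is. The one step requiring real care---and the main (modest) obstacle---is the extension of Lemma~\ref{lem:path argument} from unstabilized to stabilized classes, which rests on two small checks: missability really forces disjointness from $v$ and $v'$ (so the external product is legitimately defined as cycles on disjoint subgraphs of $\graf$), and $\psi$ respects the $\mathbb{Z}[E]$-action.
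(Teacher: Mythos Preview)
Your argument is correct and follows essentially the same route as the paper: partition the summands into those admitting an avoiding path and those not, apply the path argument (Lemma~\ref{lem:path argument}) to each summand in $c_1$, and use $\delta\circ\psi=0$ to transfer~\textbf{K}. Your extra care about extending the path argument to stabilized classes is harmless but unnecessary: once $[a_i]\in\psi(M(\graf))$, the containment $[p_ia_i]\in\psi(M(\graf))$ follows immediately from the $\mathbb{Z}[E]$-linearity of $\psi$ and the fact that $M(\graf)$ is a $\mathbb{Z}[E]$-submodule, with no need to revisit the external product construction.
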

\begin{proof}
Let $I$ denote the set of indices $i$ for which there is a path from $w$ to $w'$ in $\graf_w$ avoiding the support of $a_i$ (resp. $J$, $j$, $b_j$), and define \begin{align*}c_1&=\sum_{i\in I} p_ia_i+\sum_{j\in J} q_jb_j\\
c_2&=c-c_1.
\end{align*} The path argument implies that $[c_1]\in \psi(M(\graf))$, and, by construction, $c_2$ is a standard cycle satisfying {\bf P} and further satisfying {\bf E} or {\bf S} if $c$ did. For the last claim, we note that $\delta\circ \psi=0$, so $\delta([c])=\delta([c_2])$.
\end{proof}

\begin{lemma}\label{lem:pesky star}
A pesky cycle contains no star summands.
\end{lemma}
\begin{proof}
Supposing otherwise, it follows from {\bf P} that the support of the star cycle in question is a $1$-cut separating $e$ and $e'$, violating {\bf S}.
\end{proof}

\begin{proof}[Proof of Proposition \ref{prop:cokernel generators}]
The claim is trivial unless $\graf$ has an essential vertex, in which case Lemmas \ref{lem:cuts}, \ref{lem:support}, and \ref{lem:pesky star} imply the claim. 
\end{proof}

\section{Examples and a first reduction}\label{section:examples and a first reduction}

The proof of Theorem \ref{thm:reformulation} proceeds through two inductions. In this section, we carry out the first (and simplest) of these inductions after considering various examples related to base cases.

\subsection{Rogues' gallery}\label{section:rogues gallery} We pause to consider a few (partially redundant) examples that will be of use in what follows.


\begin{example}\label{example:tree}
If $\graf$ is a tree, then the conclusion of Theorem \ref{thm:main} holds for $\graf$. This well known claim follows (for example) by repeated application of \cite[Prop. 5.22]{AnDrummondColeKnudsen:SSGBG}.
\end{example}

\begin{example}\label{example:tree with loops}
If $\graf$ is obtained from a tree by attaching self-loops, then the conclusion of Theorem \ref{thm:main} holds for $\graf$. This claim follows from Proposition \ref{prop:implication} by induction on the number of self-loops using Example \ref{example:tree} as the base case and Corollary \ref{cor:vertices coincide} for the induction step.
\end{example}

\begin{example}\label{example:one loop}
If the first Betti number $b_1$ of $\graf$ is equal to $1$, then the conclusion of Theorem \ref{thm:main} holds for $\graf$. This claim follows from Example \ref{example:tree} and Proposition \ref{prop:cokernel generators} after exploding a bivalent vertex $w$ such that $\graf_w$ is a tree. Indeed, a tree admits no loop cycles and no star cycles satisfying {\bf S}, hence no nonzero pesky cycles.
\end{example}

\begin{example}\label{example:unitrivalent}
If $\graf$ has exactly two essential vertices, each trivalent, then $H_2(B(\graf))$ is generated by external products of star classes and loop classes by \cite[Prop. 5.25]{AnDrummondColeKnudsen:SSGBG}, so the conclusion of Theorem \ref{thm:main} holds in this case.
\end{example}

\begin{example}\label{example:theta}
More generally, the conclusion of Theorem \ref{thm:main} holds if $\graf$ has exactly two essential vertices. Assume first that $\graf$ has no self-loops (see Figure \ref{fig:theta with markings}). Adding a bivalent vertex $w$ (shown in green) to one of the non-tail edges, we observe that the support of every loop and star cycle in $\graf_w$ contains one of the essential vertices of $\graf_w$, thereby violating {\bf S}. Thus, $\graf_w$ has no nonzero pesky cycles, and it follows from Proposition \ref{prop:cokernel generators} that the conclusion of Theorem \ref{thm:reformulation} holds. The claim in the case without self-loops now follows from Proposition \ref{prop:implication} by induction on the first Betti number using Example \ref{example:tree} as a base case. The claim in general follows by repeated application of Corollary \ref{cor:vertices coincide} as in Example \ref{example:tree with loops}.
\end{example}

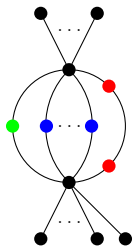
\begin{figure}[ht]
\begin{tikzpicture}[scale=1.5]
\fill[black] (-.25,1) circle (5/3 pt);
\fill[black] (.25,1) circle (5/3 pt);
\fill[black] (-.25,-1) circle (5/3 pt);
\fill[black] (.25,-1) circle (5/3 pt);
\fill[black] (.5,-1) circle (5/3 pt);
\fill[black] (0,-.5) circle (5/3pt);
\fill[black] (0,.5) circle (5/3pt);
\draw(0,0) circle (.5cm);
\draw(0,.5) to [bend right=45] (0,-.5);
\node at (0,0) {\,{\tiny$\cdots$}};
\node at (0,.85) {\,{\tiny$\cdots$}};
\node at (0,-.85) {\,{\tiny$\cdots$}};
\draw(0,.5) to [bend left=45] (0,-.5);
\draw(0,.5) to (-.25,1);
\draw(0,.5) to (.25,1);
\draw(0,-.5) to (-.25,-1);
\draw(0,-.5) to (.25,-1);
\draw(0,-.5) to (.5,-1);
\fill[green] (-.5,.0) circle (5/3pt);
\fill[blue] (-.2,.0) circle (5/3pt);
\fill[red] (.354,.354) circle (5/3pt);
\fill[red] (.354,-.354) circle (5/3pt);
\fill[blue] (.2,.0) circle (5/3pt);
\end{tikzpicture}
\caption{The graph of Example \ref{example:theta} with auxiliary markings}
\label{fig:theta with markings}
\end{figure}

\begin{example}\label{example:weird theta 1}
The conclusion of Theorem \ref{thm:main} holds for the graph obtained by adding an edge $e_0$ joining the blue vertices in the graph of Figure \ref{fig:theta with markings}. The argument here is essentially the same; we induct on the first Betti number, reducing to the case $b_1=2$ by observing that any loop or star cycle in $\graf_w$ violates either {\bf P} or {\bf S}. In the case $b_1=2$, adding the vertex $w$ to $e_0$ again yields an explosion $\graf_w$ with no nonzero pesky cycles, and, since the conclusion holds for $\graf_w$ by Example \ref{example:one loop}, the claim follows.
\end{example}

\begin{example}\label{example:weird theta 2}
The conclusion of Theorem \ref{thm:main} holds for the graph obtained by adding an edge $e_0$ joining a red vertex in the graph of Figure \ref{fig:theta with markings} either to an essential vertex or to the other red vertex. As before, the claim follows by induction on $b_1$ via a sequence of explosions such that $\graf_w$ has no nonzero pesky cycles. 
\end{example}

We close with an example showing that the assumption of planarity in Theorem \ref{thm:main} is necessary.

\begin{example}\label{example:non-planar}
Let $\graphfont{\Delta}$ be the union of a complete bipartite graph $\completegraph{3,3}$ and a star graph $\stargraph{3}$ along a set of three pairwise non-adjacent vertices---see Figure~\ref{figure:doubled k33}. Subdividing and exploding one of the edges $\stargraph{3}$, we make two claims: first, the loop cycle $b$ shown in red is pesky; second, $M(\graphfont{\Delta})$ vanishes in weight $2$. In light of Proposition \ref{prop:cokernel generators}, these claims together imply that the conclusion of Theorem \ref{thm:reformulation}, hence that of Theorem \ref{thm:main}, fails for $\graphfont{\Delta}$.

\begin{figure}[ht]
\centering
\[
\graphfont{\Delta}=\begin{tikzpicture}[baseline=-0.5ex,scale=1]
\fill(0,0) circle (2pt);
\draw(0,0) -- node[midway,above left] {$e$} (1,1) (0,0) -- (1,0) (0,0) -- (1,-1);
\foreach \i in {-1,...,1} {
	\fill (1,\i) circle (2pt);
	\fill (2,\i) circle (2pt);
	\foreach \j in {-1,...,1} {
		\draw(1,\i) -- (2,\j);
	}
}
\draw[thick,red,->] (1.5,-1) -- (2,-1) -- (1,0) -- (2,0) -- (1,-1) -- (1.5,-1) node[below] {$b$};
\end{tikzpicture}\qquad
\graphfont{\Delta}_w=\begin{tikzpicture}[baseline=-0.5ex,scale=1]
\fill(0,0) circle (2pt);
\fill(0.3,1) circle (2pt) node[above left] {$w$};
\fill(0,0.7) circle (2pt) node[above left] {$w'$};
\draw(0,0) -- node[midway,left] {$e'$} (0,0.7);
\draw(0.3,1) -- node[midway,above] {$e$} (1,1) (0,0) -- (1,0) (0,0) -- (1,-1);
\foreach \i in {-1,...,1} {
	\fill (1,\i) circle (2pt);
	\fill (2,\i) circle (2pt);
	\foreach \j in {-1,...,1} {
		\draw(1,\i) -- (2,\j);
	}
}
\draw[thick,red,->] (1.5,-1) -- (2,-1) -- (1,0) -- (2,0) -- (1,-1) -- (1.5,-1) node[below] {$b$};
\end{tikzpicture}
\]
\caption{The graph $\graphfont{\Delta}$ and a loop cycle in red}
\label{figure:doubled k33}
\end{figure}
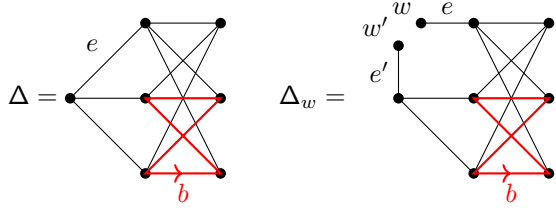

For the first claim, only {\bf K} is not immediate. Writing $\beta=[b]$, the Q-relation implies that
\begin{align*}
(e-e')\beta&=(e-e_0)\beta-(e'-e_0)\beta=\alpha-\alpha',
\end{align*}
where $e_0$ is a fixed edge in the support of $b$, and $\alpha$ and $\alpha'$ are star classes. Since $\alpha=\alpha'$ by Example \ref{example:K33 stars}, the claim follows.

For the second claim, $M(\graphfont{\Delta})$ is spanned in weight $2$ by external products of loop classes, since theta classes or external products involving a star class require at least $3$ particles (this much is true for any graph). On the other hand, there is no pair of disjoint loop cycles in $\graphfont{\Delta}$, since every loop in $\graphfont{\Delta}$ involves at least $4$ of the $7$ vertices.
\end{example}

\begin{remark}
The calculation of Example \ref{example:non-planar} exhibits a previously unknown type of class.
\end{remark}

\subsection{Reduction to the biconnected case} We close this section with a reduction permitting the full use of Proposition \ref{prop:cokernel generators}. The reader is reminded that we maintain Notation \ref{notation:setup}.

\begin{lemma}\label{lem:reduce to biconnected}
If Theorem \ref{thm:reformulation} holds under the further assumption that $\graf_e$ is biconnected, then it holds in general.
\end{lemma}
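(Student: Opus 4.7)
The plan is to induct on the invariant $N_1(\graf_e)$. The base case $N_1(\graf_e) = 1$ is handled by the hypothesis (if $\graf_e$ is biconnected) or by direct inspection of the degenerate subcases $\graf_e \cong \pt$ and $\graf_e \cong \completegraph{1,1}$; in these subcases $\graf$ has very restricted topology---for instance, $\graf_e \cong \completegraph{1,1}$ forces $\graf$ to have first Betti number $1$, covered by Example \ref{example:one loop}---so Theorem \ref{thm:main}, and hence Theorem \ref{thm:reformulation} via Corollary \ref{cor:equivalent}, is already known.

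For the inductive step, assume $N_1(\graf_e) > 1$ and pick a $1$-cut $\{x\}$ of $\graf_e$ whose $\{x\}$-components have strictly smaller $N_1$. By Proposition \ref{prop:cokernel generators}, it suffices to show that every pesky cycle $c = \sum_i p_i a_i + \sum_j q_j b_j$, with rigid star summands $p_i a_i$ and loop summands $q_j b_j$, represents a class in $\psi(M(\graf))$. The structural content of the $1$-cut $\{x\}$ is that any simple loop in $\graf_e$ is confined to a single $\{x\}$-component (since it cannot revisit $x$), while any rigid star support is a $1$-cut of $\graf_e$, hence either equal to $\{x\}$ or contained in a single $\{x\}$-component. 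In the main case where $v$ and $v'$ lie in a common $\{x\}$-component $\graphfont{\Delta}_0$, a path from $w$ to $w'$ in $\graf_w$ whose intermediate portion lies in $\graphfont{\Delta}_0 \setminus \{x\}$ together with Lemma \ref{lem:path argument} absorbs into $\psi(M(\graf))$ every summand whose support is $\{x\}$ or contained in an auxiliary component $\graphfont{\Delta}_i$ for $i \geq 1$. The remaining class is represented by a chain supported entirely in $\graphfont{\Delta}_0$; performing surgery (Proposition \ref{prop:surgery is a thing}) on each non-trivial auxiliary component produces a graph $\graf'$ with $N_1(\graf'_e) < N_1(\graf_e)$, to which the inductive hypothesis applies. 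The lift back to $\graf$ proceeds via Propositions \ref{prop:very basic surgery} and \ref{prop:basic surgery}, whose correction terms are concentrated in the simplified auxiliary regions and so absorbed by a further application of the path argument.

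The principal obstacle arises in the complementary case where $v$ and $v'$ occupy different $\{x\}$-components $\graphfont{\Delta}_0 \ni v$ and $\graphfont{\Delta}_0' \ni v'$. In this configuration, $\{x\}$ itself is a $1$-cut of $\graf_w$ separating $e$ from $e'$, so property \textbf{S} prevents any pesky cycle from being supported at $x$; on the other hand, every path from $w$ to $w'$ in $\graf_w$ is forced to pass through $x$, so the path argument alone cannot absorb summands in $\graphfont{\Delta}_0$ or $\graphfont{\Delta}_0'$. The strategy is to perform separate surgeries on $\graphfont{\Delta}_0$ and $\graphfont{\Delta}_0'$, using the vertex pairs $\{x, v\}$ and $\{x, v'\}$ respectively to replace each component with a single edge while preserving the endpoints of $e$ and the bivalent vertex $w$, and thereby producing a graph of strictly smaller $N_1(\graf_e)$ to which the inductive hypothesis applies. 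Carefully tracking the star-class corrections introduced by each surgery via Proposition \ref{prop:basic surgery}---and verifying that these corrections, whose supports lie inside the surgered components, can themselves be absorbed either by the path argument in the simplified graph or by a previously-handled inductive case---is the most delicate technical point in the proof.
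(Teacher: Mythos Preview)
Your overall strategy---induction on $N_1(\graf_e)$, with the path argument discarding summands away from the essential components and surgery giving access to the inductive hypothesis---matches the paper's, and your base case and main (non-separating) case are essentially correct.

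The gap is in your complementary case. The surgeries you propose along $\graphfont{\Delta}_0$ and $\graphfont{\Delta}_0'$ using the vertex pairs $\{x,v\}$ and $\{x,v'\}$ do not, in general, strictly decrease $N_1(\graf_e)$. Concretely, take $\graf_e$ to be two biconnected graphs glued at $x$ (say two triangles sharing a vertex, with $v$ and $v'$ chosen in different triangles): then $N_1(\graf_e)=2$, and after your surgery on $\graphfont{\Delta}_0$ the new $\graf'_e$ is $\graphfont{\Delta}_0'$ together with a tail edge from $x$ to $v$, in which $x$ is still a $1$-cut, so $N_1(\graf'_e)=2$ as well. If instead you perform both surgeries simultaneously, then $N_1$ does drop, but now the image of $[c]$ under the surgery map vanishes (every summand of $c$ lives in $\graphfont{\Delta}_0\cup\graphfont{\Delta}_0'$, which have been collapsed to edges), so the inductive hypothesis on the simplified graph tells you nothing. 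The star-class corrections from Proposition~\ref{prop:basic surgery} do not rescue this either: that proposition requires the loop's support not to lie entirely in the surgered piece, which is exactly what happens here.

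The paper's fix is to perform the surgery in $\graf_w$ rather than in $\graf_e$, using the vertex pair $\{x,w\}$ (resp.\ $\{x,w'\}$) in place of your $\{x,v\}$ (resp.\ $\{x,v'\}$)---that is, to absorb the tail edge $e$ into the surgered subgraph. After this surgery the new endpoint of the distinguished edge becomes $x$ itself, so the surgered $\graf_e$ is the union of the \emph{remaining} $\{x\}$-components of the old $\graf_e$; the choice of $x$ then guarantees that $N_1$ strictly decreases. The paper also streamlines matters by invoking the Ko--Park direct-sum decomposition of $H_1(B_k(\graf_w))$ to separate summands, which replaces your direct support analysis and handles the star-class terms at $x$ uniformly.
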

\begin{proof}
We proceed by induction on $N_1(\graf_e)$. In the base case of $N_1(\graf_e)=1$, either $\graf_e$ is biconnected, in which case our assumption applies; or $\graf_e$ is an isolated vertex, in which case $\graf$ is homeomorphic to a cycle graph; or $\graf_e$ is homeomorphic to an interval. In this latter case, either $\graf$ is homeomorphic to a cycle graph, or $\graf$ has two essential vertices, each trivalent. The examples of Section \ref{section:rogues gallery} encompass these cases.

Given a $1$-cut $u$ of $\graf_e$ (which is necessarily a $1$-cut of $\graf_w$, and which may coincide with $v$ or $v'$), \cite[Lem. 3.11]{KoPark:CGBG} supplies the decomposition \[H_1(B_k(\graf_w))\cong \left(\bigoplus_{\ell=1}^m H_1(B_k(\graphfont{\Delta}_{\ell}))\oplus \mathbb{Z}^{N(k,\graf_w, u)}\right)\] of Abelian groups, where the $\graphfont{\Delta}_{\ell}$ are the $u$-components of $\graf_w$, and the last term is spanned by star classes at $u$ (see also \cite[Lem. 3.11]{AnDrummondColeKnudsen:SSGBG}). We consider the relationship between the terms of this direct sum decomposition and those of a pesky cycle $c=\sum_{i=1}^rp_ia_i+\sum_{j=1}^sq_jb_j$.

There are two cases. If $u$ separates $e$ and $e'$, then {\bf S} implies that $u$ does not lie in the support of $c$. Thus, each term lies in one of the first $m$ summands. Since $H_1(B_k(\graphfont{\Delta}_{\ell}))\subseteq \psi(M(\graf))$ by the path argument and Proposition \ref{prop:stars and loops generate} if $\graphfont{\Delta}_{\ell}$ does not contain $e$ or $e'$, we may assume by naturality of $\psi$ that $\ell=2$. By Proposition \ref{prop:very basic surgery}, we may write $[c]=\iota^{1}_*\sigma^{1}_*([c])+\iota^{2}_*\sigma^{2}_*([c])$, where $\sigma^{\ell}$ is surgery along $\graphfont{\Delta}_{\ell}$ using the vertices $u$ and either $w$ or $w'$, as appropriate. The claim now follows from $\mathbb{Z}[E]$-linearity of $\sigma^{\ell}_*$, the induction hypothesis, and naturality of $\psi$.

If $u$ does not separate $e$ and $e'$, we may assume by the path argument and naturality of $\psi$ that $\ell=1$, and the claim follows by induction.
\end{proof}

The remainder of the paper is devoted to establishing Theorem \ref{thm:reformulation} under the assumption that $\graf_e$ is biconnected. Although necessarily more elaborate in its details, the strategy is essentially the same, namely to reduce to the triconnected case by induction on the parameter $N_2(\graf_e)$. The primary advantage afforded by biconnectivity is that, through {\bf E} and Proposition \ref{prop:cokernel generators}, each summand of a pesky cycle may be taken to be a loop cycle stabilized at edges internal to its own support.

\section{Base cases}\label{section:base cases}

The goal of this section is to prove Theorem \ref{thm:reformulation} under the assumption that $\graf_e$ is triconnected, a cycle graph, or a theta graph (we maintain Notation \ref{notation:setup}). These cases will form the basis for the induction carried out in Section \ref{section:induction}.

\subsection{The triconnected case} In this section, we prove the following result.

\begin{lemma}\label{lem:triconnected case}
Suppose that $\graf$ is planar and $\graf_e$ is triconnected. If $b$ is a loop cycle in $\graf_w$ with support disjoint from $\{v,v'\}$, then $[b]\in \psi(M(\graf))$.
\end{lemma}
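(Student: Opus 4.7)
The plan is to apply the path argument (Lemma~\ref{lem:path argument}). Writing $C$ for the simple cycle in $\graf_e$ supporting $b$, we have $V(C) \cap \{v, v'\} = \varnothing$ by hypothesis. Since $e$ and $e'$ are tails at $w$ and $w'$ in $\graf_w$, any path from $w$ to $w'$ must traverse both and has interior a $v$-to-$v'$ path in $\graf_e$. It therefore suffices to find a $v$-to-$v'$ path in $\graf_e$ whose vertex set is disjoint from $V(C)$.

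First, I would use planarity of $\graf$ to place $v$ and $v'$ in a common open region bounded by $C$. Fixing a planar embedding of $\graf$, which restricts to the (unique, by Whitney's theorem and triconnectivity) planar embedding of $\graf_e$, the edge $e$ is drawn inside some face $F^*$ of $\graf_e$, so $v, v' \in \partial F^*$. As an open connected subset of the plane disjoint from $C$, the face $F^*$ lies entirely in one of the two open regions bounded by the Jordan curve $C$, call it $R$; hence $v, v' \in R$, as neither lies in $V(C)$.

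Next, I would invoke the following fact from topological graph theory: in a triconnected planar graph, the vertex set strictly contained in one of the two open regions bounded by a simple cycle (in the unique planar embedding) induces a connected subgraph of the ambient graph. Granting this, there is a $v$-to-$v'$ path in $\graf_e$ passing only through vertices of $R$, in particular avoiding $V(C)$; prepending $e$ and appending $e'$ yields the required $w$-to-$w'$ path in $\graf_w$, and the path argument concludes the proof. The main obstacle is a rigorous justification of this connectivity fact; while it is intuitively clear from the planar picture, a careful proof goes through Tutte's characterization of peripheral cycles in triconnected planar graphs as the facial ones, together with a planar separation argument using triconnectivity to rule out multiple components on a single side.
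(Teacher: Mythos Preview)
Your approach has a genuine gap: the connectivity fact you invoke is false. Here is a counterexample. Let $\graf_e$ be the plane graph on vertices $v_1,\dots,v_8,u,u',z$ consisting of the octagon $C=v_1v_2\cdots v_8$, an interior vertex $u$ adjacent to $v_1,v_2,v_3,v_4$, an interior vertex $u'$ adjacent to $v_5,v_6,v_7,v_8$, and an exterior vertex $z$ adjacent to every $v_i$. One checks directly that $\graf_e$ is planar and triconnected. Both $u$ and $u'$ lie in the open disc bounded by $C$ and share the hexagonal face $u\,v_4\,v_5\,u'\,v_8\,v_1$; setting $v=u$, $v'=u'$ and adding $e=uu'$ through that face yields a planar $\graf$ with $\graf_e$ triconnected. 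Yet $\graf_e\setminus V(C)=\{u,u',z\}$ carries no edges, so every $v$--$v'$ path in $\graf_e$ meets $C$, and the loop cycle $b$ supported on $C$ satisfies {\bf P}. The path argument alone cannot dispose of such cycles, and your appeal to Tutte's theorem does not rescue the claim: peripheral cycles are facial, but $C$ here is not peripheral.

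The paper's proof takes this obstruction seriously. After reducing via the path argument to loop cycles satisfying {\bf P}, it observes that two disjoint $v$--$v'$ paths in $\graf_e$ (available by triconnectivity), together with $C$ and $e$, assemble into an embedded \emph{angel graph} $\angelgraph$. A combinatorial analysis (Proposition~\ref{prop:angel}), using planarity to forbid embedded copies of $\completegraph{3,3}$ and triconnectivity to manufacture auxiliary paths, shows that up to equivalence every such angel is \emph{split}: there is an arc joining two distinct components of $C\setminus\{x_1,y_1,x_2,y_2\}$. This arc expresses $b$ as a sum $b_1+b_2$ of loop cycles, each of which either succumbs to the path argument directly or sits in an angel with strictly shorter arcs $(x_i,y_i)$; induction on total arc length then finishes. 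The idea you are missing is that one must \emph{decompose} $b$ rather than avoid it.
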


Before turning to the proof, we establish notation. Using the connectivity assumption, we may find paths $\gamma_1$ and $\gamma_2$ from $v$ to $v'$ in $\graf_e$ disjoint away from $\{v,v'\}$. This claim follows after applying the definition of triconnectivity to the nearest essential vertices to $v$ and $v'$ (which may or may not be $v$ and $v'$ themselves).

By the path argument, we may assume that $b$ satisfies {\bf P}, so the support of $b$ intersects both paths, and we write $x_i$ and $y_i$ for the first and last vertices of the intersection with $\gamma_i$. Although no two points with different index coincide, it may be that $x_i=y_i$. Thus, $\graf$ contains a topologically embedded \emph{angel graph}---see Figure \ref{fig:angels}.

\begin{figure}[ht]
\begin{align*}
&\begin{tikzpicture}[baseline=-.5ex]
\draw[thick] (0,0) circle (0.5);
\draw[fill,thick] (-1,-1) circle (2pt) node[left] {$v$} -- (1,-1) circle (2pt) node[right] {$v'$};
\draw (0,-1) node[below] {$e$};
\draw[thick] (-1,-1) to[out=90,in=135] (135:0.5);
\draw[fill] (135:0.5) circle (2pt) node[above] {$x_1$};
\draw[thick] (1,-1) to[out=90,in=45] (45:0.5);
\draw[fill] (45:0.5) circle (2pt) node[above] {$y_1$};
\draw[thick] (-1,-1) to[out=45,in=-135] (-135:0.5);
\draw[fill] (-135:0.5) circle (2pt) node[below] {\,\,$x_2$};
\draw[thick] (1,-1) to[out=135,in=-45] (-45:0.5);
\draw[fill] (-45:0.5) circle (2pt) node[below] {$y_2$};
\end{tikzpicture}&
&\begin{tikzpicture}[baseline=-.5ex]
\draw[thick] (0,0) circle (0.5);
\draw[fill,thick] (-1,-1) circle (2pt) node[left] {$v$} -- (1,-1) circle (2pt) node[right] {$v'$};
\draw (0,-1) node[below] {$e$};
\draw[thick] (-1,-1) to[out=90,in=135] (135:0.5);
\draw[fill] (135:0.5) circle (2pt) node[above] {$x_1$};
\draw[thick] (1,-1) to[out=90,in=45] (45:0.5);
\draw[fill] (45:0.5) circle (2pt) node[above] {$y_1$};
\draw[thick] (-1,-1) -- (0,-0.5);
\draw[thick] (1,-1) -- (0,-0.5);
\draw[fill] (0,-0.5) circle (2pt) node[above] {$x_2$};
\end{tikzpicture}&
&\begin{tikzpicture}[baseline=-.5ex]
\draw[thick] (0,0) circle (0.5);
\draw[fill,thick] (-1,-1) circle (2pt) node[left] {$v$} -- (1,-1) circle (2pt) node[right] {$v'$};
\draw (0,-1) node[below] {$e$};
\draw[thick] (-1,-1) -- (-1,0.5) arc (180:0:0.5);
\draw[thick] (1,-1) -- (1,0.5) arc (0:180:0.5);
\draw[fill] (0,0.5) circle (2pt) node[below] {$x_1$};
\draw[thick] (-1,-1) -- (0,-0.5);
\draw[thick] (1,-1) -- (0,-0.5);
\draw[fill] (0,-0.5) circle (2pt) node[above] {$x_2$};
\end{tikzpicture}
\end{align*}
\caption{Angels}
\label{fig:angels}
\end{figure}

Note that the four points carry a natural cyclic ordering up to reversal of orientation, which must be $(x_1, y_1, y_2, x_2)$ by planarity; in particular, there is no case in which the pairs of endpoints are linked.

The core of the argument is an analysis of such embedded graphs in the presence of sufficient connectivity. For the sake of narrative flow, this analysis is deferred to Section \ref{section:angelic graphs} below. There we prove Proposition \ref{prop:angel}, which implies that our embedded angel graph extends to an embedding of one of the graphs depicted in Figure \ref{fig:split angels} (we have shown only the ``generic'' case---in the case where $x_1=y_1$ and $x_2=y_2$, only the first possibility occurs, and so forth).

\begin{figure}[ht]
\begin{align*}
&\begin{tikzpicture}[baseline=-.5ex]
\draw[thick] (0,0) circle (0.5);
\draw[fill,thick] (-1,-1) circle (2pt) -- (1,-1) circle (2pt);
\draw[thick] (-1,-1) to[out=90,in=135] (135:0.5);
\draw[fill] (135:0.5) circle (2pt);
\draw[thick] (1,-1) to[out=90,in=45] (45:0.5);
\draw[fill] (45:0.5) circle (2pt);
\draw[thick] (-1,-1) to[out=45,in=-135] (-135:0.5);
\draw[fill] (-135:0.5) circle (2pt);
\draw[thick] (1,-1) to[out=135,in=-45] (-45:0.5);
\draw[fill] (-45:0.5) circle (2pt);
\draw[thick,red] (-0.5,0) -- (0.5,0);
\draw[fill,red] (0:0.5) circle (2pt) (180:0.5) circle (2pt);
\end{tikzpicture}&
&\begin{tikzpicture}[baseline=-.5ex]
\draw[thick] (0,0) circle (0.5);
\draw[fill,thick] (-1,-1) circle (2pt) -- (1,-1) circle (2pt);
\draw[thick] (-1,-1) to[out=90,in=135] (135:0.5);
\draw[fill] (135:0.5) circle (2pt);
\draw[thick] (1,-1) to[out=90,in=45] (45:0.5);
\draw[fill] (45:0.5) circle (2pt);
\draw[thick] (-1,-1) to[out=45,in=-135] (-135:0.5);
\draw[fill] (-135:0.5) circle (2pt);
\draw[thick] (1,-1) to[out=135,in=-45] (-45:0.5);
\draw[fill] (-45:0.5) circle (2pt);
\draw[thick,red] (0, 0.5) arc (180:270:0.5);
\draw[fill,red] (0,0.5) circle (2pt);
\draw[fill,red] (0.5,0) circle (2pt);
\end{tikzpicture}&
&\begin{tikzpicture}[baseline=-.5ex]
\draw[thick] (0,0) circle (0.5);
\draw[fill,thick] (-1,-1) circle (2pt) -- (1,-1) circle (2pt);
\draw[thick] (-1,-1) to[out=90,in=135] (135:0.5);
\draw[fill] (135:0.5) circle (2pt);
\draw[thick] (1,-1) to[out=90,in=45] (45:0.5);
\draw[fill] (45:0.5) circle (2pt);
\draw[thick] (-1,-1) to[out=45,in=-135] (-135:0.5);
\draw[fill] (-135:0.5) circle (2pt);
\draw[thick] (1,-1) to[out=135,in=-45] (-45:0.5);
\draw[fill] (-45:0.5) circle (2pt);
\draw[thick,red] (0, -0.5) arc (0:90:0.5);
\draw[fill,red] (180:0.5) circle (2pt);
\draw[fill,red] (270:0.5) circle (2pt);
\end{tikzpicture}&
&\begin{tikzpicture}[baseline=-.5ex]
\draw[thick] (0,0) circle (0.5);
\draw[fill,thick] (-1,-1) circle (2pt) -- (1,-1) circle (2pt);
\draw[thick] (-1,-1) to[out=90,in=135] (135:0.5);
\draw[fill] (135:0.5) circle (2pt);
\draw[thick] (1,-1) to[out=90,in=45] (45:0.5);
\draw[fill] (45:0.5) circle (2pt);
\draw[thick] (-1,-1) to[out=45,in=-135] (-135:0.5);
\draw[fill] (-135:0.5) circle (2pt);
\draw[thick] (1,-1) to[out=135,in=-45] (-45:0.5);
\draw[fill] (-45:0.5) circle (2pt);
\draw[thick,red] (0,0.5) -- (0,-0.5);
\draw[fill,red] (0,0.5) circle (2pt);
\draw[fill,red] (270:0.5) circle (2pt);
\end{tikzpicture}
\end{align*}
\caption{Some split angels}
\label{fig:split angels}
\end{figure}
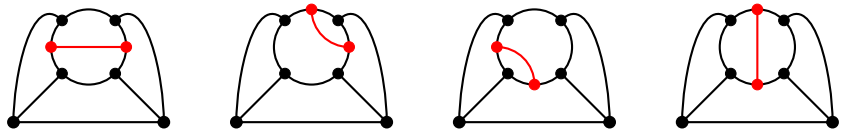

\begin{proof}[Proof of Lemma \ref{lem:triconnected case}]
We refer to Figure \ref{fig:split angels}. The first case expresses $b$ as the sum of loop cycles $b_1$ and $b_2$ to which the path argument applies---see Figure \ref{fig:splitting loop}. Note that this case includes the degenerate case where $x_1=y_1$ and $x_2=y_2$.

The second case expresses $b$ as the sum of loop cycles $b_1$ and $b_2$ such that the path argument applies to $b_2$ and the support of $b_1$ lies in an embedded angel graph with $x_1$ strictly closer to $y_1$. Similar remarks apply to the third case mutatis mutandis, while the fourth case expresses $b$ as the sum of two loop cycles, each of whose supports lies in an embedded angel graph with $x_i$ strictly closer to $y_i$ for both $i$. We now proceed by induction on the sum of the edge lengths of the arcs $(x_1,y_1)$ and $(x_2,y_2)$ in $b$, the base case being the degenerate case already considered.
\end{proof}

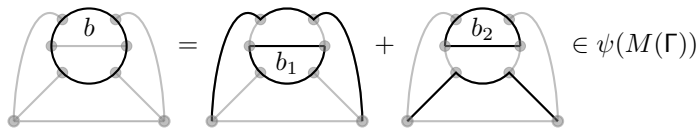
\begin{figure}
\[
\begin{tikzpicture}[baseline=-.5ex]
\begin{scope}[gray,opacity=0.5]
\draw[fill,thick] (-1,-1) circle (2pt) -- (1,-1) circle (2pt);
\draw[thick] (-1,-1) to[out=90,in=135] (135:0.5);
\draw[fill] (135:0.5) circle (2pt);
\draw[thick] (1,-1) to[out=90,in=45] (45:0.5);
\draw[fill] (45:0.5) circle (2pt);
\draw[thick] (-1,-1) to[out=45,in=-135] (-135:0.5);
\draw[fill] (-135:0.5) circle (2pt);
\draw[thick] (1,-1) to[out=135,in=-45] (-45:0.5);
\draw[fill] (-45:0.5) circle (2pt);
\draw[thick] (-0.5,0) -- (0.5,0);
\draw[fill] (0:0.5) circle (2pt) (180:0.5) circle (2pt);
\end{scope}
\draw[thick] (0,0) node[above] {$b$} circle (0.5);
\end{tikzpicture}=
\begin{tikzpicture}[baseline=-.5ex]
\begin{scope}[gray,opacity=0.5]
\draw[thick] (0,0) circle (0.5);
\draw[thick] (-1,-1) -- (1,-1);
\draw[thick] (-1,-1) to[out=90,in=135] (135:0.5);
\draw[fill] (135:0.5) circle (2pt);
\draw[thick] (1,-1) to[out=90,in=45] (45:0.5);
\draw[fill] (45:0.5) circle (2pt);
\draw[thick] (-1,-1) to[out=45,in=-135] (-135:0.5);
\draw[fill] (-135:0.5) circle (2pt);
\draw[thick] (1,-1) to[out=135,in=-45] (-45:0.5);
\draw[fill] (-45:0.5) circle (2pt);
\end{scope}
\draw[thick] (180:0.5) arc (180:360:0.5);
\draw[thick] (-0.5,0) -- (0.5,0);
\draw[fill,gray,opacity=0.5] (0:0.5) circle (2pt) (180:0.5) circle (2pt);
\draw[thick] (-1,-1) to[out=90,in=135] (135:0.5) arc (135:45:0.5) to[out=45,in=90] (1,-1);
\draw (-90:0.5) node[above] {$b_1$};
\draw[fill,thick,gray,opacity=0.5] (-1,-1) circle (2pt) (1,-1) circle (2pt);
\end{tikzpicture}+
\begin{tikzpicture}[baseline=-.5ex]
\begin{scope}[gray,opacity=0.5]
\draw[thick] (0,0) circle (0.5);
\draw[thick] (-1,-1) -- (1,-1);
\draw[thick] (-1,-1) to[out=90,in=135] (135:0.5);
\draw[fill,gray,opacity=0.5] (135:0.5) circle (2pt);
\draw[thick] (1,-1) to[out=90,in=45] (45:0.5);
\draw[fill,gray,opacity=0.5] (45:0.5) circle (2pt);
\draw[thick] (-1,-1) to[out=45,in=-135] (-135:0.5);
\draw[fill,gray,opacity=0.5] (-135:0.5) circle (2pt);
\draw[thick] (1,-1) to[out=135,in=-45] (-45:0.5);
\draw[fill,gray,opacity=0.5] (-45:0.5) circle (2pt);
\end{scope}
\draw[thick] (180:0.5) arc (180:0:0.5);
\draw[thick] (-0.5,0) -- (0.5,0);
\draw[fill,gray,opacity=0.5] (0:0.5) circle (2pt) (180:0.5) circle (2pt);
\draw[thick] (-1,-1) -- (-135:0.5) arc (-135:-45:0.5) -- (1,-1);
\draw (90:0.5) node[below] {$b_2$};
\draw[fill,thick,gray,opacity=0.5] (-1,-1) circle (2pt) (1,-1) circle (2pt);
\end{tikzpicture}\in\psi(M(\graf))
\]
\caption{Splitting a loop cycle}
\label{fig:splitting loop}
\end{figure}

We are now in a position to complete the proof of our base cases.

\begin{proposition}\label{prop:base case}
The conclusion of Theorem \ref{thm:reformulation} holds under the further assumption that $\graf_e$ is biconnected and $N_2(\graf_e)=1$.
\end{proposition}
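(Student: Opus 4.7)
By Theorem~\ref{thm:decomposition theory}, the hypothesis $N_2(\graf_e)=1$ forces $\graf_e$ to be triconnected, homeomorphic to a cycle, or homeomorphic to $\thetagraph{n}$ for some $n\geq 3$. My plan is to dispatch each case separately.

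In the triconnected case, biconnectivity of $\graf_e$ makes both $\{v\}$ and $\{v'\}$ into $1$-cuts of $\graf_w$ separating $e$ and $e'$, so the second conclusion of Proposition~\ref{prop:cokernel generators} reduces us to pesky cycles $c=\sum_j q_jb_j$ whose loop summands have support disjoint from $\{v,v'\}$ by property~{\bf S}. Lemma~\ref{lem:triconnected case} then places each $[b_j]$, and hence $[c]$, in $\psi(M(\graf))$. When $\graf_e$ is a cycle, $\graf\cong\thetagraph{3}$ has exactly two essential vertices, and the no-pesky-cycle argument of Example~\ref{example:theta} applies verbatim: every cycle of $\graf_w$ passes through both $v$ and $v'$, so every loop or rigid star cycle in $\graf_w$ violates~{\bf S}.

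The theta case requires more care. Labeling the essential vertices of $\graf_e\cong\thetagraph{n}$ as $u_1,u_2$ and the $n$ paths between them as $P_1,\ldots,P_n$, with $v\in P_\alpha$ and $v'\in P_\beta$, every cycle of $\graf_w$ is a loop cycle $b_{ij}$ using $P_i\cup P_j$ and containing both $u_1,u_2$ in its support. I would then enumerate subcases. If $\{v,v'\}=\{u_1,u_2\}$, then $\graf\cong\thetagraph{n+1}$ has two essential vertices and Example~\ref{example:theta} applies. If exactly one of $v,v'$ is essential in $\graf_e$, this vertex lies in every loop cycle's support and {\bf S} kills all pesky cycles. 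If neither is essential and $\alpha=\beta$, the segment of $P_\alpha$ between $v$ and $v'$ gives a path in $\graf_w$ from $w$ to $w'$ disjoint from the support of any $b_{ij}$ with $i,j\neq\alpha$, so {\bf P} kills all pesky cycles. If $\alpha\neq\beta$ and $n=3$, no pair $(i,j)$ is disjoint from $\{\alpha,\beta\}$, so {\bf S} still kills pesky cycles. The substantive subcase is $\alpha\neq\beta$ with $n\geq 4$, where pesky loop cycles genuinely exist.

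For this last subcase I would exploit that $v,v'$ carry the $\varnothing$-factor in any such $b_{ij}$: the identity $\partial((h_v^e-h_v^{e_1^{(1)}})\cdot b_{ij})=(e-e_1^{(1)})\cdot b_{ij}$, and the analogue at $v'$, rewrite $(e-e')[b_{ij}]$ as $(e_1^{(1)}-e_2^{(1)})[b_{ij}]$, where $e_1^{(1)}$ and $e_2^{(1)}$ are the edges of $P_\alpha,P_\beta$ adjacent to $u_1$. Iterating the $Q$-relation at $u_1$ (and $u_2$) and the $\theta$-relation in the theta-subgraph spanned by $P_\alpha,P_\beta,P_i,P_j$ then converts any pesky combination into a sum of star classes supported within $P_\alpha\cup P_\beta\cup\{u_1,u_2\}$; these are handled by the path argument via a path from $w$ to $w'$ routed through $P_i$ or $P_j$, in the same spirit as Examples~\ref{example:weird theta 1} and~\ref{example:weird theta 2}. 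This subcase is the main obstacle: the combinatorial violations of {\bf P} and {\bf S} that dispatch everything else are no longer available, and one must invoke the algebraic structure of the $Q$- and $\theta$-relations explicitly, leaning on planarity (which constrains $P_\alpha,P_\beta$ to be cyclically adjacent among the $P_i$) to keep the bookkeeping manageable.
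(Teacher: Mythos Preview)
Your triconnected case matches the paper's argument exactly, and the cycle case together with theta subcases 1--4 are correct in substance (in subcase 3 you should add that loop cycles $b_{i\alpha}$ using $P_\alpha$ are excluded by {\bf S}, since their support contains $v$ and $v'$; only the remaining $b_{ij}$ are killed by {\bf P}).

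Subcase 5 has a genuine gap. The identity you invoke and the subsequent Q- and $\theta$-manipulations compute $\delta([b_{ij}])=(e-e')[b_{ij}]$ and rewrite it as a combination of star classes at $u_1,u_2$. But this is the wrong quantity: property {\bf K} already asserts $\delta([c])=0$, and what you must do is exhibit a class in $M(\graf)$ whose image under $\psi$ is $[c]$. Your relations transform $\delta$ of things, not the things themselves, and no lift is produced. The phrase ``converts any pesky combination into a sum of star classes'' conflates $[c]$ with $\delta([c])$. (Incidentally, your path ``routed through $P_i$ or $P_j$'' cannot avoid $u_1$ or $u_2$; the path that avoids $u_1$ runs through $P_\alpha$, $u_2$, and $P_\beta$, and star classes at $u_1,u_2$ are in $\psi(M(\graf))$ by Lemma \ref{lem:non-rigid in image} anyway---but none of this helps until you have actually expressed $[c]$, not $\delta([c])$, in such terms.)

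The paper sidesteps this entirely. When $\graf_e$ is a cycle or homeomorphic to $\thetagraph{n}$, the graph $\graf$ is among those treated in Examples \ref{example:theta}--\ref{example:weird theta 2}, where Theorem \ref{thm:main} is proved directly. From $M(\graf)=H_2(B(\graf))$ and exactness one obtains $\psi(M(\graf))=\ker(\delta)$, so Theorem \ref{thm:reformulation} follows for \emph{every} bivalent $w$, including the one on $e$. The key point is that those examples establish Theorem \ref{thm:main} by exploding at a vertex on one of the theta-arcs (the green vertex in Figure \ref{fig:theta with markings}), not on the added edge; for \emph{that} choice of explosion there are no nonzero pesky cycles, and an induction on the first Betti number goes through without difficulty. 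Your direct attack on the fixed $w$ forfeits exactly this freedom, which is why the argument jams in the one subcase where pesky cycles genuinely exist.
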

\begin{proof}
If $\graf_e$ is a cycle graph or homeomorphic to $\thetagraph{n}$ for some $n\geq3$, then $\graf$ is among the examples considered in Section \ref{section:rogues gallery}, so we may assume by Theorem \ref{thm:decomposition theory} that $\graf_e$ is triconnected. 

Let $c=\sum_{j=1}^s q_jb_j$ be a pesky cycle. Each $b_j$ satisfies {\bf P} by definition, and {\bf S} implies that the support of $b_j$ does not intersect $\{v,v'\}$. Thus, Lemma \ref{lem:triconnected case} implies that $[b_j]\in \psi(M(\graf))$ for each $j$, whence $[c]\in \psi(M(\graf))$. Proposition \ref{prop:cokernel generators} now implies that $\ker(\delta)/\psi(M(\graf))=0$, as desired.
\end{proof}

\subsection{Angelic graphs}\label{section:angelic graphs}

In this section, we supply the missing ingredient in the proof of Lemma \ref{lem:triconnected case}. 

We employ uniform notation when dealing with angel graphs, some of which is indicated in Figure \ref{fig:angels}. The corner vertices are denoted $v$ and $v'$, and the edge connecting them is denoted $e$. The central loop is denoted $S$, and the two unique paths from $v$ to $v'$ avoiding $e$ and involving at most two vertices of $S$ are denoted $\gamma_1$ and $\gamma_2$, respectively. Considering $\gamma_i$ as oriented from $v$ to $v'$, the first point of intersection of $\gamma_i$ with $S$ is denoted $x_i$ and the last is denoted $y_i$ (it may be that $x_i=y_i$).

\begin{definition}\label{def:split angels}
A \emph{split angel} is an angel together an additional edge connecting two distinct components of $S\setminus \{x_1,y_1,x_2,y_2\}$.
\end{definition}

Some split angels are depicted in Figure \ref{fig:split angels}. 

\begin{definition} An \emph{angelic graph} is a topological embedding $\angelgraph\to \graf,$ where $\angelgraph$ is an angel. If the embedding extends to an embedding of a split angel, then the angelic graph is said to be \emph{split}.
\end{definition}

We abuse notation in referring to an angelic graph by the letter $\graf$, as well as by identifying parts of $\Omega$ with their images in $\graf$.

\begin{definition}
The angelic graphs $\Omega_1\to \graf$ and $\Omega_2\to \graf$ are called \emph{equivalent} if there are commuting diagrams of the following form:\[
\begin{tikzcd}
\relax[0,1]\arrow[d,"e"']\arrow[r,"e"]&\Omega_1\arrow[d]&& S^1\arrow[r,"S"]\arrow[d,"S"']&\Omega_1\arrow[d]\\
\Omega_2\arrow[r]&\graf&&\Omega_2\arrow[r]&\graf.
\end{tikzcd}\]
\end{definition}

\begin{proposition}\label{prop:angel}
Let $\graf$ be an angelic graph satisfying the following three conditions:
\begin{enumerate}
\item\label{planar} $\graf$ is planar 
\item\label{connected} $\graf\setminus e$ is triconnected, and
\item\label{P} $v$ and $v'$ lie in distinct components of $\graf \setminus (e\cup S)$.
\end{enumerate}
Then $\graf$ is split up to equivalence.
\end{proposition}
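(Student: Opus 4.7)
The plan is to combine the triconnectivity of $\graf \setminus e$ with the planar cyclic structure on the four marked points of $S$, using the flexibility afforded by the equivalence relation on angelic graphs (which permits re-routing $\gamma_1$ and $\gamma_2$ as long as $e$ and $S$ are preserved). I would first fix the cyclic order $x_1, y_1, y_2, x_2$ on $S$ provided by planarity, and denote the open arcs of $S \setminus \{x_1, y_1, x_2, y_2\}$ by $A_1, A_2, A_3, A_4$ in cyclic order (with $A_1$ bounded by $x_1$ and $y_1$, etc.). Let $B_v$ and $B_{v'}$ be the bridges of $S$ in $\graf \setminus e$ containing $v$ and $v'$; hypothesis (3) guarantees they are distinct, and the angel structure yields $\mathrm{att}(B_v) \supseteq \{x_1, x_2\}$ and $\mathrm{att}(B_{v'}) \supseteq \{y_1, y_2\}$.

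The central observation is that a bridge of $S$ in $\graf \setminus e$ with nonempty interior and only two attachment vertices would realize a 2-cut, violating triconnectivity; hence $|\mathrm{att}(B_v)|, |\mathrm{att}(B_{v'})| \geq 3$, and similarly for every other non-chord bridge. Applying the same 2-cut logic to the interleaving pair $\{x_1, y_2\}$, I would argue that removing it cannot disconnect the two resulting half-arcs of $S$ (one containing $y_1$, the other $x_2$), because after the cut $B_v$ attaches only to the $x_2$-side via its forced attachments while $B_{v'}$ attaches only to the $y_1$-side. Therefore some bridge or chord of $S$ must span the two sides. If this spanning element has attachments in two distinct open arcs $A_i, A_j$, then an embedded subpath within it furnishes the extra edge of a split angel immediately. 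The same analysis applied to the other interleaving pair $\{y_1, x_2\}$ and to the four non-interleaving pairs produces further spanning elements.

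When all spanning attachments happen to sit at marked points rather than in open arcs, I would invoke the equivalence relation: the third attachment of $B_v$ (or $B_{v'}$) guaranteed above must lie somewhere on $S$, and if it lies in an open arc $A_i$ I re-route $\gamma_1$ (resp. $\gamma_2$) through $B_v$ to terminate at this attachment, shifting the corresponding marked point into $A_i$ and producing a new equivalent angelic embedding whose marked-point cyclic structure has strictly shifted. After relabeling, the previous case analysis either directly produces the split or can be iterated. The degenerate cases $x_i = y_i$ only reduce the number of arcs and simplify the analysis. The main obstacle will be controlling these re-routings so that finitely many equivalence moves suffice to place the spanning attachments in two distinct open arcs; planarity is decisive here, for the non-interleaved arrangement $\{x_1,x_2\}$ versus $\{y_1,y_2\}$ forces $B_v$ and $B_{v'}$ (together with $e$) to lie on a common side of $S$ in any planar embedding of $\graf$, which tightly restricts the allowable bridge configurations and, after finitely many re-routings, rules out the marked-point-only spanning scenario.
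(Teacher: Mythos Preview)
Your bridge-theoretic framework captures the right ingredients---triconnectivity forces extra attachments, equivalence permits re-routing the $\gamma_i$, and planarity constrains the configuration---but the argument is not complete as written. You explicitly identify the gap yourself: ``The main obstacle will be controlling these re-routings so that finitely many equivalence moves suffice.'' What is missing is a concrete integer invariant that strictly decreases under each re-routing, and a precise statement of how planarity is used. Saying that the marked-point structure has ``strictly shifted'' is not enough; shifting a marked point into an open arc does not obviously reduce any quantity, and without such a quantity the iteration could in principle cycle. Similarly, the claim that planarity ``rules out the marked-point-only spanning scenario'' is asserted, not argued: you need to say exactly which forbidden minor (or which Jordan-curve separation) contradicts the hypothetical configuration.

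The paper's proof closes both gaps simultaneously by working with the specific pair $\{y_1,x_2\}$ rather than all interleaving pairs. A preliminary lemma shows that, up to equivalence, each component of $S\setminus\{y_1,x_2\}$ contains an essential vertex; this is where planarity enters, via an explicit embedded $\completegraph{3,3}$ obstruction when no such vertex exists. Triconnectivity then produces a path between the two components avoiding $e$, $y_1$, and $x_2$, and condition (\ref{P}) forces this path to miss one of the $\gamma_i$ away from $S$. A short case analysis on where the endpoints of this path land shows that either the angel is split or one can re-route a $\gamma_i$ so that the \emph{edge length} of $\gamma_i\cap S$ strictly decreases. That edge-length sum is the invariant driving the induction, and it is the piece your proposal is missing.
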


The key input to the proof will be the following result, which should be thought of as an infinitesimal version of Proposition \ref{prop:angel}.

\begin{lemma}\label{lem:essential vertices}
Let $\graf$ be an angelic graph satisfying the hypotheses of Proposition \ref{prop:angel}. Up to equivalence, each component of $S\setminus\{y_1,x_2\}$ contains an essential vertex.
\end{lemma}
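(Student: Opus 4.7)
My plan is to distinguish cases according to whether $x_1 = y_1$ or $x_2 = y_2$; the vertex-disjointness of $\gamma_1$ and $\gamma_2$ away from $v, v'$ rules out any other coincidences among $x_1, y_1, x_2, y_2$. Each of these four vertices is essential in $\graf$, contributing two $S$-edges by lying on $S$ and at least one additional edge from its respective $\gamma_i$.

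In the generic case where $x_1 \neq y_1$ and $x_2 \neq y_2$, the planarity-forced cyclic order $(x_1, y_1, y_2, x_2)$ places $x_1$ into the interior of one component of $S \setminus \{y_1, x_2\}$ and $y_2$ into the interior of the other. Both components then contain essential vertices, and no modification of the angelic structure is required.

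When $\gamma_1$ or $\gamma_2$ meets $S$ at a single vertex, I would invoke the equivalence and reduce to the generic case by producing a replacement path meeting $S$ at two distinct isolated vertices. By triconnectivity of $\graf \setminus e$ there exists a third path $\pi$ from $v$ to $v'$ vertex-disjoint from $\gamma_1$ and $\gamma_2$; by hypothesis~(3), $\pi$ must enter $S$, providing a new essential vertex $u$ on $S$ outside $\{x_1, y_1, x_2, y_2\}$. Splicing a portion of the offending $\gamma_i$ with a portion of $\pi$ through a connecting path inside a component of $\graf \setminus (e \cup S)$ adjacent to both $x_i$ and $u$ should produce a modified path meeting $S$ in precisely the two isolated vertices $\{x_i, u\}$, yielding a new angelic structure to which the generic analysis applies.

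The main obstacle will be executing the splice while preserving vertex-disjointness from the other $\gamma_j$ and the angel constraint that each path meets $S$ at at most two isolated vertices. This will require leveraging the planarity hypothesis to control the non-crossing ``chord diagram'' of $\gamma_1, \gamma_2, \pi$ on $S$, together with the connectivity properties of the components of $\graf \setminus (e \cup S)$ inherited from triconnectivity of $\graf \setminus e$, to guarantee a connecting path avoiding $\gamma_j$. The most delicate subcase is when all three of $\gamma_1, \gamma_2, \pi$ meet $S$ at a single vertex each; here planarity is essential to exclude the worst configurations (in the spirit of the $K_5$ obstruction to adding the edge $e$), and a careful choice of which pair of paths to designate as $\gamma_1', \gamma_2'$ up to equivalence is what finally places essential vertices in both arcs.
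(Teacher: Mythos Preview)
Your treatment of the generic case $x_1\neq y_1$, $x_2\neq y_2$ is correct and coincides with the paper's.

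The degenerate case, however, has a genuine gap. The claim that triconnectivity of $\graf\setminus e$ furnishes a third path $\pi$ from $v$ to $v'$ internally disjoint from $\gamma_1$ and $\gamma_2$ is not valid in general. Triconnectivity is defined via the minimal simplicial model $(\graf_e)_\Delta$, and $v$ need not survive as a vertex there: if $v$ is trivalent in $\graf$, it is bivalent in $\graf_e$ and gets smoothed. In that situation only two half-edges leave $v$ in $\graf_e$, and both are already used by $\gamma_1$ and $\gamma_2$, so no third internally disjoint path from $v$ exists. The paper flags exactly this subtlety when it constructs $\gamma_1,\gamma_2$ by passing to the nearest essential vertices. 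Beyond this, the splicing step is left as an acknowledged obstacle rather than an argument; ensuring the spliced path meets $S$ in exactly two isolated points while staying disjoint from the other $\gamma_j$ is nontrivial and not addressed.

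The paper proceeds quite differently. Assuming $x_2=y_2$ and that one arc $S''\subseteq S\setminus\{y_1,x_2\}$ has no essential vertex, it uses triconnectivity not to build paths between $v$ and $v'$ but to connect the other arc $S'$ to the components of $\Omega$ minus the open star of $\{y_1,x_2\}$, avoiding $y_1$, $x_2$, $e$, and $S''$. In the fully degenerate case $x_1=y_1$, $x_2=y_2$ these paths assemble into an embedded $\completegraph{3,3}$, contradicting planarity. In the remaining case $x_1\neq y_1$, either the same $\completegraph{3,3}$ contradiction arises or one can reroute $\gamma_1$ through the new path to strictly shorten the edge length of $\gamma_1\cap S$; induction on this length finishes. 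This strategy never requires a third $v$-to-$v'$ path and so avoids the degree obstruction at $v,v'$ entirely.
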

\begin{proof}
If $x_1\neq y_1$ and $x_2\neq y_2$, then all four are essential, and there is nothing to show, so we may assume that $x_2=y_2$. Denote the components in question by $S'$ and $S''$. We may assume that $S'$ contains an essential vertex; indeed, in a minimal simplicial representative of $\graf$, $S$ contains some vertex other than $y_1$ and $x_2$, and assuming all such vertices to be bivalent implies that $\{y_1,x_2\}$ is a $2$-cut, contradicting (\ref{connected}). 

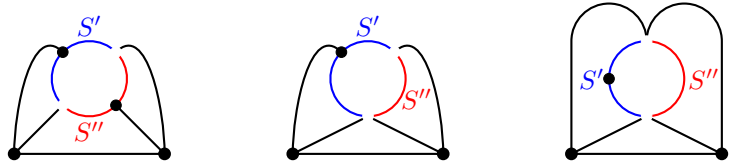
\begin{figure}[ht]
\begin{align*}
&\begin{tikzpicture}[baseline=-.5ex]
\draw[thick,blue] (45:0.5) arc (45:225:0.5) (90:0.7) node {$S'$};
\draw[thick,red] (225:0.5) arc (-135:45:0.5) (-90:0.7) node {$S''$};
\draw[fill,thick] (-1,-1) circle (2pt) -- (1,-1) circle (2pt);
\draw[thick] (-1,-1) to[out=90,in=135] (135:0.5);
\draw[fill] (135:0.5) circle (2pt);
\draw[thick] (1,-1) to[out=90,in=45] (45:0.5);
\draw[fill,white] (45:0.5) circle (2pt);
\draw[thick] (-1,-1) to[out=45,in=-135] (-135:0.5);
\draw[fill,white] (-135:0.5) circle (2pt);
\draw[thick] (1,-1) to[out=135,in=-45] (-45:0.5);
\draw[fill] (-45:0.5) circle (2pt);
\end{tikzpicture}&
&\begin{tikzpicture}[baseline=-.5ex]
\draw[thick,blue] (45:0.5) arc (45:270:0.5) (90:0.7) node {$S'$};
\draw[thick,red] (270:0.5) arc (-90:45:0.5) (-22.5:0.7) node {$S''$};
\draw[fill,thick] (-1,-1) circle (2pt) -- (1,-1) circle (2pt);
\draw[thick] (-1,-1) to[out=90,in=135] (135:0.5);
\draw[fill] (135:0.5) circle (2pt);
\draw[thick] (1,-1) to[out=90,in=45] (45:0.5);
\draw[fill,white] (45:0.5) circle (2pt);
\draw[thick] (-1,-1) -- (0,-0.5);
\draw[thick] (1,-1) -- (0,-0.5);
\draw[fill,white] (0,-0.5) circle (2pt);
\end{tikzpicture}&
&\begin{tikzpicture}[baseline=-.5ex]
\draw[thick,blue] (90:0.5) arc (90:270:0.5) (180:0.7) node {$S'$\,};
\draw[thick,red] (270:0.5) arc (-90:90:0.5) (0:0.7) node {\,\,$S''$};
\draw[fill] (-0.5,0) circle (2pt);
\draw[fill,thick] (-1,-1) circle (2pt) -- (1,-1) circle (2pt);
\draw[thick] (-1,-1) -- (-1,0.5) arc (180:0:0.5);
\draw[thick] (1,-1) -- (1,0.5) arc (0:180:0.5);
\draw[fill,white] (0,0.5) circle (2pt);
\draw[thick] (-1,-1) -- (0,-0.5);
\draw[thick] (1,-1) -- (0,-0.5);
\draw[fill,white] (0,-0.5) circle (2pt);
\end{tikzpicture}
\end{align*}
\caption{The connected components $S'$ and $S''$ of $S\setminus\{y_1,x_2\}$}
\label{fig:complement of angels}
\end{figure}

We may further assume that $S''$ contains no essential vertex, in which case (\ref{connected}) guarantees paths in $\graf$ connecting $S'$ to each component of the complement in $\Omega$ of the star of $\{y_1,x_2\}$, which furthermore avoid $e$, $y_1$, $x_2$, and $S''$. For the last, we have used that $S''$ is contained in the open star of $\{y_1,x_2\}$ in a minimal simplicial representative.

If $x_1=y_1$ and $x_2=y_2$, then there are two components and hence two paths. By (\ref{P}) may assume that these paths are disjoint away from $S$ and that neither intersects both $\gamma_1$ and $\gamma_2$ away from $S$. Using these paths, we obtain a topologically embedded $\completegraph{3,3}$, contradicting (\ref{planar})---see Figure \ref{fig:angel bipartite}. We take this case as the base case in an induction on the edge length of $\gamma_1\cap S$  as calculated in $\graf$ (in $\Omega$, this edge length is always either $0$ or $1$).

For the induction step, assume that $x_1\neq y_1$, so there is only one path $\gamma$. A similar contradiction of (\ref{planar}) as above is achieved unless both endpoints of $\gamma$ lie on $\gamma_1$. Replacing a segment of $\gamma_1$ with $\gamma$ now produces an equivalent angelic graph in which $\gamma_1\cap S$ has strictly shorter edge length---see Figure \ref{fig:induction step}.
\end{proof}

\begin{figure}[ht]
\begin{align*}&\begin{tikzpicture}[baseline=-.5ex,xscale=-1]
\draw[thick] (0,-0.5) arc (270:90:0.5);
\draw[dashed] (0,-0.5) arc (-90:90:0.5);
\draw[fill,thick] (-1,-1) circle (2pt) -- (1,-1) circle (2pt);
\draw[thick] (-1,-1) -- (-1,0.5) arc (180:0:0.5);
\draw[thick] (1,-1) -- (1,0.5) arc (0:180:0.5);
\draw[fill] (0,0.5) circle (2pt);
\draw[thick] (-1,-1) -- (0,-0.5);
\draw[thick] (1,-1) -- (0,-0.5);
\draw[fill] (0,-0.5) circle (2pt);
\draw[fill,red] (0.5, -0.75) circle (2pt);
\draw[fill,red] (-1,0.5) circle (2pt);
\draw[fill,red] (0.5,0) circle (2pt);
\draw[thick,red] (0.5,0) -- (0.5,-0.75);
\draw[thick,red] (0.5,0) -- (-1,0.5);
\end{tikzpicture}&
&\begin{tikzpicture}[baseline=-.5ex,xscale=-1]
\draw[thick] (0,-0.5) arc (270:90:0.5);
\draw[thick] (0,-0.5) arc (-90:0:0.5);
\draw[dashed] (0.5,0) arc (0:90:0.5);
\draw[fill,thick] (-1,-1) circle (2pt) -- (1,-1) circle (2pt);
\draw[thick] (-1,-1) -- (-1,0.5) arc (180:0:0.5);
\draw[thick] (1,-1) -- (1,0.5) arc (0:180:0.5);
\draw[fill] (0,0.5) circle (2pt);
\draw[thick] (-1,-1) -- (0,-0.5);
\draw[dashed] (1,-1) -- (0,-0.5);
\draw[fill] (0,-0.5) circle (2pt);
\draw[fill,red] (1, 0) circle (2pt);
\draw[fill,red] (-1,0.5) circle (2pt);
\draw[fill,red] (0.5,0) circle (2pt);
\draw[thick,red] (0.5,0) -- (1,0);
\draw[thick,red] (0.5,0) -- (-1,0.5);
\end{tikzpicture}&
&\begin{tikzpicture}[baseline=-.5ex,xscale=-1]
\draw[thick] (0,-0.5) arc (270:90:0.5);
\draw[thick] (0,-0.5) arc (-90:0:0.5);
\draw[dashed] (0.5,0) arc (0:90:0.5);
\draw[fill,thick] (-1,-1) circle (2pt) -- (1,-1) circle (2pt);
\draw[thick] (-1,-1) -- (-1,0.5) arc (180:0:0.5);
\draw[thick] (1,-1) -- (1,0.5) arc (0:180:0.5);
\draw[fill] (0,0.5) circle (2pt);
\draw[thick] (-1,-1) -- (0,-0.5);
\draw[dashed] (1,-1) -- (0,-0.5);
\draw[fill] (0,-0.5) circle (2pt);
\draw[fill,red] (-1,0.5) circle (2pt);
\draw[fill,red] (0.5,0) circle (2pt);
\draw[thick,red] (0.5,0) -- (1,-1);
\draw[thick,red] (0.5,0) -- (-1,0.5);
\end{tikzpicture}
\end{align*}
\caption{Embedded copies of $\completegraph{3,3}$}
\label{fig:angel bipartite}
\end{figure}

\begin{figure}[ht]
\begin{align*}
&\begin{tikzpicture}[baseline=-.5ex,xscale=-1]
\draw[thick] (90:0.5) arc (90:240:0.5);
\draw[thick] (-1,-1) to[out=90,in=135] (135:0.5);
\draw[thick] (1,-1) to[out=90,in=45] (45:0.5);
\draw[fill,white] (45:0.5) circle (5pt);
\draw[thick, red] (0.5,0.45) -- (0,0.5);
\draw[fill,red] (0.5,0.45) circle (2pt);
\draw[fill,red] (90:0.5) circle(2pt); 
\draw[fill] (240:0.5) circle (2pt);
\draw[fill] (135:0.5) circle (2pt);
\draw[thick,fill] (-1,-1) -- (-0.5,-0.75) circle (2pt);
\draw[thick,fill] (1,-1) -- (0.5,-0.75) circle (2pt);
\end{tikzpicture}&
&\begin{tikzpicture}[baseline=-.5ex,xscale=-1]
\draw[thick] (0,0) circle (0.5);
\draw[fill,thick] (-1,-1) circle (2pt) -- (1,-1) circle (2pt);
\draw[thick] (-1,-1) to[out=90,in=135] (135:0.5);
\draw[fill] (135:0.5) circle (2pt);
\draw[thick] (1,-1) to[out=90,in=0] (0.5,0.45) -- (0,0.5);
\draw[thick] (-1,-1) -- (0,-0.5);
\draw[thick] (1,-1) -- (0,-0.5);
\draw[fill] (0,0.5) circle (2pt);
\draw[fill] (0,-0.5) circle (2pt);
\end{tikzpicture}
\end{align*}
\caption{A path in red connecting the two components of the complement of the star of $\{y_1,x_2\}$ in $\Omega$, together with the resulting shortening of $\gamma_1\cap S$}
\label{fig:induction step}
\end{figure}
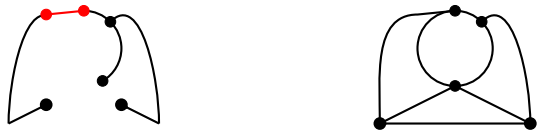

\begin{proof}[Proof of Proposition \ref{prop:angel}]
By Lemma \ref{lem:essential vertices}, we may assume that each component of $S\setminus\{y_1,x_2\}$ contains an essential vertex. Thus, by (\ref{connected}), there is a path connecting these components and avoiding $e$, $y_1$, and $x_2$. By (\ref{P}), we may assume that this path does not intersect both $\gamma_1$ and $\gamma_2$ away from $S$. There are now six possibilities, as depicted in Figure \ref{fig:splitting or reducing}. We conclude that, if $\graf$ is not split, then $\graf$ is equivalent to an angelic graph in which the edge length either of $\gamma_1\cap S$ or of $\gamma_2\cap S$ is strictly smaller, as in the proof of Lemma \ref{lem:essential vertices}. An induction on the sum of these edge lengths completes the proof.
\end{proof}

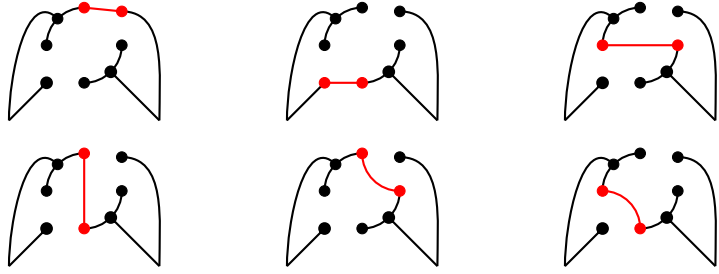
\begin{figure}[ht]
\begin{align*}
&\begin{tikzpicture}[baseline=-.5ex]
\draw[thick] (0,0.5) arc (90:180:0.5);
\draw[thick] (0:0.5) arc (0:-90:0.5);
\draw[thick] (-1,-1) to[out=90,in=135] (135:0.5);
\draw[fill] (135:0.5) circle (2pt);
\draw[thick] (1,-1) to[out=90,in=0] (0.5,0.45);
\draw[thick,red] (0.5,0.45) -- (0,0.5);
\draw[fill,red] (0.5,0.45) circle (2pt);
\draw[fill,thick] (-1,-1) -- (-0.5,-0.5) circle (2pt);
\draw[fill,thick] (1,-1) -- (-45:0.5) circle (2pt);
\draw[fill,red] (0,0.5) circle (2pt);
\draw[fill] (180:0.5) circle (2pt);
\draw[fill] (0.5,0) circle (2pt) (270:0.5) circle (2pt);
\end{tikzpicture}&&
\begin{tikzpicture}[baseline=-.5ex]
\draw[thick] (0,0.5) arc (90:180:0.5);
\draw[thick] (0:0.5) arc (0:-90:0.5);
\draw[thick] (-1,-1) to[out=90,in=135] (135:0.5);
\draw[thick] (1,-1) to[out=90,in=0] (0.5,0.45);
\draw[thick,red] (-0.5,-0.5) -- (0,-0.5);
\draw[fill] (0,0.5) circle (2pt) (180:0.5) circle (2pt);
\draw[fill] (0.5,0) circle (2pt);
\draw[fill,red] (270:0.5) circle (2pt);
\draw[fill] (135:0.5) circle (2pt);
\draw[fill] (0.5,0.45) circle (2pt);
\draw[fill,thick] (1,-1) -- (-45:0.5) circle (2pt);
\draw[fill,thick] (-1,-1) -- (-0.5,-0.5);
\draw[fill,red] (-.5,-.5) circle (2pt);
\end{tikzpicture}&&
\begin{tikzpicture}[baseline=-.5ex]
\draw[thick] (0,0.5) arc (90:180:0.5);
\draw[thick] (0:0.5) arc (0:-90:0.5);
\draw[thick,red] (-0.5,0) -- (0.5,0);
\draw[fill] (0,0.5) circle (2pt);
\draw[fill,red] (180:0.5) circle (2pt);
\draw[fill,red] (0.5,0) circle (2pt);
\draw[fill] (270:0.5) circle (2pt);
\draw[thick] (-1,-1) to[out=90,in=135] (135:0.5);
\draw[fill] (135:0.5) circle (2pt);
\draw[thick] (1,-1) to[out=90,in=0] (0.5,0.45);
\draw[fill] (0.5,0.45) circle (2pt);
\draw[fill,thick] (-1,-1) -- (-0.5,-0.5) circle (2pt);
\draw[fill,thick] (1,-1) -- (-45:0.5) circle (2pt);
\end{tikzpicture}\\
&\begin{tikzpicture}[baseline=-.5ex]
\draw[thick] (0,0.5) arc (90:180:0.5);
\draw[thick] (0:0.5) arc (0:-90:0.5);
\draw[thick,red] (0,0.5) -- (0,-0.5);
\draw[fill,red] (0,0.5) circle (2pt);
\draw[fill] (180:0.5) circle (2pt);
\draw[fill] (0.5,0) circle (2pt);
\draw[fill,red] (270:0.5) circle (2pt);
\draw[thick] (-1,-1) to[out=90,in=135] (135:0.5);
\draw[fill] (135:0.5) circle (2pt);
\draw[thick] (1,-1) to[out=90,in=0] (0.5,0.45);
\draw[fill] (0.5,0.45) circle (2pt);
\draw[fill,thick] (-1,-1) -- (-0.5,-0.5) circle (2pt);
\draw[fill,thick] (1,-1) -- (-45:0.5) circle (2pt);
\end{tikzpicture}&
&\begin{tikzpicture}[baseline=-.5ex]
\draw[thick] (0,0.5) arc (90:180:0.5);
\draw[thick] (0:0.5) arc (0:-90:0.5);
\draw[thick,red] (0, 0.5) arc (180:270:0.5);
\draw[fill,red] (0,0.5) circle (2pt);
\draw[fill] (180:0.5) circle (2pt);
\draw[fill,red] (0.5,0) circle (2pt);
\draw[fill] (270:0.5) circle (2pt);
\draw[thick] (-1,-1) to[out=90,in=135] (135:0.5);
\draw[fill] (135:0.5) circle (2pt);
\draw[thick] (1,-1) to[out=90,in=0] (0.5,0.45);
\draw[fill] (0.5,0.45) circle (2pt);
\draw[fill,thick] (-1,-1) -- (-0.5,-0.5) circle (2pt);
\draw[fill,thick] (1,-1) -- (-45:0.5) circle (2pt);
\end{tikzpicture}&&
\begin{tikzpicture}[baseline=-.5ex]
\draw[thick] (0,0.5) arc (90:180:0.5);
\draw[thick] (0:0.5) arc (0:-90:0.5);
\draw[thick,red] (0, -0.5) arc (0:90:0.5);
\draw[fill] (0,0.5) circle (2pt);
\draw[fill,red] (180:0.5) circle (2pt);
\draw[fill] (0.5,0) circle (2pt);
\draw[fill,red] (270:0.5) circle (2pt);
\draw[thick] (-1,-1) to[out=90,in=135] (135:0.5);
\draw[fill] (135:0.5) circle (2pt);
\draw[thick] (1,-1) to[out=90,in=0] (0.5,0.45);
\draw[fill] (0.5,0.45) circle (2pt);
\draw[fill,thick] (-1,-1) -- (-0.5,-0.5) circle (2pt);
\draw[fill,thick] (1,-1) -- (-45:0.5) circle (2pt);
\end{tikzpicture}
\end{align*}
\caption{Splitting or reducing arc length}
\label{fig:splitting or reducing}
\end{figure}

\section{Induction step}\label{section:induction}

In this section, we complete the proof of Theorem \ref{thm:reformulation}. Maintaining Notation \ref{notation:setup}, we assume that $\graf_e$ is biconnected and proceed by induction on $N_2(\graf_e)$, the base case being Proposition \ref{prop:base case}.

\subsection{Setup and reductions} We assume throughout this section and the next that $\graf_e$ has a $2$-cut $\{x,y\}$ with $\{x,y\}$-components $\{\graphfont{\Delta}_{i}\}_{i=1}^m$ such that $N_2(\overline{\graphfont{\Delta}}_{i})<N_2(\graf_e)$ for each $i$. Given a loop cycle $b$ in $\graf_w$ (whose support necessarily lies in $\graf_e$), we say that $b$ is \emph{local} if its support lies entirely in a single $\{x,y\}$-component; otherwise, we say that $b$ is \emph{global} (see Figure \ref{figure:simple necklace}). Note that the support of a global loop cycle is necessarily contained in the union of exactly two $\{x,y\}$-components.

\begin{figure}[ht]
\begin{align*}
&\begin{tikzpicture}[baseline=-0.5ex]
\fill(-2,1) circle (2pt) node[above] {$x$} (-2,-1) circle (2pt) node[below] {$y$};
\draw(-2,1)--(-3.2,0.5) (-2,1)--(-3,0.5) (-2,1)--(-2.8,0.5);
\draw(-2,-1)--(-3.2,-0.5) (-2,-1)--(-2.8,-0.5);
\draw(-2,1)--(-1.2,0.5) (-2,1)--(-0.8,0.5);
\draw(-1.5,-0.5) rectangle (-0.5,0.5) (-1,0) node {$\graphfont{\Delta}_2$};
\draw(-2,-1)--(-1.2,-0.5) (-2,-1)--(-1,-0.5) (-2,-1)--(-0.8,-0.5);
\draw (-3.5,-0.5) rectangle (-2.5,0.5) (-3,0) node {$\graphfont{\Delta}_1$};
\draw[red] (-3,0) circle (0.4) (-2.5,0) node[right] {$b$};
\end{tikzpicture}&
&\begin{tikzpicture}[baseline=-0.5ex]
\fill(-2,1) circle (2pt) node[above] {$x$} (-2,-1) circle (2pt) node[below] {$y$};
\draw(-2,1)--(-3.2,0.5) (-2,1)--(-3,0.5) (-2,1)--(-2.8,0.5);
\draw(-2,-1)--(-3.2,-0.5) (-2,-1)--(-2.8,-0.5);
\draw(-2,1)--(-1.2,0.5) (-2,1)--(-0.8,0.5);
\draw(-1.5,-0.5) rectangle (-0.5,0.5) (-1,0) node {$\graphfont{\Delta}_2$};
\draw(-2,-1)--(-1.2,-0.5) (-2,-1)--(-1,-0.5) (-2,-1)--(-0.8,-0.5);
\draw[red] (-2,1)--node[midway,above left] {$b'$} (-3.2,0.5)--(-3.2,-0.5)--(-2,-1)--(-0.8,-0.5)--(-0.8,0.5)--cycle;
\draw (-3.5,-0.5) rectangle (-2.5,0.5) (-3,0) node {$\graphfont{\Delta}_1$};
\end{tikzpicture}
\end{align*}
\caption{A local loop cycle $b$ and a global loop cycle $b'$}
\label{figure:simple necklace}
\end{figure}
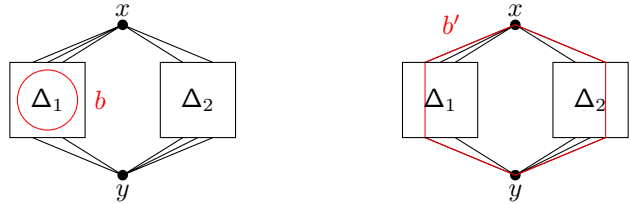

We begin by dealing with the local case.

\begin{lemma}\label{lem:local case}
Suppose that $\graf$ is planar. If $b$ is a local loop cycle in $\graf_w$ with support disjoint from $\{v,v'\}$, then $[b]\in \psi(M(\graf))$.
\end{lemma}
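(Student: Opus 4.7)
The proof proceeds by induction on $N_2(\graf_e)$, with base case $N_2(\graf_e)=1$ handled by Proposition~\ref{prop:base case}. Let $\graphfont{\Delta}_1$ denote the $\{x,y\}$-component of $\graf_e$ containing the support of $b$. The strategy is to collapse the remaining components $\graphfont{\Delta}_j$ ($j\neq 1$) by surgery, producing a graph $\graf'$ with $\graf'_e$ of strictly smaller parameter, so that the induction hypothesis applies to $\graf'$.

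In the principal case $v,v'\in\graphfont{\Delta}_1$, perform the surgery $\sigma:\graf\dashrightarrow\graf'$ along $\bigcup_{j\neq 1}\graphfont{\Delta}_j$; this is well-defined (since $v,v'$ are untouched) and yields $\graf'_e=\overline{\graphfont{\Delta}}_1$, with $N_2(\graf'_e)<N_2(\graf_e)$ by the defining property of the $2$-cut decomposition. Since $\sigma$ acts as the identity on $\graphfont{\Delta}_1$, the chain $b$ persists as a loop cycle $b'$ in $\graf'_w$ with $\sigma_*[b]=[b']$. Applying the path argument (Lemma~\ref{lem:path argument}) in $\graf'$---routing a path from $v$ to $v'$ via the fresh edge $e_{xy}\subset\graf'_e$ in order to avoid $\mathrm{supp}\,b$---shows $[b']\in\psi'(M(\graf'))$, and in particular $[b']\in\ker(\delta')$. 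By the induction hypothesis, Theorem~\ref{thm:reformulation} holds for $\graf'$, so $[b']=\psi'(\mu')$ for some $\mu'\in M(\graf')$. For any section $\iota:\graf'\to\graf$ of $\sigma$, naturality of $\psi$ yields $\psi(\iota_*\mu')=\iota_*[b']=[b]$, and $\iota_*\mu'\in M(\graf)$ since topological embeddings preserve toric and theta classes.

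When $v,v'\notin\graphfont{\Delta}_1$ (both outside), the path argument applies directly, since a path from $v$ to $v'$ may be routed entirely through $\bigcup_{j\neq 1}\graphfont{\Delta}_j$, avoiding $\mathrm{supp}\,b$. The mixed configuration---exactly one of $v,v'$ inside $\graphfont{\Delta}_1$---is handled by a partial surgery collapsing only those $\graphfont{\Delta}_j$ containing neither $v$ nor $v'$, followed by an application of Proposition~\ref{prop:basic surgery} to reduce $[b]$ to $\iota_*\sigma_*[b]$ plus a sum of stabilized star classes whose supports admit the path argument separately.

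The main obstacle is the verification of the path argument in the principal case: one must check that, after removing $\mathrm{supp}\,b$ from $\graphfont{\Delta}_1$, the vertices $v,v',x,y$ remain in a single connected component of $\graphfont{\Delta}_1$, so that the bypass $v\to x\to e_{xy}\to y\to v'$ furnishes a valid route in $\graf'_e$ disjoint from $\mathrm{supp}\,b$. This is where planarity of $\graf$ enters the argument, constraining the local embedded structure of the cycle $\mathrm{supp}\,b$ within the planar subgraph $\graphfont{\Delta}_1$ along the lines of the analysis of Section~\ref{section:angelic graphs}.
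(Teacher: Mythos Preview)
Your proposal has two genuine gaps, one of which you acknowledge without resolving.

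\textbf{The ``neither'' case is incorrect as stated.} You claim that when $v,v'\notin\graphfont{\Delta}_1$, a path from $w$ to $w'$ may be routed entirely through $\bigcup_{j\neq 1}\graphfont{\Delta}_j$, avoiding $\mathrm{supp}\,b$. But every $\{x,y\}$-component contains $x$ and $y$, so $\bigcup_{j\neq 1}\graphfont{\Delta}_j$ and $\mathrm{supp}\,b\subseteq\graphfont{\Delta}_1$ may intersect at $\{x,y\}$; if $v\in\graphfont{\Delta}_2$ and $v'\in\graphfont{\Delta}_3$, any such path must traverse $x$ or $y$, and nothing prevents $\{x,y\}\subseteq\mathrm{supp}\,b$. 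The paper handles this by first reducing to the case $\{x,y\}\subseteq\mathrm{supp}\,b$ via the path argument, and then using connectedness of $\graphfont{\Delta}_1\setminus\{x,y\}$ to find a chord splitting $b=b_1+b_2$ into two loop cycles whose supports each omit one of $x,y$; the path argument then applies to each summand.

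\textbf{The principal case has the right surgery but the wrong follow-up.} After collapsing $\bigcup_{j\neq 1}\graphfont{\Delta}_j$, you attempt to reach $\psi'(M(\graf'))$ via the path argument in $\graf'$, which requires $v$ and $v'$ to be joined in $\overline{\graphfont{\Delta}}_1\setminus\mathrm{supp}\,b$; you correctly identify this as the main obstacle but do not resolve it, and the appeal to planarity and angelic graphs is not a proof (indeed, nothing rules out $\mathrm{supp}\,b$ separating $v$ from $\{v',x,y\}$ in the plane). The paper sidesteps this entirely by inducting not on Theorem~\ref{thm:reformulation} but on the stronger statement proved in Lemma~\ref{lem:triconnected case} for the base case: \emph{any} loop cycle with support disjoint from $\{v,v'\}$ lies in $\psi(M)$. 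After surgery, $b$ is such a loop cycle in $\graf'_w$ (its support is disjoint from both $v$ and the new $v'$, the latter lying on $e_{xy}$), so the inductive hypothesis applies directly---no path needed. The paper's auxiliary graph $\Delta$ also absorbs your ``mixed'' case: when $v\in\graphfont{\Delta}_1$ but $v'\notin\graphfont{\Delta}_1$, one places the new $v'$ on $e_{xy}$ and embeds $\Delta$ back into $\graf$ by sending $e_{xy}$ to a path through the original $v'$ and the new edge to the original $e$.
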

\begin{proof}
Without loss of generality, the support of $b$ lies in $\graphfont{\Delta}_1$. By inspection, we have $[b]=\iota_*\sigma_*([b])$, where $\sigma$ is the surgery along $\graf'=\bigcup_{i=2}^m\graphfont{\Delta}_i$ using the vertices $\{x,y\}$, and $\iota$ is any section.

If one of $v$ or $v'$, say $v$, is contained in $\Delta_1$, then we may regard the completion $\overline{\Delta}_1$ as $\Delta_e$, where $\Delta$ is the graph obtained by adding an edge between $v$ and a bivalent vertex $v'$ added to the edge $e_{xy}$. The claim now follows from Example \ref{example:completion surgery} and Proposition \ref{prop:base case} by induction on $N_2(\graf_e)$. Note that we use the assumption that $b$ is local to ensure that the loop cycle resulting from surgery again has support disjoint from $\{v,v'\}$, permitting the induction.

If neither $v$ nor $v'$ is contained in $\Delta_1$, then we may assume by the path argument that $v\in \Delta_2$ and $v'\in \Delta_3$. For the same reason, writing $S$ for the support of $b$, we may further assume that $\{x,y\}\subseteq S$. Since $\Delta_1\setminus\{x,y\}$ is connected, the two components of $S\setminus\{x,y\}$ are joined by a path in $\Delta_1$. Using this path, we may write $b=b_1+b_2$, where each $b_i$ is a loop cycle with support avoiding $\{v,v'\}$ and not containing $\{x,y\}$, hence yielding to the path argument.
\end{proof}

A similar tactic dispenses with global loop cycles in certain situations.

\begin{lemma}\label{lem:partial global case}
Suppose that $\graf$ is planar. If $b$ is a global loop cycle in $\graf_w$ with support disjoint from $\{v,v'\}$, and if $v$ and $v'$ lie in a common $\{x,y\}$-component, then $[b]\in \psi(M(\graf))$.
\end{lemma}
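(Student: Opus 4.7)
The plan is to apply the path argument (Lemma \ref{lem:path argument}) after exhibiting a path from $w$ to $w'$ in $\graf_w$ disjoint from the support of $b$. Let $\graphfont{\Delta}_3$ denote the common $\{x,y\}$-component containing both $v$ and $v'$; by relabeling, assume the support of $b$ lies in $\graphfont{\Delta}_1\cup\graphfont{\Delta}_2$ with $3\notin\{1,2\}$.

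The first step is to verify that $v, v' \notin \{x,y\}$. Since $b$ is global, the underlying simple closed curve must cross the $2$-cut $\{x,y\}$ at least twice in order to visit both $\graphfont{\Delta}_1$ and $\graphfont{\Delta}_2$, and since a simple cycle visits no vertex twice, both $x$ and $y$ must lie in the support of $b$. The hypothesis that the support of $b$ is disjoint from $\{v,v'\}$ then forces $v, v' \notin \{x,y\}$.

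Next, I would exhibit the path. By definition, $\graphfont{\Delta}_3 \setminus \{x,y\}$ is a connected component of $\graf_e \setminus \{x,y\}$, and it contains $v$ and $v'$ by the previous step. Pick any path $\gamma$ from $v$ to $v'$ lying in $\graphfont{\Delta}_3 \setminus \{x,y\}$, and extend it to a path $\widetilde{\gamma}$ from $w$ to $w'$ in $\graf_w$ by prepending the edge from $w$ to $v$ and appending the edge from $v'$ to $w'$.

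Since the support of $b$ is contained in $\graphfont{\Delta}_1 \cup \graphfont{\Delta}_2$, while distinct $\{x,y\}$-components meet only at $\{x,y\}$, the path $\widetilde{\gamma}$ is disjoint from the support of $b$, and Lemma \ref{lem:path argument} delivers the conclusion. No substantive obstacle is anticipated; the argument is essentially a direct application of the path argument, with the only subtlety being the observation that the globalness of $b$, combined with support-avoidance of $\{v,v'\}$, forces $\{v,v'\}$ to be disjoint from $\{x,y\}$. The planarity hypothesis, inherited from the surrounding context, does not appear to play a role.
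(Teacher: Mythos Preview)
Your argument covers only part of the statement. You tacitly assume that the $\{x,y\}$-component containing $v$ and $v'$ is \emph{distinct} from both components $\graphfont{\Delta}_1$ and $\graphfont{\Delta}_2$ carrying the support of $b$; your relabeling ``$3\notin\{1,2\}$'' is precisely this assumption, and nothing in the hypotheses justifies it. It is entirely possible that $v$ and $v'$ both lie in $\graphfont{\Delta}_1$ (say), away from $\{x,y\}$ and away from the particular path that $b$ traces through $\graphfont{\Delta}_1$. In that situation your path $\widetilde\gamma$ is forced to live in $\graphfont{\Delta}_1$ as well, and there is no reason it should avoid the support of $b$; the path argument does not apply.

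The paper treats exactly this missing case separately. When the support of $b$ does meet the component of $v$ and $v'$, one performs surgery along the union of the \emph{other} $\{x,y\}$-components (as in Lemma~\ref{lem:local case}), landing in $\overline{\graphfont{\Delta}_1}$ with $N_2(\overline{\graphfont{\Delta}_1})<N_2(\graf_e)$. The point is that the image loop cycle still has support disjoint from $\{v,v'\}$, because $v$ and $v'$ sit inside $\graphfont{\Delta}_1\setminus\{x,y\}$ while the surgery only introduces the new edge $e_{xy}$ at $\{x,y\}$; the induction on $N_2$ then finishes. (This, incidentally, is where planarity is used, via the inductive invocation of the base case.) Your argument for the complementary case---when $b$ misses the component of $v,v'$---is fine and matches the paper's use of the path argument there.
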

\begin{proof}
We begin by observing that $\{x,y\}\cap \{v,v'\}=\varnothing$ by our assumptions on $b$, so $v$ and $v'$ in fact lie in the same component of $\graf\setminus\{x,y\}$. If $b$ does not intersect this component, then the claim follows from the path argument; otherwise, the inductive surgery argument of Lemma \ref{lem:local case} applies. Here, we instead use the fact that $v$ and $v'$ lie in the same component of $\graf\setminus\{x,y\}$ to ensure that the loop cycle resulting from surgery again has support disjoint from $\{v,v'\}$.
\end{proof}

\subsection{Conclusion} We begin with a simple observation concerning star classes, which is a consequence of \cite[Lemma C.14]{AnDrummondColeKnudsen:SSGBG}.

\begin{lemma}\label{lem:independent stars}
Let $\{x,y\}$ be a $2$-cut in $\graphfont{\Delta}$ with $\{x,y\}$-components $\{\graphfont{\Delta}_i\}_{i=1}^m$ and $a$ and $a'$ star cycles in $\graphfont{\Delta}$. Write $i_j$ for the index such that the $j$th half-edge involved in $a$ lies in $\graphfont{\Delta}_{i_j}$ (resp. $i_j'$, $a'$). If $\{i_j\}_{j=1}^3\neq \{i_j'\}_{j=1}^3$, then $[a_1]$ and $[a_2]$ are independent in the sense that \[m[a_1]+n[a_2]=0\implies m[a_1]=0.\]
\end{lemma}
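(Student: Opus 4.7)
The plan is to invoke \cite[Lemma C.14]{AnDrummondColeKnudsen:SSGBG}, which furnishes a direct-sum-type description of the subgroup of $H_1(B(\graphfont{\Delta}))$ generated by star classes at the $2$-cut $\{x,y\}$, with contributions organized by the $\{x,y\}$-components $\graphfont{\Delta}_i$.

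The first observation is that the hypothesis $\{i_j\}\neq\{i'_j\}$ forces at least one of $a_1,a_2$ to be supported at $x$ or $y$: a star cycle supported strictly inside some $\graphfont{\Delta}_i$ has all three half-edges in $\graphfont{\Delta}_i$, so its triple of component indices is the singleton $\{i\}$, and two such singletons coming from different support vertices in the same component would necessarily coincide. Next I would choose $i^*\in\{i_j\}\triangle\{i'_j\}$, and, up to swapping the roles of $a_1$ and $a_2$, assume $i^*\in\{i_j\}\setminus\{i'_j\}$. To keep the $\graphfont{\Delta}_{i^*}$-side visible while simplifying the remainder, I would apply the iterated surgery $\sigma\colon\graphfont{\Delta}\dashrightarrow\graphfont{\Delta}'$ that replaces each $\graphfont{\Delta}_i$ with $i\neq i^*$ by a single edge $e_{xy,i}$ joining $x$ and $y$. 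Inspection of the chain-level formula for $\intrinsic{\sigma}$ recalled after Proposition \ref{prop:surgery is a thing} shows that $\sigma_*(a_2)$ is a star class supported entirely in the bouquet of new edges, while $\sigma_*(a_1)$ retains at least one half-edge in $\graphfont{\Delta}_{i^*}$.

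In this simplified graph, \cite[Lemma C.14]{AnDrummondColeKnudsen:SSGBG} applies to express the relevant piece of $H_1(B(\graphfont{\Delta}'))$ as a direct sum whose $\graphfont{\Delta}_{i^*}$-summand contains $[\sigma_*(a_1)]$ but annihilates $[\sigma_*(a_2)]$. Any relation $m[a_1]+n[a_2]=0$ pushes forward by Proposition \ref{prop:surgery is a thing} to $m[\sigma_*(a_1)]+n[\sigma_*(a_2)]=0$, and projecting onto the $\graphfont{\Delta}_{i^*}$-summand yields $m\cdot\pi_{i^*}([\sigma_*(a_1)])=0$. Choosing a section $\iota\colon\graphfont{\Delta}'\to\graphfont{\Delta}$ of $\sigma$ realizes $[a_1]$ as $\iota_*[\sigma_*(a_1)]$ modulo classes supported in the surgered-away components, to which a further (easier) application of Lemma C.14 applies to show that no nontrivial mixing relation between them and $[a_1]$ exists; this promotes $m\cdot\pi_{i^*}([\sigma_*(a_1)])=0$ to $m[a_1]=0$.

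The main obstacle is the descent step from $\graphfont{\Delta}'$ back to $\graphfont{\Delta}$: the retraction identity $\sigma_*\iota_*=\mathrm{id}$ only recovers classes in the image of $\iota_*$, so one must verify that the $\graphfont{\Delta}_{i^*}$-projection genuinely faithfully records the order of $[a_1]$ rather than of some quotient. Concretely, this amounts to reading from the statement of \cite[Lemma C.14]{AnDrummondColeKnudsen:SSGBG} that the relevant direct-sum decomposition is compatible with the surgery projections, which is a careful but routine check once the notational translation between that lemma's setup and ours is carried out.
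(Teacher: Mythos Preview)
The paper's proof is a one-line citation of \cite[Lemma C.14]{AnDrummondColeKnudsen:SSGBG}: that lemma already furnishes, directly in $\graphfont{\Delta}$, a direct-sum decomposition of the relevant star classes indexed by the component pattern $\{i_j\}$, so that classes with distinct patterns lie in distinct summands and are independent for free. Your proposal invokes the same lemma but routes through an auxiliary surgery $\sigma$ and then tries to descend back to $\graphfont{\Delta}$; this detour is unnecessary and, as written, has a genuine gap.

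The gap is that your surgery can annihilate $\sigma_*(a_1)$. Take $a_1$ supported at $x$ with half-edges $h_1,h_2\in\graphfont{\Delta}_1$ and $h_3\in\graphfont{\Delta}_2$, so $\{i_j\}=\{1,2\}$, and take $a_2$ supported at $x$ with $\{i'_j\}=\{1,3\}$ (multiset $\{1,1,3\}$, say). Then $\{i_j\}\setminus\{i'_j\}=\{2\}$ forces $i^*=2$, and your surgery collapses $\graphfont{\Delta}_1$ to a single edge $e_{xy,1}$. By the chain-level formula you cite, both $h_1$ and $h_2$ map to the same half-edge of $e_{xy,1}$ at $x$, and both $e_1,e_2$ map to $e_{xy,1}$; a direct check gives $\sigma_*(a_1)=0$. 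Your projection step then reads $m\cdot 0=0$, which says nothing about $m[a_1]$, and there is nothing left in $\graphfont{\Delta}'$ to detect its order---so the descent obstacle you flag cannot be overcome here. Swapping roles does not help: with $i^*=3$ one kills $\sigma_*(a_2)$ by the same mechanism. The fix is simply to drop the surgery and apply \cite[Lemma C.14]{AnDrummondColeKnudsen:SSGBG} to $\graphfont{\Delta}$ itself, as the paper does.

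(A minor side remark: your first observation is false as stated. If $a_1$ is supported at an interior vertex of $\graphfont{\Delta}_1$ and $a_2$ at an interior vertex of $\graphfont{\Delta}_2$, then $\{i_j\}=\{1\}\neq\{2\}=\{i'_j\}$ while neither is supported at $x$ or $y$. Your subsequent argument does not actually use this observation, so the error is harmless.)
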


We now consider the following simple case.

\begin{lemma}\label{lem:not pesky}
Suppose that $\graf$ is planar and the $2$-cut $\{x,y\}$ is disjoint from and separates $v$ and $v'$. Let $b$ be a global loop cycle, $e_0$ an edge lying in the support of $b$, and $m$ and $n$ non-negative integers. If the cycle $ne_0^mb$ satisfies {\bf K}, then $n=0$.
\end{lemma}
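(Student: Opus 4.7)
The plan is to reduce via planarity to a non-vanishing statement, and then to exhibit the non-vanishing through a $Q$-relation argument combined with Lemma~\ref{lem:independent stars}. Planarity of $\graf_w$ (inherited from $\graf$) together with the Ko--Park theorem \cite[Cor.~3.6]{KoPark:CGBG} implies that $H_1(B(\graf_w))$ is torsion-free. Hence under the hypothesis $n(e-e')e_0^m[b]=0$, the conclusion $n=0$ follows once we establish $(e-e')e_0^m[b]\neq 0$ in $H_1(B_{m+2}(\graf_w))$.

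To establish this non-vanishing, I would choose a path $\gamma$ in $\graf_w$ from $w$ to $w'$ passing through the cut vertex $x$; since $\{x,y\}$ separates $v$ from $v'$ and since $b$ is global (so $\supp(b)\supseteq\{x,y\}$), the intersection $\gamma\cap\supp(b)$ contains $x$. Writing $\gamma$ as a concatenation of consecutive edges $(e=e'_0,e'_1,\ldots,e'_N=e')$ meeting at vertices $u_\ell$, the identity $e-e'=\sum_\ell(e'_\ell-e'_{\ell+1})$ permits term-by-term analysis of $(e-e')[b]$. For $u_\ell\notin\supp(b)$, the element $(h'_\ell-h'_{\ell+1})_{u_\ell}\otimes b\in\fullyreduced{\graf_w}$ is well-defined and has boundary $(e'_\ell-e'_{\ell+1})b$, so the corresponding term vanishes in $H_1$. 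For $u_\ell\in\supp(b)$, functoriality of the $Q$-relation (Lemma~\ref{lem:Q}), applied to the lollipop subgraph formed by $b$ and an auxiliary edge at $u_\ell$, produces a star class $\alpha^{(u_\ell)}$ at $u_\ell$. At $u_\ell=x$, the resulting star involves half-edges in the $\{x,y\}$-components containing $\gamma$'s entering and exiting edges together with $b$'s two crossing half-edges; in generic configurations these span three distinct components, yielding a rigid star in the sense of Lemma~\ref{lem:independent stars}.

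Invoking Lemma~\ref{lem:independent stars}, the rigid star at $x$ is linearly independent of star classes at other vertices of $\supp(b)$, whose half-edges lie in a single $\{x,y\}$-component and so carry different component labels. Multiplying the telescoped identity by $ne_0^m$ and using the vanishing hypothesis then forces $n$ times the rigid star class at $x$ to vanish in $H_1$, giving $n=0$ by torsion-freeness together with non-vanishing of rigid star classes for planar graphs.

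The main obstacle is the degenerate case in which $b$'s two supporting components coincide with $\graphfont{\Delta}_1$ and $\graphfont{\Delta}_2$ (the components containing $v$ and $v'$) and $m=2$, so that no third $\{x,y\}$-component is available at $x$ to witness rigidity. In this case I would fall back on a direct chain-level ``orphan'' argument in $\fullyreduced{\graf_w}$: when $v\in\supp(b)$, the monomial $e\cdot e_0^m$ times the half-edge-difference factor of $b$ at $v$ appears in $(e-e')e_0^m b$, and cannot appear in the boundary of any 2-chain, since the only way an $e$-factor can enter the differential is through $\partial$ applied to a half-edge difference at $v$, which must occupy a tensor slot distinct from that of the $b$-factor at $v$; when $v\notin\supp(b)$, a preliminary surgery along $\graphfont{\Delta}_1$ reduces to a smaller instance, or a symmetric orphan argument at $v'$ applies. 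Verifying that these auxiliary arguments suffice in every subcase, and controlling the possible cancellations in the telescoped sum, is the most delicate part of the proof.
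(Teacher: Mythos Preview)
Your approach shares the paper's core ingredients---the $Q$-relation, Lemma~\ref{lem:independent stars}, and torsion-freeness from planarity---but the execution has real gaps that the paper avoids through a cleaner decomposition.

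The key difference: rather than telescoping $e-e'$ along a full path from $w$ to $w'$ through $x$, the paper simply writes $(e-e')[b]=(e-e_0)[b]-(e'-e_0)[b]$ with $e_0$ in the support of $b$, and computes each piece separately. Connecting $v$ to $\supp(b)$ by a path and applying the $Q$-relation at the entry point yields a \emph{single} star class $[a]$; likewise $v'$ yields a single $[a']$. The component-label multisets of $a$ and $a'$ are then forced to differ, because $v$ and $v'$ lie in different $\{x,y\}$-components: if $v\in\graphfont{\Delta}_i$ for $i\in\{1,2\}$ then at least two half-edges of $a$ lie in $\graphfont{\Delta}_i$, while if $v$ lies in some other $\graphfont{\Delta}_j$ the labels of $a$ are $\{1,2,j\}$, and symmetrically for $a'$. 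Lemma~\ref{lem:independent stars} then gives $n[a]=0$, and torsion-freeness finishes. Injectivity of multiplication by $e_0$ (\cite[Prop.~5.21]{AnDrummondColeKnudsen:SSGBG}) disposes of the $e_0^m$ factor at the outset, so no separate non-vanishing statement for $e_0^m(e-e')[b]$ is needed.

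Your telescope, by contrast, produces many stars, and your assertion that the star at $x$ spans ``three distinct components'' is generally false. When $v\in\graphfont{\Delta}_1$ and $v'\in\graphfont{\Delta}_2$---the case you flag as ``degenerate'' but which is in fact the generic one---the entering edge of $\gamma$ at $x$ lies in $\graphfont{\Delta}_1$ and the two loop half-edges at $x$ lie in $\graphfont{\Delta}_1$ and $\graphfont{\Delta}_2$, giving only two distinct labels. Moreover, a star class has three half-edges, not the four you list, so the contribution at $x$ is either zero, a single star, or a difference of two stars depending on which of $e'_\ell,e'_{\ell+1}$ coincide with loop edges; this case analysis is absent. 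If $\gamma$ passes through both $x$ and $y$, the stars arising there may carry the same label multiset, so Lemma~\ref{lem:independent stars} does not separate them. Finally, the chain-level ``orphan'' argument you sketch is not a proof and is in any event unnecessary: the paper's two-star decomposition handles all cases, including your degenerate one, uniformly.
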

\begin{proof}
Suppose that $ne_0^mb$ satisfies {\bf K}. Without loss of generality, the support of $b$ lies in $\graphfont{\Delta}_1\cup \graphfont{\Delta}_2$. By {\bf K} and the Q-relation, we have \[0=ne_0^m(e-e')[b]=\pm ne_0^m([a]-[a']),\] where $a$ is a star cycle satisfying the following conditions with respect to $v$ (resp. $a'$, $v'$):
\begin{enumerate}
\item if $v$ lies in $\graphfont{\Delta}_i$ for some $i\in \{1,2\}$, then $a$ involves at least two half-edges lying in $\graphfont{\Delta}_i$ (resp. $v'$, $a'$);
\item if $v$ does not lie in $\graphfont{\Delta}_i$ for any $i\in \{1,2\}$, then $a$ involves a single half-edge in each of $\graphfont{\Delta}_1$, $\graphfont{\Delta}_2$, and the $\{x,y\}$-component of $v$.
\end{enumerate}
These conditions guarantee that the hypothesis of Lemma \ref{lem:independent stars} obtains. Since multiplication by $e_0$ is injective by \cite[Prop. 5.21]{AnDrummondColeKnudsen:SSGBG}, it follows that $n$ annihilates $[a]-[a']$ and hence each class separately. We conclude that $n=0$ by planarity and \cite[Cor. 3.6]{KoPark:CGBG}.
\end{proof}

Note that Lemma \ref{lem:not pesky} does not require the support of $b$ to be disjoint from $\{v,v'\}$.

\begin{proof}[Proof of Theorem \ref{thm:reformulation}]
By Lemma \ref{lem:reduce to biconnected} and Proposition \ref{prop:base case}, it suffices to establish the conclusion under the further assumption that $\graf_e$ is biconnected with a $2$-cut $\{x,y\}$ such that $N_2(\overline{\graphfont{\Delta}}_{i})<N_2(\graf_e)$ for each $i$. Consider the pesky cycle $c=\sum_{j=1}^s q_jb_j$, where each $b_j$ is a loop cycle. By biconnectivity and Proposition \ref{prop:cokernel generators}, it suffices to show that $[c]\in \psi(M(\graf))$. By Lemmas \ref{lem:local case} and \ref{lem:partial global case}, we may further assume that each $b_j$ is global and that $\{x,y\}$ separates $v$ and $v'$. 

Unless $c=0$, since $b_1$ is global satisfying {\bf S}, its support intersects $\graphfont{\Delta}_1$ (up to symmetry) in a path $\gamma$ from $x$ to $y$ avoiding $v$ and $v'$. We call a loop cycle \emph{special} if the intersection of its support with $\graphfont{\Delta}_1$ is also $\gamma$. The support of an arbitrary global loop cycle $b$ intersects $\graphfont{\Delta}_1$ either in a path from $x$ to $y$ or in the set $\{x,y\}$. A cycle of the former type differs from a special loop cycle by a sum of local loop cycles, while a cycle of the latter type is the difference of two special loop cycles. Therefore, at the cost of introducing stabilized local loop cycles, which lie in $\psi(M(\graf))$ by Lemma \ref{lem:local case}, each $b_j$ may be assumed special. Moreover, fixing an edge $e_0$ lying in the path $\gamma$, we may take $q_j=e_0^{k-1}$ up to homology for each $j$. (and up to non-rigid star cycles which can be ignored by Lemma~\ref{lem:non-rigid in image}).

Let $\sigma$ denote surgery along $\graf'=\cup_{i=2}^m\graphfont{\Delta}_i$ using the vertices $\{x,y\}$. By Proposition \ref{prop:basic surgery}, there is a section $\iota$ such that $(1-\iota_*\sigma_*)([c])$ is a sum of a stabilized star classes, which lie in $\psi(M(\graf))$ by Lemma~\ref{lem:non-rigid in image} (these star classes are necessarily non-rigid by biconnectivity).
By inspection, since each $b_j$ is special, we have \[\iota_*\sigma_*([c])=\sum_{j=1}^se_0^{k-1}\iota_*\sigma_*([b_j])=ne_0^{k-1}[b],\] where $b$ is the loop cycle with support given by the union of $\gamma$ and $\iota(e_{xy})$ (note that we may have $s\neq n$, since each $b_j$ carries an orientation). By $\mathbb{Z}[E]$-linearity and the fact that $\ker(\delta)$ is $(e-e')$-torsion, we conclude that the cycle $ne_0^{k-1}b$ satisfies {\bf K}, whence $n=0$ by Lemma \ref{lem:not pesky}. It follows that $[c]\in \psi(M(\graf))$, as claimed.
\end{proof}

\bibliographystyle{amsalpha}
\bibliography{references}

\end{document}